\title[Local and global existence for the stochastic Euler equations]
{Local and global existence of smooth solutions for the stochastic Euler equations with multiplicative noise \footnote{{\em Date:} \today} }
\author{Nathan E.~Glatt-Holtz}
\author{Vlad C.~Vicol}
\address{Department of Mathematics, Indiana University, 300 Swain East, Bloomington, IN 47405}
\email{negh@indiana.edu}
\address{Department of Mathematics, University of Chicago, 5734 University Avenue, Chicago, IL 60637}
\email{vicol@math.uchicago.edu}
\theoremstyle{plain}
\newtheorem{theorem}{Theorem}[section]
\newtheorem{definition}[theorem]{Definition}
\newtheorem{lemma}[theorem]{Lemma}
\newtheorem{proposition}[theorem]{Proposition}
\theoremstyle{definition}
\newtheorem{remark}[theorem]{Remark}
\renewcommand{\tilde}{\widetilde}
\def\DD{\mathcal D}
\def\RR{\mathbb R}
\def\RSZ{\mathcal R}
\def \UU{\mathfrak{U}}
\def\dW{d\mathcal{W}}
\def\WW{\mathcal{W}}
\def\XX{\mathbb{X}}
\def\WB{\mathbb{W}}
\def\mm{{m'}}
\def\locCon{\beta}
\newcommand{\indFn}[1]{1 \! \! \! 1_{#1}}
\newcommand{\E}{\mathbb{E}}
\newcommand{\Prb}{\mathbb{P}}
\def\curl{\mathop{\rm curl} \nolimits}
\newcommand{\norm}[2]{ \Vert #1 \Vert_{#2}} 
\begin{document}


\begin{abstract}
We establish the local existence of pathwise solutions for the stochastic Euler equations in a three-dimensional bounded domain with slip boundary conditions and a suitable nonlinear multiplicative noise. In the two-dimensional case we obtain the global existence of these solutions with additive or linear-multiplicative noise. Lastly, we show that, in the three dimensional case, the addition of  linear multiplicative noise provides a regularizing effect; the global existence of solutions occurs with high probability if the initial data is sufficiently small, or if the noise coefficient is sufficiently large.
\end{abstract}


\subjclass[2010]{35Q35, 60H15, 76B03}
\keywords{Euler equations, Stochastic partial differential equations on Lebesgue spaces, Compactness methods, Pathwise solutions, Nonlinear multiplicative noise}

\maketitle

\setcounter{tocdepth}{1}
\tableofcontents

\section{Introduction}
\label{sec:intro} \setcounter{equation}{0}

In this paper we address the well-posedness of the stochastic incompressible Euler equations with multiplicative noise, in a smooth bounded simply-connected domain $\DD \subset \RR^d$
\begin{align}
  &du + \left( u \cdot \nabla u + \nabla \pi \right) dt = \sigma(u) \dW,\label{eq:E:1}\\
  &\nabla \cdot u = 0,\label{eq:E:2}
\end{align}
where $d=2$ or $3$, $u$ denotes the velocity vector field, and $\pi$ the pressure scalar field.
Here $\WW$ is a cylindrical Brownian motion
and $\sigma(u) \dW$ can be written formally in the expansion
$\sum_{k \geq 1} \sigma_k(u) dW_k$ where $W_k$ are a collection of $1D$ independent
Brownian motions.
The system \eqref{eq:E:1}--\eqref{eq:E:2}
is supplemented with the classical slip
boundary condition
\begin{align}
  u|_{\partial \DD} \cdot n = 0,
  \label{eq:E:3}
\end{align}
where $n$ denotes the outward unit normal to the boundary $\DD$. Here $\partial \DD$ is taken to be sufficiently smooth. 
In order to emphasize the stochastic effects and for the simplicity of exposition
we do not include a deterministic forcing $f$ in \eqref{eq:E:1}, 
but note that all the results of this paper may be easily 
modified to include this more general case.

The Euler equations are the classical model
for the motion of an inviscid, incompressible, homogenous fluid.
The addition of stochastic terms to the governing equations
is commonly used to account for numerical, empirical, and
physical uncertainties in applications ranging from climatology to turbulence theory.  
In view of the wide usage of stochastics in fluid dynamics, 
there is an essential need to improve the mathematical foundations of the
stochastic partial differential equations of fluid flow, and in particular
to study inviscid models such as the stochastic Euler equations.

Even in the deterministic case, when $d=3$ the global existence and uniqueness of smooth solutions remains
a famously open problem  for the Euler equations, 
and also for their dissipative counterpart, the Navier-Stokes equations. 
There is a vast literature on the mathematical theory for the 
deterministic Euler equations; see for instance 
the books \cite{Chemin1998,MajdaBertozzi2002}, the recent surveys
\cite{BardosTiti2007,Constantin2007}, and references therein.
While the stochastic Navier-Stokes equation has been extensively studied
dating back to the seminal works \cite{BensoussanTemam1972, BensoussanTemam} and
subsequently in e.g. \cite{Viot1,Cruzeiro1,
CapinskiGatarek, Flandoli1, MikuleviciusRozovskii4,
ZabczykDaPrato2, Breckner,
BensoussanFrehse, BrzezniakPeszat,
MikuleviciusRozovskii2,
GlattHoltzZiane2, ConstantinIyer11, DebusscheGlattHoltzTemam1}, rather less has been
written concerning the stochastic Euler equations.  Most of the existing
literature on this subject treats only the two dimensional case, see e.g.
\cite{BessaihFlandoli, Bessaih1999, CapinskiCutland1, BrzezniakPeszat1,
Kim2, CruzeiroFlandoliMalliavin}.
To the best of our knowledge, there are only two works, \cite{MikuleviciusValiukevivius2, Kim1},
which consider the local existence of solutions in {\em dimension three}.
Both of these works consider only an {\em additive noise}, and treat \eqref{eq:E:1}--\eqref{eq:E:2}
on the {\em full space}, avoiding difficulties which naturally arise in the presence
of boundaries, due to the nonlocal nature of the pressure.

In this paper we establish three main results for the system \eqref{eq:E:1}--\eqref{eq:E:3}.
The first result addresses the local existence and uniqueness of
solutions in both two and three dimensions. 
From the probabilistic point of view we study {\em pathwise} solutions, that is probabilistically {\em strong} solutions
where the driving noise and associated filtration is given in advance, as part of the data.
From the PDE standpoint, we consider solutions which evolve continuously in the Sobolev space
$W^{m,p}(\DD)$, for any integer $m > d/p + 1$ and any $p\geq 2$, where $d = 2,3$. 

This local existence result covers a large class of nonlinear multiplicative noise structures in $\sigma( \cdot )$.  
In particular we can handle Nemytskii operators corresponding
to \emph{any} smooth function $g: \RR^{d} \rightarrow \RR^{d}$.  Here, heuristically speaking,
$$
\sigma(u) \dW(t,x) = g(u) \dot{\eta}(t,x),
$$
where $\dot{\eta}(t,x)$ is formally a Gaussian process
with the spatial-temporal correlation structure described by
$
 \E (\dot{\eta}(t,x) \dot{\eta}(s,y)) =\delta_{t-s} K(x,y)
$
for any sufficiently smooth correlation kernel $K$ on $\DD$.  We can also handle
functionals of the solution forced by white noise, and of course the classical cases
of additive and linear multiplicative noise.  See Section~\ref{sec:NoiseExamples} 
below for further details on these examples.

As noted above such results appears to be new in dimension three;  
this seems to be the first work to address (nonlinear) multiplicative noise, 
or to consider the evolution on a bounded domain.
Moreover, our method of proof is quite different from those employed in previous works for a two-dimensional bounded domain. More precisely,
we do not approximate solutions of the Euler system by those to the Navier-Stokes equations subject to Navier boundary conditions, 
and instead construct solutions to the Euler system directly.

In the second part of the paper we address some situations where the global existence of
spatially smooth solutions evolving in $W^{m,p}(\DD)$, with $m > p/d + 1$ can be established.  In the case of an additive noise ($\sigma(u) = \sigma$), when 
$d=2$ we show that the solutions obtained
in the first part of the paper are in fact global in time. 
To the best of our knowledge such results for smooth solutions was only known in the Hilbert space setting,
i.e. where $p=2$;  see \cite{BessaihFlandoli} for a bounded domain and \cite{Kim2,MikuleviciusValiukevivius2} 
where the evolution is considered over the whole space.  

Lastly, we turn to the issue of global existence of smooth pathwise solutions with multiplicative noise, in both $d=2,3$.
Obtaining the global existence of solutions for generic multiplicative noise $\sigma(u) \dW$ seems out of reach in view of some open problems
that already arise in the deterministic setting for $d=2$ (cf.~Remark~\ref{rem:2D:noMultiplicative} below). 
However, in the particular case of a {\em linear multiplicative} stochastic forcing, 
that is when $\sigma(u) \dW = \alpha udW$, where $W$ is a one-dimensional standard Brownian motion, we show
that the noise provides a damping effect on the pathwise behavior of solutions.
In the {\em three-dimensional} case we prove that for any $R \geq 1$:
$$
\Prb( u \textrm{ is global}) \geq 1 - R^{-1/4}, \quad \mbox{ whenever }
\|u_{0}\|_{W^{m,p}(\DD)} \leq \kappa(\alpha^{2}, R), 
$$  
where $\kappa$ is strictly positive and satisfies
$$
	\lim_{\alpha^{2} \to \infty} \kappa(\alpha^{2}, R) = \infty,
$$
 for every fixed $R \geq 1$. This may be viewed as a kind of global existence result in the {\em large noise asymptotic}.
Furthermore, in the {\em two-dimensional} case, we show that solutions are global in time with probability one, 
for {\em any} $\alpha \in \RR$, and independently of the size of the data.
Note that in both cases the linear multiplicative noise allows us to transform \eqref{eq:E:1}--\eqref{eq:E:3}
into an equivalent system for which the presence of an additional damping term becomes  evident. We can exploit
this random damping by using certain estimates for the exit times of geometric Brownian motion, and hence may 
establish the improved pathwise behavior of solutions.  We note that in the deterministic setting
the presence of sufficiently large damping is known to enhance the time of existence of solutions 
(see e.g.~\cite{PaicuVicol2010}), but in order to carry over these ideas to the stochastic setting we need to overcome a series of technical difficulties.

The starting point of our analysis of \eqref{eq:E:1}--\eqref{eq:E:3} is to establish some suitable
a priori estimates in the space $L^{2}(\Omega; L^{\infty}(0,T;W^{m,p}(\DD)))$.  Here obstacles arise both due to the presence of boundaries
and because we have to estimate stochastic integrals taking values in Banach spaces, i.e. $L^p(\DD)$ for $p > 2$. While we handle the convective terms using direct commutator estimates, in order to bound the pressure terms
we need to consider the regularity of solutions to an elliptic Neumann problem.
At first glance this seems to require bounding expressions involving first 
order derivatives of the solution on the boundary, i.e. $((u \cdot \nabla) u) \cdot n$, which would prevent the estimates from closing.
However, by exploiting a geometric insight from \cite{Temam1975}, one may obtain
suitable estimates for the pressure terms in $W^{m,p}(\DD)$. 
In order to treat the stochastic elements of the problem we follow the construction
of stochastic integrals given in e.g. \cite{Krylov1999, MikuleviciusRozovskii2001}.  
Estimates for the resulting stochastic terms 
are more technically demanding than in the Hibert space setting,
and are dealt with by a careful application of the Burkholder-Davis-Gundy inequality.
Note also that we obtain bounds on $u$ in 
$W^{m,p}(\DD)$ only up to a strictly positive stopping time $\tau$.  In contrast to the
deterministic setting, quantitative lower bounds on this $\tau$ are unavailable.  This
leads to further difficulties later in establishing the compactness necessary to pass to the limit 
within a class of approximating solutions of \eqref{eq:E:1}--\eqref{eq:E:3}.

With these a priori estimates in hand, we proceed to the first steps of the rigorous analysis. 
For this purpose, we introduce a Galerkin approximation scheme directly for \eqref{eq:E:1}--\eqref{eq:E:3},
which we use to construct solutions for the Hilbert space setting $p=2$.   We later employ a density and stability 
argument to obtain $W^{m,p}(\DD)$ solutions from the solutions constructed via the Galerkin scheme.
We believe that this Galerkin construction is more natural than in the previous works on the stochastic Euler equations on bounded domains
\cite{BessaihFlandoli,Bessaih1999,CapinskiCutland1,BrzezniakPeszat1},
which use approximations via the Navier-Stokes equations with Navier boundary conditions,
and exploits the vorticity formulation of the equations, a method which is mostly suitable for the two-dimensional case.

As with other nonlinear  SPDEs, we face the essential challenge of establishing sufficient
compactness in order to be able to pass to the limit in the class of Galerkin approximations;
even if a space $\mathcal{X}$ is compactly embedded in another space  $\mathcal{Y}$ it is not usually the case that
$L^{2}(\Omega; \mathcal{X})$ is compactly embedded in $L^{2}(\Omega; \mathcal{Y})$.   As such, the
standard Aubin or Arzel\`a-Ascoli type compactness results, which classically make possible
the passage to the limit in the nonlinear terms, can not be directly applied in this stochastic setting.
With this in mind, we first establish the existence of martingale solutions following the approach 
in e.g. \cite{ZabczykDaPrato1} and see also \cite{FlandoliGatarek1, DebusscheGlattHoltzTemam1}.
Here the main mathematical tools are the Prokhorov theorem, which is used to obtain compactness in the collection of probability measures
associated to the approximate solutions, and the Skorohod embedding theorem, which provides almost sure convergences,
but relative to a new underlying stochastic basis.

At this stage there is another difficulty in comparison to previous works, e.g. \cite{FlandoliGatarek1}, 
which requires us to consider martingale solutions which are {\em very smooth} in $x \in \DD$, i.e. 
which evolve starting from data in 
$H^\mm(\DD)$, with $\mm$ sufficiently large (in particular we may take $m'=m+5$). The reason for this initially non-sharp range for $\mm$ stems 
from the following complication already alluded to above: the a priori estimates hold only up to a stopping time, so that when
we attempt to find uniform estimates the bounds hold only up to a sequence of times $\tau_{n}$,
which may depend on the order $n$ of  the approximation.  In contrast to the deterministic case, it is
not clear how to bound $\tau_{n}$ from below, uniformly in $n$.   
To compensate for this difficulty, we add a smooth cut-off function depending on the
size of $\|u\|_{W^{1,\infty}}$  in front of the nonlinear and noise terms in the Galerkin scheme.
This cut-off function however introduces additional obstacles for inferring uniqueness, 
which in view of the Yamada-Watanabe theorem is crucial for later arguments
that allow us to pass to the case of pathwise solutions. 
For uniqueness, estimates in the $L^2(\DD)$ norm give rise to terms involving the $W^{1,\infty}(\DD)$ norm, 
which prevents one from closing the estimates in the energy space.
On the other hand, if we attempt to prove uniqueness by estimating the difference of solutions
in the $H^\mm$ norm for arbitrary $\mm > d/2 +1$, we encounter problems due to terms which involve an
excessive number of derivatives.  By momentarily restricting ourselves to sufficiently large values of $\mm$, we 
manage to overcome both difficulties.

Having passed to the limit in the Galerkin scheme, we obtain the existence of very smooth solutions to a modified Euler equation with
a cut-off in front of the nonlinearity.   We can therefore \emph{a posteriori} introduce a stopping time
and infer the existence of a martingale solution of \eqref{eq:E:1}--\eqref{eq:E:3}. 
It still remains to deduce the existence of pathwise solutions, that
is solutions of \eqref{eq:E:1}--\eqref{eq:E:3} defined
relative to the initially given stochastic basis $\mathcal{S}$.
For this we are guided by the classical Yamada-Watanabe theorem from finite
dimensional stochastic analysis.  This result tells us that, for 
finite dimensional systems at least, pathwise solutions exist 
whenever martingale solutions may be found and pathwise uniqueness
holds (cf.~\cite{WatanabeYamada1971a,WatanabeYamada1971b}).
More recently a different proof of such results was developed in \cite{GyongyKrylov1}
which leans on an elementary characterization of convergence
in probability (cf.  Lemma~\ref{thm:GyongyKry} below).
Such an approach can sometimes be used for stochastic partial differential
equations, see e.g.~\cite{DebusscheGlattHoltzTemam1} in the context of viscous fluids equations.
Notwithstanding previous applications of Lemma~\ref{thm:GyongyKry}
for the stochastic Navier-Stokes and related systems, 
the inviscid case studied here presents some new challenges, most important of which is the difficulty 
in establishing the {uniqueness} of pathwise solutions.

With a class of pathwise solutions in very smooth spaces in hand, we next apply
a density-stability argument to obtain the existence of solutions evolving in $W^{m,p}(\DD)$
where the ranges for $m$, $p$ are now sharp, i.e. $m > d/p +1$ for any $p \geq 2$.  
Since, for all $m'$ sufficiently large,
$H^{m'}(\DD)$ is densely embedded in $W^{m,p}(\DD)$, 
we may smoothen (mollify) the initial data to obtain 
a sequence of very smooth pathwise approximating solutions $u^n$
which evolve in $H^{m'}(\DD)$.  By estimating these solutions pairwise we are able to show that
they form a Cauchy sequence in $W^{m,p}(\DD)$, up to a strictly positive stopping time.  Since almost sure
control is needed for the individual solutions which each have their own maximal
time of existence, we may use of an abstract lemma from \cite{MikuleviciusRozovskii2, GlattHoltzZiane2}.  
See also \cite{GlattHoltzTemam2} for an application to other SPDEs, 
and \cite{GlattHoltzTemam1} for related results in the deterministic setting.

As above for the uniqueness of solutions, when estimating $u^n - u^m$ we encounter terms
involving $\nabla u^n$ in the $W^{m,p}$ norm (which is finite since $u^{n} \in H^{m'}(\DD)$ and $m'$ is large).  
These terms are dealt with using some properties of 
the mollifier $F_\epsilon$  used to smoothen the initial data (here $\epsilon = 1/n$).
More precisely, the term  $\| \nabla u^n\|_{W^{m,p}}$ is of size $1/\epsilon$, but it is multiplied by
$\| u^{n} - u^{m}\|_{W^{m-1,p}}$, which converges to $0$ when $m\geq n$ and $n\rightarrow \infty$,
{\em even when multiplied by $1/\epsilon = n$}. 
See \cite{KatoLai1984,Masmoudi2007} for related estimates for the deterministic Euler equation.

In the second part of the manuscript we turn to establish some global existence results
for \eqref{eq:E:1}--\eqref{eq:E:2}.  We first study the case of additive noise 
in two spatial dimensions.  
To address the additive case we apply a classical Beale-Kato-Majda type inequality for 
$\|u\|_{W^{1,\infty}}$ (see e.g.~\cite{MajdaBertozzi2002}).  This shows that if we can control the vorticity of the solution in $L^\infty$ uniformly in time, then
the nonlinear terms may be bounded like $\log( \|u\|_{W^{m,p}}) \|u\|_{W^{m,p}}^p$.  As such
our proof relies on suitable estimates for the vorticity $\curl u$ in $L^\infty$, which  in this additive case can be achieved 
via a classical change of variables, and 
by establishing a suitable stochastic analogue of a logarithmic Gr\"onwall lemma.

The case of linear multiplicative noise is more interesting.  As noted above, such noise structures
evidence a pathwise damping of the solutions of \eqref{eq:E:1}--\eqref{eq:E:2}, which may be seen by analyzing the transformed
system \eqref{eq:Notes:21}--\eqref{eq:Notes:21b} for a new variable $v(t) = u(t) \exp(- \alpha W_{t})$.    In order to take
advantage of this damping in the three dimensional case, we need to carefully show
that the vortex stretching term is suitably controlled by the damping terms coming from the noise.    For 
a sufficiently large noise coefficient $\alpha$ (or equivalently, for a sufficiently small initial condition) we see that the vorticity
must be decaying, at least for some initial period during which $\|u\|_{W^{1,\infty}}$ remains below
a certain threshold value.  Via the usage of the Beale-Kato-Majda inequality we see in turn 
that the growth on $\|u\|_{W^{m,p}}$ is limited by the possible growth of a certain geometric Brownian motion during this initial period.   
We are therefore able to show that if $\|u_0\|_{W^{m,p}}$ is sufficiently small with respect to a function of $\alpha$
and a given $R> 0$ then, on the event that the geometric Brownian motion never grows to be larger than $R$, the quantity
 $\|u\|_{W^{m,p}}$ will remain below a certain bound. In turn, this guarantees that the quantity $\|u\|_{W^{1,\infty}}$
 will in fact never reach the 
critical value that would prevent the decay in vorticity, and we conclude that the solution is in fact global 
in time on this event that the geometric Brownian motion always stays below the value $R$.
Since we are able to derive probabilistic bounds on this event, which crucially are independent of $\alpha$, we obtain
the desired results.

The manuscript is organized as follows.  In Section~\ref{sec:prelim} we review some mathematical
background, deterministic and stochastic, needed throughout the rest of the work. 
We then make precise the conditions that we need to impose on the noise through $\sigma$
in Section~\ref{sec:NoiseConds}.  We conclude this section with a detailed discussion of some
examples of nonlinear noise structures covered under the given abstract conditions on $\sigma$.
Section~\ref{sec:results} contains the precise definitions of solutions to \eqref{eq:E:1}--\eqref{eq:E:3}, along with statements of our
main results.  We next carry out some a priori estimates in Section~\ref{sec:apriori}. 
In Section~\ref{sec:compact} we introduce the Galerkin scheme and establish the existence of very smooth solutions.  In
Section~\ref{sec:WmpSolutionsCons} we establish the existence of solutions in the optimal
spaces $W^{m,p}$ for any $m > d/p + 1$.  The final two Sections~\ref{sec:2dglobal} and~\ref{sec:GlobalExistenceLinMultNoise} are devoted to proofs of the global existence results for the cases of additive and linear multiplicative
noises respectively.  Appendices gather various additional technical tools used throughout the body of the paper.

\section{Preliminaries}
\label{sec:prelim} \setcounter{equation}{0}

Here we recall some deterministic and stochastic ingredients which will be used throughout this paper.

\subsection{Deterministic Background}
\label{sec:sec:deterministic}
We begin by defining the main function spaces used throughout
the work.  For each integer $m \geq 0$ and $p \geq 2$ we let  
    \begin{align}
    X_{m,p} &= \left\{ v \in (W^{m,p}(\DD))^d \colon \nabla \cdot v = 0, \ v|_{\partial \DD} \cdot n = 0 \right\}
    \label{eq:Xmp}
  \end{align}
  and for simplicity write $X_{m} = X_{m,2}$ (see also \cite{Temam1975}).
These spaces are endowed with the usual Sobolev norm of order $m$
  \begin{align*}
    \| v\|_{W^{m,p}(\DD)}^p :=  \sum_{ | \alpha| \leq m} \| \partial^\alpha v \|_{L^p(\DD)}^p.
  \end{align*}
  As usual, the norm on $X_m$ is denoted by $\Vert \cdot \Vert_{H^m}$. We make the convention 
  to  write $\| \cdot \|_{W^{m,p}}$ and $\|\cdot\|_{H^m}$ instead of $\| \cdot \|_{W^{m,p}(\DD)}$ and $\|\cdot\|_{H^m(\DD)}$, 
{\em unless}  Sobolev spaces on $\partial \DD$ are considered.
  We let $(\cdot,\cdot)$ denote the usual $L^{2}(\DD)$ inner product, which
  makes $X_{0} \subset L^{2}(\DD)$ a Hilbert space. The inner product on
  $X_{m}$ shall be denoted by
  $(\cdot,\cdot)_{H^m} = \sum_{|\alpha|\leq m} (\partial^{\alpha} \cdot, \partial^{\alpha} \cdot)$.

Throughout the analysis we shall make frequent use of certain classical ``calculus inequalities'' which can be
established directly from the Leibniz rule and the Gagliardo-Nirenberg inequalities.
Whenever $m > d/p$ we have the Moser estimate
\begin{align}
	\|uv\|_{W^{m,p}} \leq C ( \|u\|_{L^{\infty}} \|v\|_{W^{m,p}} +
	\|v\|_{L^{\infty}} \|u\|_{W^{m,p}}), 
	\label{eq:CalcInequality}
\end{align}
for all $u,v \in W^{m,p}(\DD)$ and some universal constant $C = C(m, p, \DD) > 0$.
Note that in particular this shows that $W^{m,p}$ is an algebra whenever $m > d/p$.
The following commutator estimate 
will also be used frequently
\begin{align}
\label{eq:commutator}
    \sum_{0\leq  |\alpha | \leq m}
       \| \partial^{\alpha} (u \cdot \nabla v) - u \cdot \nabla \partial^{\alpha} v \|_{L^{p}}
     \leq C \left( \| u \|_{W^{m,p}} \| \nabla v \|_{L^{\infty}} +
                 \| \nabla u \|_{L^{\infty}} \| v \|_{W^{m,p}} \right)
\end{align}
for some constant $C = C(m,p,\DD)>0$, where $ m > 1 + d/p,$ $u \in W^{m, p}$, and $v
\in W^{m+1,p}$.
Note that for what follows we shall assume that $m> 1 + d/p$ and $p \geq 2$, where $d = 2, 3$ is the dimension of $\DD$, 
allowing us to apply \eqref{eq:CalcInequality} and \eqref{eq:commutator}.

In order to treat the pressure term appearing in the Euler equations, 
we will need to bound the solutions of an elliptic Neumann problem taking the form:
\begin{align}
	- \Delta \pi &= f,  \textrm{ in } \DD,
	\label{eq:EllipticNeumann1}\\
	\frac{ \partial \pi} {\partial n}  &= g,  \textrm{ on } \partial \DD,
	\label{eq:EllipticNeumann2}
\end{align}
for given $f$ and $g$, sufficiently smooth.  For this purpose we recall
the result in \cite{AgmonDouglisNirenberg1} which gives the bound:
\begin{align} \label{eq:Agmon}
	\| \nabla \pi \|_{W^{m,p}(\DD)}  \leq
		C ( \| f \|_{W^{m-1,p}(\DD)}
		       + \| g \|_{W^{m-1/p,p}(\partial \DD)} )
\end{align}
where $C= C(m, p, \DD) > 0$ is a universal constant. In fact, \eqref{eq:Agmon} is usually combined with 
the bound given by the trace theorem: $\|  h_{| \partial \DD}\|_{W^{m-1/p,p}(\partial\DD)} \leq C \|h \|_{W^{m,p}(\DD)}$,
which holds for sufficiently smooth $h$, integers $m\geq 1$, and $p\geq 2$ (cf.~\cite{AdamsFournier}).

Also in relation to the pressure we consider $P$, the so-called Leray projector, to be the orthogonal projection in $L^{2}(\DD)$ onto the
closed subspace $X_{0}$.  Equivalently, for any $v \in L^2(\DD)$ we have $Pv = (1 - Q) v$ where 
$$
Qv = - \nabla \pi
$$ 
for any $\pi \in H^{1}(\DD)$ which solves the elliptic Neumann problem
\begin{align}
- \Delta \pi  &= \nabla \cdot v, \qquad \mbox{ in }\ \DD, \label{eq:Leray:1}\\
 \frac{\partial \pi}{\partial n} &= v \cdot n, \qquad \mbox{ on }\ \partial \DD. \label{eq:Leray:2}
\end{align}
Moreover, for $v \in W^{m,p}$, observe that $\nabla \cdot v \in W^{m-1,p}(\DD)$ and $v|_{\partial \DD}\cdot n \in W^{m-1/p,p}(\partial \DD)$.
Hence, by applying \eqref{eq:Agmon} and the trace theorem to \eqref{eq:Leray:1}--\eqref{eq:Leray:2}, we infer that
\begin{align}
\| P v \|_{W^{m,p}(\DD)} \leq C \| v \|_{W^{m,p}(\DD)}
\label{eq:P:bounded}
\end{align}
for any $v \in W^{m,p}(\DD)$. Thus $P$ is also a bounded linear operator from $W^{m,p}(\DD)$ into $X_{m,p}$.

We conclude this section with some bounds on the nonlinear terms which involve the Leray projector.  These bounds will be used throughout the 
rest of the work.
\begin{lemma}[\bf Bounds on the nonlinear term]
\label{lemma:Leray}
Let $m> d/p +1$, and $p\geq 2$. The following hold:
\begin{itemize}
\item[(a)] If $u \in W^{m,p}$ and $v \in W^{m+1,p}$ then $P(u\cdot \nabla v) \in X_{m,p}$, and 
\begin{align}
\| P (u \cdot \nabla v) \|_{W^{m,p}} & \leq C \left( \| u\|_{L^\infty} \| v\|_{W^{m+1,p}} + \| u \|_{W^{m,p}} \|v\|_{W^{1,\infty}} \right).	
\label{eq:P:product}
\end{align}
\item[(b)] If $u,v \in X_{m,p}$, then $Q (u \cdot \nabla v) \in W^{m,p}(\DD)$ and 
\begin{align}
\| Q( u \cdot \nabla v) \|_{W^{m,p}} & \leq  C \left( \| u\|_{W^{1,\infty}} \| v\|_{W^{m,p}} + \| u \|_{W^{m,p}} \|v\|_{W^{1,\infty}} \right).
\label{eq:Q:product}
\end{align}
\item[(c)] If $u \in X_{m,p}$ and $v \in X_{m+1,p}$ then
\begin{align}
& \left| \sum_{|\alpha| \leq m} (\partial^\alpha P( u \cdot \nabla v), \partial^\alpha v |\partial^\alpha v|^{p-2}) \right|
 \leq C \left( \| u\|_{W^{1,\infty}} \|v\|_{W^{m,p}} + 	\| u \|_{W^{m,p}} \| v\|_{W^{1,\infty}} \right) \|v \|_{W^{m,p}}^{p-1}.
\label{eq:P:commutator}
\end{align}
\end{itemize}
In \eqref{eq:P:product}--\eqref{eq:P:commutator}, $C = C(m,p,\DD)$ is positive universal constant.
\end{lemma}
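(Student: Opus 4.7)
The plan is to prove the three parts in order, each leveraging the previous ingredients together with the Neumann elliptic theory recalled above.

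For part (a), the idea is to use the boundedness of the Leray projector given by \eqref{eq:P:bounded}. Specifically, since $P$ extends to a bounded operator on $W^{m,p}$, we have $\|P(u\cdot\nabla v)\|_{W^{m,p}} \leq C \|u\cdot\nabla v\|_{W^{m,p}}$, after which a direct application of the Moser estimate \eqref{eq:CalcInequality} to the product $u\cdot\nabla v$ (with ``$v$'' there being $\nabla v$) yields \eqref{eq:P:product}.

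Part (b) is where the main obstacle lies and where the geometric observation from \cite{Temam1975} is essential. Writing $Q(u\cdot\nabla v) = -\nabla\pi$, the function $\pi$ solves the Neumann problem \eqref{eq:Leray:1}--\eqref{eq:Leray:2} with data $f = \nabla\cdot(u\cdot\nabla v)$ and $g = (u\cdot\nabla v)\cdot n$. The interior term simplifies via $\nabla\cdot v = 0$ to $f = \sum_{i,j}(\partial_i u_j)(\partial_j v_i)$, so that $f$ involves only \emph{first} derivatives of $u$ and $v$; the Moser inequality \eqref{eq:CalcInequality} then gives $\|f\|_{W^{m-1,p}} \leq C(\|u\|_{W^{1,\infty}}\|v\|_{W^{m,p}} + \|u\|_{W^{m,p}}\|v\|_{W^{1,\infty}})$. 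For the boundary datum $g$, naively one would need one extra derivative on $v$; the key trick is that \emph{both} $u$ and $v$ satisfy the slip condition. Extending $n$ smoothly into $\DD$ and using $v\cdot n = 0$ on $\partial\DD$, I rewrite
\[
(u\cdot\nabla v)\cdot n = u\cdot\nabla(v\cdot n) - v\cdot\bigl((u\cdot\nabla) n\bigr) \qquad \text{on } \partial\DD,
\]
and the first term vanishes since $v\cdot n\equiv 0$ on $\partial\DD$ and $u$ is tangential there. Hence $g = -v\cdot((u\cdot\nabla)n)$ on $\partial\DD$ involves no derivatives of $u,v$ at all; the Moser estimate together with the trace theorem then bounds $\|g\|_{W^{m-1/p,p}(\partial\DD)}$ by the right-hand side of \eqref{eq:Q:product}. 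Inserting these two bounds into \eqref{eq:Agmon} completes the proof.

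For part (c) I decompose $P(u\cdot\nabla v) = u\cdot\nabla v - Q(u\cdot\nabla v)$ and handle the two resulting contributions separately. The $Q$ piece is estimated by H\"older's inequality and part (b), yielding the stated right-hand side up to a factor $\|v\|_{W^{m,p}}^{p-1}$. For the $u\cdot\nabla v$ piece, I split $\partial^{\alpha}(u\cdot\nabla v) = [\partial^{\alpha},u\cdot\nabla]v + u\cdot\nabla\partial^{\alpha} v$. The commutator term is bounded in $L^{p}$ by \eqref{eq:commutator}, then paired with $\partial^{\alpha}v|\partial^{\alpha}v|^{p-2}$ via H\"older. The remaining transport-type term is the crucial cancellation:
\[
\bigl(u\cdot\nabla\partial^{\alpha}v,\, \partial^{\alpha}v\,|\partial^{\alpha}v|^{p-2}\bigr) = \frac{1}{p}\int_{\DD} u\cdot\nabla |\partial^{\alpha}v|^{p}\,dx = -\frac{1}{p}\int_{\DD}(\nabla\cdot u)|\partial^{\alpha}v|^{p}\,dx + \frac{1}{p}\int_{\partial\DD}(u\cdot n)|\partial^{\alpha}v|^{p}\,dS = 0,
\]
since $u\in X_{m,p}$ ensures both $\nabla\cdot u = 0$ in $\DD$ and $u\cdot n=0$ on $\partial\DD$. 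Collecting the bounds yields \eqref{eq:P:commutator}. The only truly delicate step is the Temam-type rewriting of the boundary trace in part (b); everything else is calculus-inequality bookkeeping.
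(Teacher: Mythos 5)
Your proof is correct and follows essentially the same route as the paper: part (a) via \eqref{eq:P:bounded} and the Moser estimate, part (b) via the Temam rewriting of both the interior source ($\partial_i u_j\,\partial_j v_i$) and the boundary datum combined with \eqref{eq:Agmon} and the trace theorem, and part (c) via the decomposition $P = I - Q$, the commutator estimate \eqref{eq:commutator}, and the transport cancellation. The only substantive difference is that you derive the Temam boundary identity $(u\cdot\nabla v)\cdot n = -v\cdot\bigl((u\cdot\nabla)n\bigr)$ on $\partial\DD$ from the tangency of $u$ and $v$ explicitly, whereas the paper simply quotes this fact from~\cite{Temam1975} in the form $u_i v_j \phi_{ij}$; your version is a welcome filling-in of that step, not a change of method.
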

\begin{proof}[Proof of Lemma~\ref{lemma:Leray}]
Firstly we observe that if $u \in W^{m,p}$ and $v \in W^{m+1,p}$ then by \eqref{eq:CalcInequality} we have $ u \cdot \nabla v \in W^{m,p}$ and $\|u \cdot \nabla v\|_{W^{m,p}}$ 
is bounded by the right side of \eqref{eq:P:product}. Thus (a) follows from \eqref{eq:P:bounded}.

The proof of item (b) is due to~\cite{Temam1975}. If $u$ and $v$ are divergence free, and satisfy the 
non-penetrating boundary condition (which occurs when $u, v \in X_{m,p}$) then boundary term $( u\cdot \nabla v) \cdot n$ may be re-written as $u_i v_j \phi_{ij}$, 
for some smooth functions $\phi_{ij}$, independent of $u,v$ which parametrize $\partial \DD$ in a suitable way. 
Also, again due to the divergence free condition, $\nabla \cdot (u \cdot \nabla v)$ may 
be re-written as $\partial_i u_j \partial_j v_i$.  Hence, neither the boundary condition nor the force have too many derivatives and 
the elliptic Neumann problem one has to solve for the function $\pi$ such that $Q (u \cdot \nabla v) = - \nabla \pi$ becomes
\begin{align*}
 -\Delta \pi &= \partial_i u_j \partial_j v_i \\
 \frac{\partial \pi}{\partial n} & = u_i v_j \phi_{ij}.
\end{align*}
The proof of (b) now follows by applying estimate \eqref{eq:Agmon} to the above system, using the trace theorem and finally 
\eqref{eq:CalcInequality}.

Lastly, in order to prove (c) one uses the cancellation property 
$
( u \cdot \nabla v , v |v|^{p-2}) = 0,
$
the definition of $P$, the bound \eqref{eq:commutator}, the H\"older inequality, and item (b) to obtain
\begin{align*}
 \left| \sum_{|\alpha| \leq m} (\partial^\alpha P( u \cdot \nabla v), \partial^\alpha v |\partial^\alpha v|^{p-2}) \right|
&\leq  \sum_{|\alpha| \leq m} \left|  (\partial^\alpha ( u \cdot \nabla v), \partial^\alpha v |\partial^\alpha v|^{p-2}) \right| +   \sum_{|\alpha| \leq m} \left|(\partial^\alpha Q( u \cdot \nabla v), \partial^\alpha v |\partial^\alpha v|^{p-2}) \right| \\
&\leq C \left( \sum_{|\alpha| \leq m} \| \partial^{\alpha}(u\cdot \nabla v) - u \cdot \nabla \partial^\alpha v\|_{L^p}  + \| Q (u\cdot \nabla v) \|_{W^{m,p}} \right) \| v\|_{W^{m,p}}^{p-1}\\
&\leq C \left( \| u\|_{W^{1,\infty}} \|v\|_{W^{m,p}} + 	\| u \|_{W^{m,p}}  \| v\|_{W^{1,\infty}} \right)\| v\|_{W^{m,p}}^{p-1},
\end{align*}
concluding the proof of item (c).
\end{proof}

\subsection{Background on Stochastic Analysis}
\label{sec:StochAnal}
We next briefly recall some aspects of the theory of
the infinite dimensional stochastic analysis which we use below.   We 
refer the reader to  \cite{ZabczykDaPrato1}
for an extended treatment of this subject.
For this purpose we start by fixing a stochastic
basis $\mathcal{S} := (\Omega, \mathcal{F}, \Prb,$ $
\{\mathcal{F}_t\}_{t \geq 0}, \WW)$.  Here  $(\Omega, \mathcal{F}, \Prb)$
is a complete probability space, and $\WW$ is a cylindrical
Brownian motion defined on an auxiliary Hilbert space
$\UU$ which is adapted to a complete, right continuous
filtration $\{\mathcal{F}_t\}_{t \geq 0}$.
By picking a complete orthonormal basis $\{e_k\}_{k \geq 1}$
for $\UU$, $\WW$ may be written as the formal sum
$\WW(t,\omega) = \sum_{k \geq 1} e_k W_k(t, \omega)$ where
the elements $W_k$ are a sequence of independent $1D$ standard
Brownian motions.
Note that $\WW(t,\omega) = \sum_{k \geq 1} e_k W_k(t, \omega)$
does not actually converge on $\UU$ and so we will sometimes
consider a larger space $\mathfrak{U}_0 \supset \mathfrak{U}$
we define according to
\begin{displaymath}
  \mathfrak{U}_0
  := \left\{ v = \sum_{k \geq 0} \alpha_{k} e_{k} :
  \sum_k \frac{\alpha^{2}_{k}}{k^2} < \infty \right\},
\end{displaymath}
and endow this family with the norm
  $\| v \|_{\mathfrak{U}_{0}}^{2} := \sum_{k} \alpha^{2}_{k} k^{-2}$, for any $v = \sum_k \alpha_{k} e_{k}$.
Observe that the embedding of $\mathfrak{U} \subset \mathfrak{U}_0$
is Hilbert-Schmidt. Moreover, using standard martingale arguments
with the fact that each $W_{k}$ is almost surely continuous
 we have that,
  $\WW \in C([0,\infty), \mathfrak{U}_0)$, almost surely.
See \cite{ZabczykDaPrato1}.

Consider now another separable Hilbert space $X$.  We denote
the collection of Hilbert-Schmidt operators, 
the set of all bounded operators $G$ from $\mathfrak{U}$ to $X$ such that
$\|G\|_{L_{2}(\mathfrak{U}, X)}^{2} := \sum_{k} |G e_{k}|^{2}_{X} < \infty$,
by $L_{2}(\mathfrak{U},X)$.
Whenever $X = \RR$, i.e. in the case where $G$ is a linear functional,
we will denote $L_{2}(\mathfrak{U},\RR)$ by simply $L_{2}$.
Given an $X$ valued predictable\footnote{Let
$\Phi = \Omega \times [0,\infty)$ and take
$\mathcal{G}$ to be the $\sigma$-algebra generated by sets of the
form
\begin{displaymath}
    (s,t] \times F, \quad 0 \leq s< t< \infty, F \in \mathcal{F}_s;
    \quad \quad
    \{0\} \times F, \quad F \in \mathcal{F}_0.
\end{displaymath}
Recall that a $X$ valued process $U$ is called predictable (with
respect to the stochastic basis $\mathcal{S}$) if it is measurable
from $(\Phi,\mathcal{G})$ into $(X, \mathcal{B}(X))$,
$\mathcal{B}(X)$ being the family of Borel sets of $X$.}
process
$G \in L^{2}(\Omega; L^{2}_{loc}
([0, \infty),L_{2}(\UU, X)))$ and taking
$G_k = G e_k$  one may define the
(It\={o}) stochastic integral
\begin{equation}\label{eq:StocIntDef}
   M_{t} := \int_{0}^{t} G \dW = \sum_k \int_0^t G_k dW_k,
\end{equation}
as an element in $\mathcal{M}^2_X$, that is the space of all
$X$ valued square integrable martingales.   If 
we merely assume that the predictable process
$G \in L^{2}_{loc} ([0, \infty),L_{2}(\UU, X))$ almost surely,
i.e. without any moment condition,
then $M_{t}$ can still be defined as in \eqref{eq:StocIntDef} by
a suitable localization procedure.  Detailed constructions 
in both cases may be found in e.g.
\cite{ZabczykDaPrato1} or \cite{PrevotRockner}.

The process $\{M_t \}_{t \geq 0}$ has
many desirable properties.  Most notably for the analysis here,
the Burkholder-Davis-Gundy inequality holds which in the present
context takes the form,
\begin{equation}\label{eq:BDG}
\begin{split}
 \E \left(\sup_{t \in [0,T]} \left| \int_0^t G \dW  \right|_X^r \right)
 \leq C \E \left(
   \int_0^T |G|_{L_2(\UU, X)}^2 dt \right)^{r/2},
      \end{split}
\end{equation}
valid for any $r \geq  1$, and where $C$ is an absolute constant depending only on $r$.
In the coordinate basis $\{e_{k}\}$, \eqref{eq:BDG} takes the form
\begin{align*}
     \E \left(\sup_{t \in [0,T]} \left|  \sum_k \int_0^t G_k dW_k  \right|_X^r \right)
 \leq C \E \left(
   \int_0^T \sum_k |G_k|_{X}^2 dt \right)^{r/2}.
\end{align*}

Since we consider solutions of \eqref{eq:E:1}-- \eqref{eq:E:3} evolving in $X_{m,p}$ for
any $p \geq 2$ and $m > d/p +1$, we will recall some details of the construction
of stochastic integrals evolving on $W^{m,p}(\DD)$.  Here we use the approach of~\cite{Krylov1999, MikuleviciusRozovskii2001}, to which
we refer the reader for further details.  See
also \cite{Neihardt1978, Brzezniak1995} and containing references
for a different, more abstract approach to stochastic integration in the Banach space setting. 
Suppose that $p \geq 2$, $m \geq 0$, define
\begin{align*}
  \mathbb{W}^{m,p} =
  \left\{ \sigma: \DD \rightarrow L_{2}:
  	\sigma_{k}(\cdot) = \sigma(\cdot) e_{k} \in W^{m,p}
	\textrm{ and }
	\sum_{|\alpha| \leq m} \int_{\DD} | \partial^{\alpha} \sigma|_{L_{2}}^{p} dx < \infty
	\right\},
\end{align*}
which is a Banach space according to the norm
\begin{align}
   \|\sigma\|_{\mathbb{W}^{m,p}}^{p}
:= \sum_{|\alpha| \leq m} \int_{\DD} | \partial^{\alpha} \sigma|_{L_{2}}^{p} dx
= \sum_{|\alpha| \leq m} \int_{\DD} \left( \sum_{k \geq 1} |\partial^{\alpha} \sigma_{k}|^{2} \right)^{p/2} dx.
\label{eq:LpHSnormAna}
\end{align}
Let $P$ be the Leray projection operator defined in Section~\ref{sec:sec:deterministic}.  
For $\sigma \in \mathbb{W}^{m,p}$ we define $P\sigma$ as an element
in $\mathbb{W}^{m,p}$ by taking $(P\sigma) e_k = P (\sigma e_k)$ so that 
$P$ is a linear continuous operator on $\mathbb{W}^{m,p}$. We take
$$\XX_{m,p} = P \mathbb{W}^{m,p} = \{ P\sigma: \sigma \in \mathbb{W}^{m,p}\}.$$
Note that $\XX_{m,2} = L_2(\mathfrak{U}, X_m)$ and in accordance with \eqref{eq:Xmp},
we will denote $\XX_{m,2}$ by simply $\XX_m$.

Consider any predictable process $G \in
L^p(\Omega; L^p_{loc}([0,\infty), \mathbb{X}_{m,p})$.   For such a $G$
we have, for any $T > 0$ and almost every $x \in \DD$, that
$
  \E \int_0^T \sum_{|\alpha| \leq m} | \partial^{\alpha} G(x) |_{L_{2}}^2 dt < \infty.
$
We thus obtain from the Hilbert space theory introduced above that
$M_{t}$ as in \eqref{eq:StocIntDef} is well defined for almost every $x \in \DD$ 
as a real valued martingale 
and that for each $|\alpha| \leq m$,
$
 \partial^{\alpha} M_t(x) = \int_0^t \partial^{\alpha}G(x) \dW
$.
  By applying the Burkholder-Davis-Gundy
inequality, \eqref{eq:BDG} we have that
\begin{displaymath}
\begin{split}
  \E \sup_{t \in [0,T]} \| M_{t}\|^{p}_{W^{m,p}} 
 \leq C\sum_{|\alpha| \leq m}  \int_{\DD} \E \left( \int_0^T   | \partial^{\alpha}G(x)|_{L_{2}}^2 dt \right)^{p/2}  dx
    \leq C \E \int_0^T |G|_{\mathbb{X}_{m,p}}^p dt.
\end{split}    
\end{displaymath}
Lastly, cf.~\cite{Krylov1999,MikuleviciusRozovskii2001} one may show that $M_t \in L^p(\Omega;C([0,\infty); X_{m,p}))$
and is an $X_{m,p}$ valued martingale.

\section{Nonlinear multiplicative noise structures and examples}
\label{sec:NoiseConds}
\setcounter{equation}{0}
 
In this section we make precise the conditions that we impose on the noise.  While, in abstract form, these conditions 
appear to be rather involved, they fact cover a very wide class of physically
realistic nonlinear stochastic regimes.  We conclude this section by detailing some of these 
examples.

\subsection{Abstract conditions}
We next describe, in abstract terms, the conditions imposed for $\sigma$. 
Consider any pair of Banach spaces $\mathcal{X}$, $\mathcal{Y}$
with $\mathcal{X} \subset L^\infty(\DD)$. We denote the space of locally bounded maps
\begin{align*}
  {\rm Bnd_{u,loc}}(\mathcal{X}, \mathcal{Y}) := 
  \Big\{ 
   \sigma \in C(\mathcal{X} \times [0, \infty); \mathcal{Y}) \colon
   \| \sigma(x,t) \|_{\mathcal{Y}} \leq \locCon(\|x\|_{L^\infty}) (1 + \|x\|_{\mathcal{X}}), \forall x     \in \mathcal{X}, t \geq 0 
   \Big\}
\end{align*}
where $\locCon(\cdot) \geq 1$ is an increasing function which is locally bounded and is independent of $t$.  In  addition we define the space of locally Lipschitz functions,
\begin{align*}
{\rm Lip_{u,loc}}(\mathcal{X}, \mathcal{Y}) =   \Big\{
   \sigma \in {\rm Bnd_{u,loc}}(\mathcal{X},\mathcal{Y}) \colon
   \| \sigma(x,t) - \sigma(y, t)\|_{\mathcal{Y}} \leq 
   \locCon(\|x\|_{L^\infty} + \|y\|_{L^\infty}) \|x - y \|_{\mathcal{X}},
   \forall x, y \in \mathcal{X}, t \geq 0
   \Big\}.
\end{align*}
Note that in in both cases the subscript {\rm u} is intended to emphasize
the that increasing function $\locCon$ appearing in the above inequalities
may be taken to be independent of $t \in [0,\infty)$.  Note furthermore 
that, by considering such locally Lipschitz spaces of functions, 
we are able to cover stochastic forcing involving Nemytskii operators,
i.e. smooth functions of the solutions multiplied by spatially correlated white in time Gaussian noise (see Section~\ref{sec:NoiseExamples} below).

For the main local existence results in the work, Theorem~\ref{thm:LocalExistence} below, we fix $p\geq 2$ and an integer $m > d/p + 1$, and suppose that
\begin{align}
  \sigma \in  {\rm Lip_{u,loc}}(L^{p}, \mathbb{W}^{0,p}) \cap {\rm Lip_{u,loc}}(W^{m-1,p}, \mathbb{W}^{m-1,p}) \cap {\rm Lip_{u,loc}}(W^{m,p}, \mathbb{W}^{m,p})
  \label{eq:sigMappings}.
\end{align}
Since $P$ is a continuous linear operator on $\mathbb{W}^{k,p}$, for $k \geq 0$ it follows 
that
$P\sigma \in {\rm Lip_{u,loc}}(W^{k,p}, \mathbb{X}_{k,p})$, for $k=  m-1,m$.
Observe that by \eqref{eq:sigMappings} we have that $\int_{0}^{t} P \sigma(u)\dW \in C([0,\infty);X_{m,p})$
for each predictable process $u \in C([0,\infty);X_{m,p})$.

We will also impose some additional technical conditions on $\sigma$ which are required for the proof of local existence of solutions (cf. Theorem~\ref{thm:LocalExistence}
below).  These conditions do no preclude any of the examples we give below.     
Firstly we suppose that
\begin{align}
\sigma \in {\rm Bnd_{u,loc}}(W^{m+1,p}, \WB^{m+1,p}).
\label{eq:ExtraDerivativeAssumptforDensityStability}
\end{align}
Fix some $\mm$ sufficiently large, such that $H^{\mm-2} \subset W^{m+1,p}$, e.g. $\mm> m + 3 + d (p-2)/(2p)$ by the Sobolev embedding. For simplicity we take an $\mm$ which works for all $p\geq 2$, and for the rest of this paper fix 
$$
\mm = m+5.
$$
We assume that
\begin{align}
 \sigma \in {\rm Bnd_{u,loc}}(H^{\mm}, \WB^{\mm,2}).
 \label{eq:BadNoiseAssumption} 
\end{align}
Condition \eqref{eq:ExtraDerivativeAssumptforDensityStability} is used for the 
density and stability arguments in Section~\ref{sec:WmpSolutionsCons}, while
condition \eqref{eq:BadNoiseAssumption} seems necessary in order to justify the construction of solutions to the Galerkin system (cf.~Section~\ref{sec:UniformGalerkinEstimates} below).

In the case of an additive noise when we assume that 
$\sigma$ is independent of $u$ (cf. Theorem~\ref{thm:GlobalExistence2DAdditive}), we may alternatively assume that:
\begin{align}
\sigma \in L^{p}(\Omega, L^{p}_{loc}([0,\infty);\WB^{m+1,p}))
\label{eq:fuckedAdditiveNoise}
\end{align}
and that $\sigma$ is predictable. Note that while \eqref{eq:sigMappings}--\eqref{eq:BadNoiseAssumption} covers many additive noise structures,
\eqref{eq:fuckedAdditiveNoise} is less restrictive and allows for $\omega \in \Omega$ dependence in $\sigma$.

\subsection{Examples} 
\label{sec:NoiseExamples}

We now describe some examples of stochastic forcing structures
for $\sigma(u) \dW$ covered under the conditions \eqref{eq:sigMappings}
--\eqref{eq:BadNoiseAssumption} imposed above, or alternatively \eqref{eq:fuckedAdditiveNoise} for additive noise.

\subsection*{Nemytskii operators}

One important example is stochastic forcing of a smooth function of the 
solution.  Suppose that $g: \RR^{d} \rightarrow \RR^{d}$ is $C^{\infty}$ smooth and consider
$\alpha \in \WB^{\mm,2}$, where as above $\mm = m + 5$.  We then take
\begin{align}
  \sigma_{k}(u) = \alpha_{k}(x) g(u), \quad k \geq 1.
  \label{eq:NemytskiiSetup}
\end{align}
In this case we have that:
$$
\sigma(u)\dW = \sum_{k \geq 1} \alpha_{k}(x) g(u)dW_{k} = g(u)  \sum_{k\geq 1} \alpha_{k}(x) dW_{k}
	               = g(u) \alpha \dW.
$$
Note that $\alpha\dW$ is formally a Gaussian process with the spatial-temporal correlation structure
\begin{align*}
	\E (\alpha \dW(x, t) \alpha \dW(y,s)) = K(x,y) \delta_{t -s}
	\quad \textrm{ for all } x, y \in \RR^d, t, s \geq 0,
\end{align*}
with $K(x,y) = \sum_{k\geq 1} \alpha_k(x) \alpha_k(y)$.
Observe that if $g(u) \in W^{n,q}$ for $q \geq 2$ and $n \geq d/q$ then
$$
	\|\sigma(u) - \sigma(v)\|_{\WB^{n,q}}^{q} := \sum_{|\alpha|\leq n} \int_{\DD} \left( \sum_{k \geq 1} | \partial^{\alpha} (\alpha_{k}  g(u)- \alpha_k g(v))|^{2}\right)^{q/2}dx	
	       \leq C \| \alpha \|_{\WB^{n,q}}^q \|g(u) - g(v)\|_{W^{n,q}}^{q}.
$$
We may therefore show that \eqref{eq:NemytskiiSetup} satisfies 
\eqref{eq:sigMappings}--\eqref{eq:BadNoiseAssumption} by making use of
the following general fact about the composition of functions.  
\begin{lemma}[\bf Locally Lipschitz and bounded] \label{lemma:compose}
Fix any $n > d/p$ with $p \geq 2$. 
Suppose that $g: \RR^{d} \rightarrow \RR^{d}$ and that $g \in W^{n+1,\infty}(\RR^{d})$.
Then 
\begin{align}
	\|g(u)- g(v)\|_{W^{n,p}(\DD)} \leq \locCon(\|u\|_{L^\infty} + \|u\|_{L^\infty}) \|u-v\|_{W^{n,p}(\DD)}
	\quad \textrm{ for every } u,v \in W^{n,p}(\DD).
	\label{eq:Lip}
\end{align}
holds for some positive, increasing function  $\locCon(\cdot) \geq 1$.
\end{lemma}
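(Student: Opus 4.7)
My plan is to reduce \eqref{eq:Lip} to the Moser calculus inequality \eqref{eq:CalcInequality} together with a classical composition estimate for Sobolev spaces. The starting point is the integral identity
$$
g(u) - g(v) = (u-v)\,h(u,v), \qquad h(u,v) := \int_0^1 \nabla g\bigl(v + \theta(u-v)\bigr)\, d\theta,
$$
which factors the difference as a product. Since $n > d/p$, $W^{n,p}$ is an algebra, and the Moser product estimate \eqref{eq:CalcInequality} immediately gives
$$
\|g(u)-g(v)\|_{W^{n,p}} \leq C\bigl( \|h\|_{L^\infty}\, \|u-v\|_{W^{n,p}} + \|h\|_{W^{n,p}}\, \|u-v\|_{L^\infty} \bigr).
$$
The hypothesis $g \in W^{n+1,\infty}(\RR^d)$ yields $\|h\|_{L^\infty} \leq \|\nabla g\|_{L^\infty(\RR^d)}$, and the Sobolev embedding $W^{n,p}(\DD) \hookrightarrow L^\infty(\DD)$ (valid since $n > d/p$) gives $\|u-v\|_{L^\infty} \leq C\|u-v\|_{W^{n,p}}$. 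So the whole estimate reduces to a suitable control of $\|h\|_{W^{n,p}}$ by an increasing function of $\|u\|_{L^\infty} + \|v\|_{L^\infty}$.

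For this I would invoke the classical Moser composition estimate applied to $\nabla g \circ w_\theta$, with $w_\theta := v + \theta(u-v)$, which states
$$
\|\nabla g(w_\theta)\|_{W^{n,p}} \leq \tilde\beta(\|w_\theta\|_{L^\infty}) (1 + \|w_\theta\|_{W^{n,p}}),
$$
where $\tilde\beta$ is increasing and locally bounded and depends on $\|g\|_{W^{n+1,\infty}}$. This is proved by expanding $\partial^\alpha [\nabla g(w_\theta)]$, for $|\alpha|\leq n$, via Fa\`a di Bruno as a finite sum of terms of the form $g^{(k+1)}(w_\theta)\prod_j \partial^{\alpha_j} w_\theta$ with $\sum_j |\alpha_j| = |\alpha|$ and $|\alpha_j|\geq 1$. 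The factor $g^{(k+1)}(w_\theta)$ is bounded in $L^\infty$ by $\|g\|_{W^{n+1,\infty}}$, while each product $\prod_j \partial^{\alpha_j} w_\theta$ is controlled in $L^p$ by H\"older together with the Gagliardo-Nirenberg interpolation
$$
\|\partial^{\alpha_j} w_\theta\|_{L^{p_j}} \leq C \|w_\theta\|_{L^\infty}^{1-\theta_j}\|w_\theta\|_{W^{n,p}}^{\theta_j}, \qquad \theta_j := |\alpha_j|/n,\qquad 1/p_j := \theta_j / p;
$$
the exponents sum correctly ($\sum_j \theta_j = 1$, $\sum_j 1/p_j = 1/p$), producing a bound that is linear in $\|w_\theta\|_{W^{n,p}}$ with a constant depending only on $\|w_\theta\|_{L^\infty}$.

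Putting these ingredients together, and using $\|w_\theta\|_{L^\infty} \leq \|u\|_{L^\infty} + \|v\|_{L^\infty}$, yields \eqref{eq:Lip} with an increasing, locally bounded $\beta$ that depends on $\|g\|_{W^{n+1,\infty}}$. The main technical obstacle is the Fa\`a di Bruno / Gagliardo-Nirenberg bookkeeping inside the composition estimate: one must check that every multiindex partition of order at most $n$ arising from differentiating $\nabla g(w_\theta)$ corresponds to a valid choice of interpolation exponents. The hypothesis $n > d/p$ is precisely what makes each $\theta_j \in [0,1]$ and the associated $L^{p_j}$ Gagliardo-Nirenberg inequality available; the remaining steps are elementary manipulations involving the Moser product estimate and the Sobolev embedding.
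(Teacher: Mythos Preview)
Your approach---factoring via the mean-value integral, applying the Moser product estimate \eqref{eq:CalcInequality}, and then controlling the composition $\nabla g(w_\theta)$ through Fa\`a di Bruno combined with Gagliardo--Nirenberg interpolation---is precisely the strategy the paper indicates. The paper gives no detailed argument, only the remark that the proof rests on ``Moser-type estimates, Gagliardo--Nirenberg interpolation inequalities, and the chain rule'' together with a reference to \cite[Chapter~13, Section~3]{Taylor2011b}; your sketch fleshes this out faithfully.

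There is, however, one point you gloss over. You assert that everything reduces to bounding $\|h\|_{W^{n,p}}$ by an increasing function of $\|u\|_{L^\infty}+\|v\|_{L^\infty}$ alone, yet your own composition estimate delivers $\|\nabla g(w_\theta)\|_{W^{n,p}} \le \tilde\beta(\|w_\theta\|_{L^\infty})(1+\|w_\theta\|_{W^{n,p}})$, which carries an extra factor $1+\|u\|_{W^{n,p}}+\|v\|_{W^{n,p}}$ into the final constant. This factor does not disappear: the inequality as literally written in the lemma (with $\beta$ depending only on $L^\infty$ norms) is in fact false. For a counterexample take a smooth compactly supported $g$ agreeing with $t\mapsto t^2$ on $[-2,2]$, set $u=w$ and $v=w-\epsilon$ for a small constant $\epsilon>0$ and a function $w$ with $\|w\|_{L^\infty}=1$ but $\|w\|_{W^{n,p}}$ arbitrarily large; then $\|g(u)-g(v)\|_{W^{n,p}}/\|u-v\|_{W^{n,p}}$ is of order $\|w\|_{W^{n,p}}$ while $\|u\|_{L^\infty}+\|v\|_{L^\infty}$ stays bounded. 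What your argument (and the cited reference) actually yields is the estimate with $\beta$ allowed to depend on $\|u\|_{W^{n,p}}+\|v\|_{W^{n,p}}$ as well. This is the standard form, and it is all the paper's applications require, since throughout Sections~\ref{sec:compact} and~\ref{sec:WmpSolutionsCons} the relevant Sobolev norms are controlled via stopping times. The gap therefore lies in the lemma's stated form rather than in your method.
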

Note that \eqref{eq:sigMappings} follows from \eqref{eq:Lip}. Moreover setting $v=0$ in \eqref{eq:Lip} also proves \eqref{eq:ExtraDerivativeAssumptforDensityStability} and \eqref{eq:BadNoiseAssumption}.
The proof of Lemma~\ref{lemma:compose} is based on Moser-type estimates (similar to \eqref{eq:CalcInequality}), Gagliardo-Nirenberg interpolation inequalities, and the chain rule. See e.g.
\cite[Chapter 13, Section 3]{Taylor2011b} for further details.

\subsection*{Linear multiplicative noise}

One important example covered under this general class of Namytskii operators 
is a linear multiplicative noise.  Here we consider
$$
\sigma(u) \dW = \alpha u dW
$$
where now $\alpha \in \RR$ and $W$ is a $1D$ standard Brownian motion. 
We obtain this special case from the above framework by taking $g = Id$ and $\alpha_1 \equiv 1$, $\alpha_k = 0$ for $k \geq 2$.   
We shall treat such noise structures in detail in Section~\ref{sec:GlobalExistenceLinMultNoise} 
(cf. Theorem~\ref{thm:GlobalExistence2D3DLinearMultiplicative}).

\subsection*{Stochastic forcing of functionals of the solution}

We may also consider functionals (linear and nonlinear) of the solution, forced by 
independent white noise processes. Suppose that, for $k \geq 1$ we are given $f_k: L^p(\DD) \rightarrow \RR$ such that
\begin{align}
	|f_k(u) - f_k(v)| \leq C \|u -v \|_{L^p} \quad \textrm{ for } u, v \in L^p
	\label{eq:FunctionalsCond}
\end{align}
where the constant $C$ is independent of $k$.  We take
\begin{align}
  \sigma_k(u) = f_k(u) \alpha_{k}(x,t) 
  \label{eq:functionalsNoise}
\end{align}
then, for any  $n \geq d/q$ 
$$
\|\sigma(u) - \sigma(v)\|_{\WB^{n,q}}^{q} := \sum_{|\alpha|\leq n} \int_{\DD} \left( \sum_{k \geq 1}  |f_k(u)- f_k(v)|^2| \partial^{\alpha} \alpha_{k}|^{2}\right)^{p/2}dx	
		\leq \| \alpha \|_{\WB^{n,q}}^p \|u - v\|_{L^p}^p.
$$
Thus, under the assumption
\eqref{eq:FunctionalsCond} if we furthermore 
assume that $\sup_{t \geq 0} \| \alpha(t) \|_{\WB^{\mm,2}} < \infty$, 
then $\sigma$ given by \eqref{eq:functionalsNoise} satisfies conditions
\eqref{eq:sigMappings}--\eqref{eq:BadNoiseAssumption}.

\subsection*{Additive Noise}

For $\sigma: [0,\infty) \rightarrow H^\mm$, with $\sup_{t \geq 0} \|\sigma(t)\|_{H^\mm}< \infty$,
we may easily observe that $\sigma$ satisfies \eqref{eq:sigMappings}--\eqref{eq:BadNoiseAssumption}.
For such noise $\sigma \dW$ may be understood in the formal expansion
$$
	\sigma \dW(t,x,\omega) = \sum_k \sigma_k(t,x) dW_k(t,\omega).
$$
Note that our results for additive noise in Theorem~\ref{thm:GlobalExistence2DAdditive}
are established under a more general $\omega$-dependent $\sigma$, which satisfies~\eqref{eq:fuckedAdditiveNoise}.

\section{Main results}
\label{sec:results}
\setcounter{equation}{0}

With the mathematical preliminaries in hand
and having established the noise structures we shall consider, we now make precise the notions of
{\em local}, {\em maximal} and {\em global} solutions of  the stochastic Euler equation \eqref{eq:E:1}--\eqref{eq:E:3}.  
\begin{definition}[{\bf Local Pathwise Solutions}]
\label{def:PathwiseSol}
Suppose that $m> d/p + 1$ with $p \geq 2$ and $d = 2,3$.
Fix a stochastic basis $\mathcal{S} := (\Omega, \mathcal{F}, \Prb,$ $
\{\mathcal{F}_t\}_{t \geq 0}, \WW)$ and $u_{0}$ an $X_{m,p}$ valued
$\mathcal{F}_{0}$ measurable random variable.  Suppose that $\sigma$
satisfies the conditions \eqref{eq:sigMappings}--\eqref{eq:BadNoiseAssumption}
(or alternatively \eqref{eq:fuckedAdditiveNoise}).
\begin{itemize}
\item[(i)] A \emph{local pathwise} $X_{m,p}$ solution of the stochastic
Euler equation is a pair $(u,\tau)$, with $\tau$
a strictly positive stopping time, and $u: [0,\infty) \times
\Omega \rightarrow X_{m,p}$ is a predictable process satisfying
\begin{align*}
 u(\cdot \wedge \tau) \in  
 C([0,\infty), X_{m,p})
\end{align*}
and for every $t \geq 0$,
\begin{equation}\label{eq:EulerLocTmInt}
   u(t \wedge \tau) + \int_{0}^{t \wedge \tau} P (u \cdot \nabla u) dt
   = u(0) + \int_{0}^{t \wedge \tau} P \sigma(u) \dW.
\end{equation}
\item[(ii)]  We say that local pathwise solutions are {\em unique} (or {\em indistinguishable})
	if, given any pair $(u^{(1)}, \tau^{(1)})$, $(u^{(2)}, \tau^{(2)})$ of local pathwise
	solutions,
	\begin{equation}\label{eq:uniquenessCutoffSolns}
    \Prb \left( \indFn{u^{(1)}(0) = u^{(2)}(0) } ( u^{(1)}(t) - u^{(2)}(t))  = 0; \forall t \in [0, \tau^{(1)} \wedge \tau^{(2)}] \right) = 1.
  \end{equation}
\end{itemize}
\end{definition}
Given the existence and uniqueness of such local solutions
we can quantify the possibility of any finite time blow-up.
In some cases we are able to show that such pathwise solution
in fact are global in time.
\begin{definition}[{\bf Maximal and global solutions}]
\label{def:MaxandGlobalSol}
Fix a stochastic basis and assume the conditions $u_{0}$ and $\sigma$
are exactly as in Definition~\eqref{def:PathwiseSol} above.
A \emph{maximal} pathwise solution is a triple $(u,\{\tau_{n}\}_{n \geq 1}, \xi)$
such that each pair $(u, \tau_{n})$ is a local pathwise solution,
$\tau_{n}$ is increasing with $\lim_{n \rightarrow \infty} \tau_{n} = \xi$
and so that
\begin{equation} \label{eq:W1inftyBlowUp}
  \sup_{t \in [0, \tau_{n}]} \|u(t) \|_{W^{1,\infty}} \geq n
\textrm{ on the set } \{ \xi < \infty \}.
\end{equation}
A maximal pathwise solution  $(u,\{\tau_{n}\}_{n \geq 1}, \xi)$ is said to be \emph{global}
if $\xi = \infty$ almost surely.\footnote{Under this definition it is clear that, for every $T > 0$,
$\sup_{t \in [0,T]} \|u(t)\|_{W^{1,\infty}}$ is almost surely finite on the set $\{\xi = \infty\}$.}
\end{definition}

Our primary goal in this work is to study local and global pathwise solutions
of the stochastic Euler equation.  These type of solutions also fall under the designation
of ``strong solutions''; we prefer the term ``pathwise'' since it avoids possible confusion
with classical terminology used in deterministic PDEs. In any case
one can also establish the existence of ``martingale'' (or probabilistically ``weak'' solutions) of
\eqref{eq:E:1}--\eqref{eq:E:3} where the stochastic basis is an unknown
in the problem and the initial conditions are only specified in law.   Indeed
such type of solutions are essentially established as an intermediate step
in the analysis which is carried out in Section~\ref{sec:compact}; see Remark~\ref{rmk:MartingaleSoln} below. 

We now state the main results of this paper.  The first result concerns the 
local existence of solutions, the proof of which
is carried out in two steps, in Sections~\ref{sec:compact} and~\ref{sec:WmpSolutionsCons} below.
\begin{theorem}[\bf Local existence of pathwise solutions] \label{thm:LocalExistence}
Fix a stochastic basis $\mathcal{S} :=$ $(\Omega, \mathcal{F},$ $\Prb, \{\mathcal{F}_t\}_{t \geq 0},$ $\WW)$.
Suppose that $m> d/p + 1$ with $p \geq 2$ and $d = 2,3$.
Assume that $u_{0}$ is an $X_{m,p}$ valued, 
$\mathcal{F}_{0}$ measurable random variable, and that $\sigma$
satisfies the conditions \eqref{eq:sigMappings}--\eqref{eq:BadNoiseAssumption}.
Then there exists a unique maximal pathwise solution $(u,\{\tau_{n}\}_{n \geq 1}, \xi)$
of \eqref{eq:E:1}--\eqref{eq:E:3}, in the sense of Definitions~\ref{def:PathwiseSol} and 
\ref{def:MaxandGlobalSol}.
\end{theorem}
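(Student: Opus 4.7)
The plan is to prove the theorem in two stages, matching Sections~\ref{sec:compact} and~\ref{sec:WmpSolutionsCons}: first construct pathwise solutions in the very smooth Hilbert space $H^{\mm}$ with $\mm = m+5$, and then descend to the sharp $W^{m,p}$ regularity by a density-stability argument.

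For the first stage I would Galerkin-approximate \eqref{eq:E:1}--\eqref{eq:E:3} on spectral subspaces, multiplying both the convective term $P(u \cdot \nabla u)$ and the noise coefficient $P\sigma(u)$ by a smooth cut-off $\theta_R(\|u\|_{W^{1,\infty}})$. The cut-off renders each Galerkin ODE globally well-posed and yields, via $H^{\mm}$ energy estimates---applying $\partial^\alpha$, pairing in $L^2(\DD)$, and invoking the commutator bound \eqref{eq:commutator}, the Temam-based pressure bound \eqref{eq:P:commutator}, the Burkholder-Davis-Gundy inequality \eqref{eq:BDG}, and the noise assumptions \eqref{eq:sigMappings}--\eqref{eq:BadNoiseAssumption}---uniform bounds in $L^{2}(\Omega; L^\infty(0,T; H^{\mm}))$ independent of the Galerkin level. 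Combined with a fractional-in-time H\"older estimate for $\int_0^t P\sigma(u^n)\dW$, these bounds give tightness of the laws on a path space such as $C([0,T]; H^{\mm-1})$; Prokhorov and Skorohod then provide almost sure convergences on a new stochastic basis, and passing to the limit in the cut-off equation produces a \emph{martingale} solution in $H^{\mm}$.

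To upgrade to a pathwise solution on the original basis $\mathcal{S}$ I would prove pathwise uniqueness for the cut-off equation in $H^{\mm}$ and then apply the Gy\"ongy-Krylov criterion (Lemma~\ref{thm:GyongyKry}). Uniqueness is the first delicate point: estimating the difference $u^{(1)} - u^{(2)}$ in $L^2(\DD)$ leaves an uncontrolled $\|\nabla u\|_{L^\infty}$ from the transport term, while estimating in $H^{\mm}$ threatens to lose derivatives in both the convective and noise contributions; both obstructions are absorbed precisely because the cut-off bounds $\|u^{(i)}\|_{W^{1,\infty}}$ and because $\mm$ is large enough that the Moser and commutator estimates close. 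Removing the cut-off a posteriori via the stopping time $\tau_R := \inf\{t : \|u(t)\|_{W^{1,\infty}} \geq R\}$ and letting $R \to \infty$ yields a unique maximal solution $(u,\{\tau_n\},\xi)$ in $H^{\mm}$ satisfying the blow-up criterion \eqref{eq:W1inftyBlowUp}; a standard partition-of-$\Omega$ argument handles general $\mathcal{F}_0$-measurable data.

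For the second stage, given $u_0 \in X_{m,p}$ I would mollify to obtain $u_0^n \in H^{\mm}$ with $u_0^n \to u_0$ in $W^{m,p}$, invoke the first stage to get the associated very smooth pathwise solutions $(u^n,\xi^n)$, and estimate $u^n - u^k$ in $W^{m,p}$ up to a common stopping time. The only dangerous term, $\|\nabla u^n\|_{W^{m,p}} \lesssim n$ introduced by the mollifier, always appears multiplied by $\|u^n - u^k\|_{W^{m-1,p}}$, and by the Kato-Lai/Masmoudi balance the product still tends to zero. To pass from convergence in probability on $n$-dependent time intervals to an almost-sure Cauchy statement on a single strictly positive stopping time I would invoke the abstract lemma of \cite{MikuleviciusRozovskii2, GlattHoltzZiane2}. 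The main obstacle is this triple interplay between the $W^{1,\infty}$ cut-off, the absence of any deterministic lower bound on the stopping times, and the mollifier-induced loss of regularity; matching these constraints is precisely what forces the gap $\mm - m = 5$ between the auxiliary and target spaces.
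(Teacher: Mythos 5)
Your proposal is correct and tracks the paper's own argument closely: the Galerkin scheme with the $W^{1,\infty}$ cut-off, uniform $H^{\mm}$ and fractional-time estimates, tightness/Prokhorov/Skorohod to obtain martingale solutions of the cut-off system, pathwise uniqueness plus Gy\"ongy--Krylov to descend to the original basis, removal of the cut-off via a stopping time and a partition of $\Omega$, and then the mollification--Cauchy argument in $W^{m,p}$ closed by the abstract lemma of \cite{GlattHoltzZiane2,MikuleviciusRozovskii2}. The one small imprecision is that the paper carries out the uniqueness estimate for the cut-off system in the $W^{m,p}$ norm rather than in $H^{\mm}$ (see Proposition~\ref{thm:Uniqueness}), which is the intermediate scale where both the $L^2$ obstruction and the derivative-loss obstruction you identify are simultaneously avoided.
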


In Section~\ref{sec:2dglobal} we show that in
two space dimensions we have, in the case of an {\em additive noise}, the
global existence of solutions.   Note that in contrast to the 
situation for the $2D$ Navier-Stokes equations (cf. e.g. \cite{GlattHoltzZiane2}),
proving the global existence for a general Lipschitz nonlinear multiplicative noise
seems to be out of reach with current methods (see Remark~\ref{rem:2D:noMultiplicative} below for
further details).

\begin{theorem}[\bf Global existence for additive noise in 2D]\label{thm:GlobalExistence2DAdditive}
Fix $m> 2/p + 1$ with $p \geq 2$, a stochastic basis $\mathcal{S} := (\Omega, \mathcal{F}, \Prb,$ $\{\mathcal{F}_t\}_{t \geq 0}, \WW)$, and assume that $u_{0}$ is an $X_{m,p}$ valued,
$\mathcal{F}_{0}$ measurable random variable. Assume that $\sigma$ does not depend on $u$ and 
\eqref{eq:fuckedAdditiveNoise} (or \eqref{eq:sigMappings}--\eqref{eq:BadNoiseAssumption}) holds. Then, there exits a unique global pathwise solution 
of \eqref{eq:E:1}--\eqref{eq:E:3}, i.e. $\xi = \infty$ almost surely.
\end{theorem}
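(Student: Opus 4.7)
The plan is to use the blow-up criterion \eqref{eq:W1inftyBlowUp} supplied by Theorem~\ref{thm:LocalExistence}: it suffices to show that for every $T > 0$, $\sup_{t \in [0,T\wedge\xi]} \|u(t)\|_{W^{1,\infty}} < \infty$ almost surely. The two-dimensional structure will be exploited through the scalar vorticity $\omega = \curl u$, which in the deterministic case is transported along particle trajectories by the divergence-free drift.

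First I would remove the stochastic forcing from the vorticity equation. Taking the 2D curl of \eqref{eq:E:1} and using $\nabla \cdot u = 0$ (so that $\curl(u \cdot \nabla u) = u \cdot \nabla \omega$) yields $d\omega + u \cdot \nabla \omega\, dt = \curl \sigma\, d\WW$. Setting $z(t) := \int_0^t \sigma\, d\WW$, which by \eqref{eq:fuckedAdditiveNoise} together with the Banach-space BDG inequality lies in $C([0,T]; W^{m+1,p}(\DD))$ almost surely, the quantity $\eta := \omega - \curl z$ satisfies the \emph{pathwise} transport equation
\begin{align*}
  \pd{t}\eta + u \cdot \nabla \eta = - u \cdot \nabla \curl z.
\end{align*}
Standard $L^{q}$ estimates for the transport equation with divergence-free drift then give, for any $q \in [1,\infty]$ and $t \leq T\wedge\xi$,
\begin{align*}
  \|\omega(t)\|_{L^{q}} \leq \|\omega_{0}\|_{L^{q}} + 2\sup_{s \leq T} \|\curl z(s)\|_{L^{q}} + \int_{0}^{t} \|u(s)\|_{L^{\infty}} \|\nabla \curl z(s)\|_{L^{q}}\, ds.
\end{align*}

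Next I would derive a $W^{m,p}$ stochastic energy inequality by applying It\^o's formula to $\|u\|_{W^{m,p}}^{p}$, using Lemma~\ref{lemma:Leray}(c) to handle the nonlinearity and pressure, while observing that in the additive case the noise contributions involve only $\|\sigma\|_{\WB^{m,p}}^{2}$. This yields
\begin{align*}
  d\|u\|_{W^{m,p}}^{p} \leq C \|u\|_{W^{1,\infty}} \|u\|_{W^{m,p}}^{p}\, dt + C\bigl(1 + \|u\|_{W^{m,p}}^{p}\bigr)\|\sigma\|_{\WB^{m,p}}^{2}\, dt + dM_{t},
\end{align*}
with $M$ a local martingale whose quadratic variation is bounded by $\|\sigma\|_{\WB^{m,p}}^{2} \|u\|_{W^{m,p}}^{2(p-1)}$. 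The 2D Beale-Kato-Majda inequality $\|u\|_{W^{1,\infty}} \lesssim 1 + \|u\|_{L^{2}} + \|\omega\|_{L^{\infty}}(1 + \log(e+\|u\|_{W^{m,p}}))$, combined with the $L^{2}$-energy identity $d\|u\|_{L^{2}}^{2} = \|\sigma\|_{\WB^{0,2}}^{2}\, dt + 2(u,\sigma\, d\WW)$ (which follows since $(u, u\cdot\nabla u) = 0 = (u, \nabla\pi)$), then feeds the $\omega$ bound of the previous step into the $W^{m,p}$ estimate.

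The main obstacle is closing the argument via a \emph{stochastic} logarithmic Gr\"onwall step. Writing $Y(t) := e + \|u(t)\|_{W^{m,p}}^{p}$, one arrives at an inequality of the schematic form $dY \leq A(t)\, Y \log Y\, dt + dM_{t}$, with $A(\cdot)$ almost surely locally integrable (by the first step together with the $L^{2}$ bound on $u$) and $M$ a local martingale. A direct BDG on $\int dM/Y$ produces a contribution like $\bigl(\int \|\sigma\|^{2} Y^{(p-2)/p}\, ds\bigr)^{1/2}$, whose finiteness is not automatic. I would handle this by applying It\^o to $\log(\log Y + e)$ in order to obtain bounded integrands against the martingale part, coupled with a localization at stopping times $\tau_{R} := \inf\{t: Y(t) \geq R\}$; passing $R \to \infty$ and invoking the classical pathwise logarithmic Gr\"onwall lemma then yields $\sup_{t \in [0, T\wedge\xi]} Y(t) < \infty$ almost surely. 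This forces $\sup_{t \in [0, T\wedge\xi]} \|u(t)\|_{W^{1,\infty}} < \infty$ a.s., and hence $\xi = \infty$ a.s., completing the proof.
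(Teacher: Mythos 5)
Your overall route mirrors the paper's: transfer the noise out of the vorticity equation by subtracting an It\^o integral, deduce a pathwise bound on $\|\curl u\|_{L^\infty}$, plug this into the Beale--Kato--Majda inequality to get a linear--logarithmic growth bound for $\|u\|_{W^{m,p}}^p$, and close with a stochastic Osgood/logarithmic Gr\"onwall argument localized at stopping times. The concavification you use, $\log(\log Y + e)$, plays the same role as the paper's $\Phi(x)=\exp\bigl(\int_0^x dr/(r\zeta(r)+1)\bigr)$ in Appendix~\ref{sec:Gronwal:SODE}, and your stopping time $\tau_R$ is the same device as the paper's $\tau_R$ in \eqref{eq:ControlingTimesforVortEnergy}.

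However, there is a genuine gap in the step where you actually produce the $L^\infty$ vorticity bound. Your transport estimate reads
$\|\omega(t)\|_{L^q} \leq \|\omega_0\|_{L^q} + 2\sup_s\|\curl z(s)\|_{L^q} + \int_0^t \|u(s)\|_{L^\infty}\,\|\nabla\curl z(s)\|_{L^q}\,ds$,
and you then take this (with $q=\infty$) as established input for the $W^{m,p}$ estimate. But $\|u(s)\|_{L^\infty}$ is precisely the quantity that could blow up at $\xi$; feeding $\|u\|_{L^\infty}\lesssim\|u\|_{W^{1,\infty}}\lesssim\|u\|_{W^{m,p}}$ back in makes the coefficient $A(t)$ in your eventual $dY\leq A(t)\,Y\log Y\,dt + dM_t$ depend on $Y^{1/p}$ itself, so the argument is circular and does not close as written. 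The paper breaks this circularity with an extra, independent step that you omit: the 2D Biot--Savart estimate $\|u\|_{L^\infty}\leq C(\|u\|_{L^2} + \|\curl u\|_{L^4})$ (cf.~\eqref{eq:2D:u:infty}), combined with a direct It\^o computation in $L^4$ on the \emph{original} vorticity equation \eqref{eq:AdditiveFIXFIX}, where the cancellation $(u\cdot\nabla w,\,w|w|^2)=0$ removes all appearances of $\|u\|_{L^\infty}$, and a BDG plus localization argument then gives $\sup_{[0,T\wedge\xi]}\|w\|_{L^4}<\infty$ almost surely. With $\|u\|_{L^2}$ controlled by the $L^2$ energy identity and $\|w\|_{L^4}$ controlled by this independent $L^4$ bound, the $\int\|u\|_{L^\infty}\,ds$ appearing in the pathwise $L^\infty$ transport estimate becomes a.s.\ finite on $[0,T\wedge\xi]$, and only then does the BKM/Gr\"onwall step become non-circular. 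You need to insert either this $L^4$ It\^o argument or, alternatively, close a Gr\"onwall inequality for $\|w\|_{L^4}$ from your $L^q$ transport bound with $q=4$ using Biot--Savart; as written, neither appears, and the vorticity bound you're using downstream has not actually been established.
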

\begin{remark}\label{rmk:AddNoiseIsFucked}
The local existence of pathwise solutions with additive noise follows directly from Theorem~\ref{thm:LocalExistence} 
in the case of a (deterministic) continuous $\sigma: [0,\infty) \to \WB^{\mm,2}$,
with $ \sup_{t \geq 0}\|\sigma(t)\|_{\mathbb{X}_{\mm,2}} < \infty$, where $\mm$ is as in \eqref{eq:BadNoiseAssumption}.
On the other hand, the proof of local existence for additive noise does not require the involved machinery employed to 
deal with a general nonlinear multiplicative noise;
in this case one can transform \eqref{eq:E:1} into a random partial differential equation, which can be treated pathwise, 
using the classical (deterministic) methods for the Euler equations (cf.~\cite{MajdaBertozzi2002}).   Of course, one has to show
that this random transformed system is measurable with respect to the stochastic elements in the problem but this
may be achieve with continuity and stability arguments.  These technicalities are essentially contained in 
\cite{Kim1}, to which we refer for further details.
\end{remark}

Finally we address the case of a {\em linear multiplicative noise}. In $2D$ we show that the pathwise  solutions are global in time.
In $3D$ we go further and prove that the noise is
regularizing at the pathwise level.  Here we are essentially 
able to establish that the time of existence converges to $+\infty$ a.s. in the {\em large noise limit}. 
More precisely, we have:
\begin{theorem}[\bf Global existence for linear multiplicative noise]\label{thm:GlobalExistence2D3DLinearMultiplicative}
Fix $\mathcal{S} := (\Omega, \mathcal{F}, \Prb, \{\mathcal{F}_t\}_{t \geq 0}, \WW)$\footnote{For the noise
structure considered here, we need only to have defined a single $1D$ standard Brownian motion.}, a stochastic basis.
Suppose that $m> d/p + 1$ with $p \geq 2$ and $d = 2,3$, and
assume that $u_{0}$ is an $X_{m,p}$ valued,
$\mathcal{F}_{0}$ measurable random variable.  For $\alpha \in \RR$ we consider
\eqref{eq:E:1}--\eqref{eq:E:3} with a linear multiplicative noise
\begin{align*}
	\sigma_{k}(u) = \sigma(u) e_{k} =
	\begin{cases}
	  \alpha u  & \textrm{ if } k =1,\\
	  0 & \textrm{ otherwise.}\\
	\end{cases}
\end{align*}
\begin{itemize}
\item[(i)] Suppose  $d = 2$.  Then for any $\alpha \in \RR$ the maximal pathwise solution of guaranteed by
	Theorem~\ref{thm:LocalExistence} is in fact global, i.e. $\xi = \infty$ almost surely.
\item[(ii)] Suppose $d =3$.  Let $R \geq 1$ and $\alpha \not = 0$ be arbitrary parameters. 
Then there exists a positive deterministic function $\kappa(R,\alpha)$
which satisfies 
\begin{align*} 
\lim_{\alpha^2 \to \infty} \kappa(R,\alpha)  = \infty\\
\end{align*}
for every fixed $R\geq 1$, 
such that whenever
\begin{align}
\Vert u_{0} \Vert_{W^{m,p}} \leq \kappa(R,\alpha), \quad a.s.
\label{eq:LinNonBlowupRelation}
\end{align}
then
\begin{align*}
  \Prb \left(  \xi = \infty \right) \geq 1 - \frac{1}{R^{1/4}}.
\end{align*}
In particular, for every $\epsilon > 0$ and any given deterministic initial condition, the probability that solutions corresponding to sufficiently large $|\alpha|$ {\em never blow up}, is greater than $1-\epsilon$.
\end{itemize}	
\end{theorem}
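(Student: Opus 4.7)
The proof rests on the identity $u(t) = e^{\alpha W_t} v(t)$, where a direct application of It\^{o}'s formula to $v = e^{-\alpha W_t} u$ shows that $v$ satisfies the \emph{random} PDE
\begin{equation*}
\partial_t v + e^{\alpha W_t}(v \cdot \nabla v + \nabla \tilde{\pi}) + \tfrac{\alpha^2}{2} v = 0, \qquad \nabla \cdot v = 0, \quad v \cdot n|_{\partial \DD} = 0,
\end{equation*}
with $v(0) = u_0$. The crucial point is the appearance of the It\^{o}-correction $-\tfrac{\alpha^2}{2} v$, which provides a pathwise damping mechanism. Once $v$ is controlled pathwise, the Euler solution $u$ is recovered via multiplication by $e^{\alpha W_t}$, which can be compared to the exponential martingale $M_t := e^{\alpha W_t - \alpha^2 t/2}$.

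For part (i), I would use that in two dimensions the vorticity $\omega = \curl v$ is a scalar solving the transport-plus-damping equation $\partial_t \omega + e^{\alpha W_t}(v \cdot \nabla \omega) = -\tfrac{\alpha^2}{2} \omega$. The method of characteristics (no vortex stretching) gives the pathwise bound $\|\omega(t)\|_{L^\infty} \leq e^{-\alpha^2 t/2} \|\omega_0\|_{L^\infty}$. Combined with Lemma~\ref{lemma:Leray}(c) applied to $v$, the Sobolev embedding $W^{m,p} \hookrightarrow W^{1,\infty}$, and the Beale-Kato-Majda inequality $\|\nabla v\|_{L^\infty} \lesssim 1 + \|\curl v\|_{L^\infty}\log(e + \|v\|_{W^{m,p}})$, this reduces the $W^{m,p}$-energy estimate for $v$ to a logarithmic Gr\"onwall inequality whose coefficients are a.s.\ locally integrable random functions (namely $e^{\alpha W_t}$ times the exponentially decaying vorticity). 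The resulting bound $\sup_{t \in [0,T]}\|v(t)\|_{W^{m,p}} < \infty$ a.s.\ for every $T$, together with the BKM blow-up criterion from Definition~\ref{def:MaxandGlobalSol}, forces $\xi = \infty$ a.s., and this transfers directly to $u$.

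For part (ii), the 3D vorticity equation has a stretching term, so the characteristic calculation in $v$-variables gives only
\begin{equation*}
\|\omega(t)\|_{L^\infty} \leq \|\omega_0\|_{L^\infty} \exp\Bigl( \int_0^t \bigl( \|\nabla u(s)\|_{L^\infty} - \tfrac{\alpha^2}{2} \bigr) ds \Bigr),
\end{equation*}
using the identity $e^{\alpha W_s}\|\nabla v(s)\|_{L^\infty} = \|\nabla u(s)\|_{L^\infty}$. Thus $\|\omega\|_{L^\infty}$ is non-increasing as long as $\|\nabla u\|_{L^\infty}$ stays below $\alpha^2/2$. I would introduce the stopping time $\tau_K := \inf\{t \geq 0 : \|u(t)\|_{W^{1,\infty}} \geq K\}$ for $K = K(\alpha)$ chosen strictly below $\alpha^2/2$. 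On $[0,\tau_K)$ the vorticity is controlled, and applying a $W^{m,p}$-energy estimate to $v$ together with BKM yields a differential inequality for $\|v\|_{W^{m,p}}$ whose solution can be rewritten, upon pulling back $u = e^{\alpha W_t} v$, as a uniform bound of the schematic form $\|u(t)\|_{W^{m,p}} \leq C R \|u_0\|_{W^{m,p}}$ on $[0, \tau_K) \cap A_R$, where $A_R := \{\sup_{t \geq 0} M_t \leq R\}$. Choosing $\kappa(R,\alpha)$ so that $C R \kappa(R,\alpha) < K$ (via the Sobolev embedding $W^{m,p} \hookrightarrow W^{1,\infty}$, which gives $K \sim \alpha^2$), the strict inequality $\|u(t)\|_{W^{1,\infty}} < K$ is self-consistent on $[0, \tau_K) \cap A_R \cap \{\|u_0\|_{W^{m,p}} \leq \kappa\}$, which forces $\tau_K = \infty$ there. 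The conclusion then follows from the classical bound $\Prb(A_R) \geq 1 - R^{-1/4}$ for the nonnegative continuous martingale $M_t$ (with $M_\infty = 0$ a.s.), combined with the BKM blow-up criterion.

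The main obstacle lies in part (ii): closing the bootstrap requires delicate bookkeeping of the interplay between the It\^{o}-correction damping $-\alpha^2/2$, the growth factor $e^{\alpha W_t}$ multiplying the convective nonlinearity, and the BKM-driven log-Gr\"onwall factors. The key technical trick is to parametrize estimates through the martingale $M_t$ rather than through $e^{\alpha W_t}$ alone: the damping $e^{-\alpha^2 t/2}$ in $v$ exactly absorbs the deterministic part $e^{\alpha^2 t/2}$ in the transform $u = e^{\alpha W_t} v = M_t e^{\alpha^2 t/2} v$, so that the growth of $\|u\|_{W^{m,p}}$ is dictated solely by the supremum of $M_t$. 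Tracking the dependence of constants on $\alpha$ (so as to confirm $\kappa(R,\alpha) \to \infty$ as $\alpha^2 \to \infty$ for each fixed $R$) forces one to choose $K = K(\alpha)$ linearly in $\alpha^2$ and $\kappa \sim \alpha^2 / R$ up to logarithmic corrections, which in turn requires a sharp form of BKM and a careful use of Lemma~\ref{lemma:Leray}(c).
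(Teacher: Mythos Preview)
Your overall strategy coincides with the paper's: transform to $v = e^{-\alpha W_t} u$, exploit the It\^o-correction damping $-\tfrac{\alpha^2}{2}v$, control the vorticity pathwise, and then close via a Beale--Kato--Majda log-Gr\"onwall argument (2D) or a bootstrap with stopping times combined with an exit-time estimate for a geometric Brownian motion (3D). Part (i) is essentially as in the paper.

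There is, however, a concrete inaccuracy in your 3D sketch. You assert that the damping $e^{-\alpha^2 t/2}$ in $v$ ``exactly absorbs'' the factor $e^{\alpha^2 t/2}$, so that the growth of $\|u\|_{W^{m,p}}$ is governed by the martingale $M_t = e^{\alpha W_t - \alpha^2 t/2}$. This is too optimistic: on $[0,\tau_K)$ the vortex-stretching term consumes part of the damping, so the vorticity (and hence $\|v\|_{W^{m,p}}$ via BKM) only decays at a rate strictly smaller than $\alpha^2/2$. In the paper this manifests as follows: one works with $y(t)=\|v\|_{W^{m,p}}e^{\alpha^2 t/4}$, the log-Gr\"onwall coefficient then contains the factor $e^{\alpha W_t - \alpha^2 t/4}$, and one must further split off $e^{-\alpha^2 t/8}$ to make the coefficient integrable in time. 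What remains is $\rho_\alpha(t)=e^{\alpha W_t - \alpha^2 t/8}$, which is \emph{not} a martingale but a geometric Brownian motion with drift $\mu=3\alpha^2/8$. The bound $\Prb(\sup_t \rho_\alpha \leq R)\geq 1-R^{-1/4}$ then comes from a scale-function/optional-stopping computation for this GBM (Lemma~\ref{thm:GBMprops}), with exponent $1-2\mu/\alpha^2=1/4$. By contrast, for the genuine martingale $M_t$ the maximal inequality gives $1-R^{-1}$, not $1-R^{-1/4}$; your last sentence in the 3D paragraph conflates the two processes. Once you replace $M_t$ by the correct GBM $\rho_\alpha$ and track the resulting ODE carefully (this is where the technical Lemma~\ref{lemma:ODE:log} enters, and where the precise form of $\kappa(R,\alpha)$ and its asymptotics in $\alpha$ are obtained), the argument closes exactly as in the paper.
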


\begin{remark}[\bf Lack of global well-posedness in two dimensions with generic multiplicative noise] \label{rem:2D:noMultiplicative}
We emphasize that even in the two-dimensional setting, and even for $\DD = \RR^{2}$, the global existence of smooth solutions to \eqref{eq:E:1}--\eqref{eq:E:3} 
for a {\em general} Lipschitz multiplicative noise appears to be out of reach. 
In fact, the analogous result remains open even in the deterministic setting
unless the forcing is linear. Indeed, let us consider the Euler equations with a {\em solution-dependent forcing}
\begin{align}
&\partial_t u + u \cdot \nabla u + \nabla \pi = f(u),\  \nabla \cdot u = 0 \label{eq:forcedEuler}
\end{align}
where $f$ is a smooth function mapping $\RR^{2} \to \RR^{2}$, which decays sufficiently fast at infinity.  In order to obtain the global in time regularity 
of \eqref{eq:forcedEuler} one must have an a priori global in time bound for the supremum of the vorticity $w = \nabla^{\perp} \cdot u$ 
(or at least a bound in a Besov space ``sufficiently close'' to $L^{\infty}$). However, using the Biot-Savart law, the evolution of $w$ is governed by
\begin{align}
\partial_{t} w + u \cdot \nabla w = - \partial_{1} f_{1}(u) w - (\partial_{1} f_{2}(u) + \partial_{2} f_{1}(u)) \RSZ_{12} w+ (\partial_{2} f_{2}(u) - \partial_{1} f_{1}(u)) \RSZ_{11} w \label{eq:forcedEulerVorticity}
\end{align}
where $\RSZ_{ij}$ are the Riesz transforms $\partial_{i} \partial_{j} (-\Delta)^{-1}$, and $f(u) = (f_1(u), f_2(u))$. 
While the first term on the  right side of of \eqref{eq:forcedEulerVorticity} 
is harmless for $L^{\infty}$ estimates on $w$, {\em unless} $f$ is such that $\partial_{1}  f_{2} + \partial_{2} f_{1}  = \partial_{2} f_{2}  - \partial_{1} f_{1} = 0$ 
identically (which is true for $f (u) = u$, that is $f_1(x,y) = x$ and $f_2(x,y) = y$), the remaining two terms prevent one from obtaining a bound on $\Vert w \Vert_{L^{\infty}}$ using classical methods, since Calder\'on-Zygmund 
operators are not bounded on $L^{\infty}$. Recently it was proven in~\cite{ConstantinVicol2011} that if one adds an {\em arbitrary} amount of dissipation, in the form of a positive power of $-\Delta$, or even dissipation as mild as $\log(1 -\Delta)$, 
to the left side of \eqref{eq:forcedEuler}, then the equations have global in time smooth solutions. The global well-posedness of \eqref{eq:forcedEuler} with no dissipation remains open for generic 
smooth forcing $f$.
\end{remark}

\section{A priori estimates}
\label{sec:apriori}\setcounter{equation}{0}

In this section we carry out a priori estimates for
solutions evolving in $X_{m,p}$ of \eqref{eq:E:1}--\eqref{eq:E:3} with $m > d/p + 1$, $p \geq 2$.  
The bounds established
in this section will be used extensively throughout the rest of the work.
We begin with the bounds in the Hilbert space case, namely
for solutions in $X_{m}$. These estimates will be used in Section~\ref{sec:compact} in the context of a Galerkin scheme.

\subsection{\texorpdfstring{$L^{2}$}{L 2}-based estimates}
\label{sec:aprioriL2}
We start with estimates in $H^{m}(\DD)$, where
$m > d/2 + 1$.   Let  $u$ be a solution of \eqref{eq:E:1}--\eqref{eq:E:2}, which lies in $H^{m+1}(\DD)$ and is defined up
to a (possibly infinite) maximal stopping time of existence $\xi >0$. Note however, that the a priori estimates \eqref{eq:J1}--\eqref{eq:p=2:apriori:final} involve only the $H^{m}$ norm of the solution $u$. 

Let $\alpha\in{\mathbb N}^d$ be a multi-index with $|\alpha| \leq m$.
Applying the Leray projector $P$ and then $\partial^\alpha$ to \eqref{eq:E:1} we obtain
\begin{align}
  d(\partial^\alpha u)
  + \partial^\alpha P (u \cdot \nabla u) dt
   = \partial^\alpha P \sigma(u) \dW. \label{eq:DalphaDerU}
\end{align}
By the It\={o} lemma we find
\begin{align}
  d \norm{\partial^\alpha u}{L^2}^2
       =& -2 \left(
      \partial^\alpha u ,
      \partial^\alpha P (u \cdot\nabla u)
       \right) dt
      + \|\partial^\alpha
      P \sigma (u)\|^2_{\XX_0}
       dt
     + 2 \left(
       \partial^\alpha u  ,
                       \partial^\alpha P \sigma(u)
       \right) \dW \notag\\
     =& (J_1^\alpha  + J_3^\alpha) dt + J_3^\alpha \dW . \label{eq:Ito:ODE}
\end{align}
Fix  $T > 0$ and any stopping time $\tau
\leq \xi \wedge T$. We find that for every $s \in [0,\tau]$,
\begin{align*}
 \norm{ \partial^{\alpha} u(s) }{L^2}^2 \leq
 \norm{ \partial^{\alpha} u_0 }{L^2}^2 +
 \int_{0}^{s}
    (|J_1^\alpha| + |J_2^\alpha|)ds' +
  \left|   \int_{0}^{s} J_3^\alpha \dW \right|.
\end{align*}
Hence, summing over all $|\alpha| \leq m$, taking
a supremum over $s \in [0, \tau]$ and then
taking the expected value we get
\begin{align}
    \E \sup_{s \in [0, \tau]} \norm{  u(s) }{H^{m}}^2
    \leq&
    \E \norm{ u_0 }{H^{m}}^2 +
    \E\sum_{|\alpha| \leq m} \int_{0}^{\tau}
       (|J_1^\alpha| + |J_2^\alpha|)dt'
     + \sum_{|\alpha| \leq m}  \E \left(
        \sup_{s \in [0, \tau]}
       \left|   \int_{0}^{s} J_3^\alpha \dW \right| \label{eq:IntIneq1}
       \right).
\end{align}

We first treat the drift terms $J_1^\alpha$ and $J_2^\alpha$
which may be estimated pointwise in time.  We bound the nonlinear term
$J_1^\alpha$ by setting $p=2$ and $v=u$ in \eqref{eq:P:commutator} 
to obtain
\begin{align}\label{eq:J1}
\sum_{ |\alpha|\leq m} |J_1^\alpha| \leq C \| u \|_{W^{1,\infty}} \| u \|_{H^{m}}^2
\end{align}
for some positive constant $C=C(m,\DD)$.
In view of the assumption \eqref{eq:sigMappings} the $J_2^\alpha$ term
is direct:
\begin{align}
  \sum_{|\alpha|\leq m} |J_2^\alpha| \leq \locCon(\|u\|_{L^\infty})^2(1 + \| u \|_{H^{m}}^2). \label{eq:J2}
\end{align}
where $\locCon$ is the increasing function given in \eqref{eq:sigMappings}.

We handle the stochastic term, involving $J_3^\alpha$, using the Burkholder-Davis-Gundy
inequality \eqref{eq:BDG} and assumption \eqref{eq:sigMappings}:
\begin{align*}
  \E \left(
  \sup_{s \in [0, \tau]}
  \left| \int_{0}^{s}  J_3^\alpha \dW \right|
\right)
       \leq C
       \E\left(
  \int_{0}^{\tau}
    \|\partial^\alpha u\|_{L^2}^2
    \|P\sigma(u) \|^2_{\WB^{m,2}}
  dt
    \right)^{1/2}
           \leq C
      \E\left(
\int_{0}^{\tau}
  \|\partial^\alpha u\|_{L^2}^2
  \locCon(\|u\|_{L^\infty})^2 (1 + \|  u \|_{H^{m}}^2) dt
  \right)^{1/2}.
\end{align*}
Now, summing
over $|\alpha| \leq m$, we infer
\begin{align}\label{eq:BDGest1}
  \sum_{|\alpha| \leq m}
  \E \left(
     \sup_{s \in [0, \tau]}
      \left| \int_{0}^{s}  J_3^\alpha \dW \right|
     \right)
     \leq \frac{1}{2} \E \sup_{s \in [0, \tau]} \| u\|_{H^m}^2
 	 +      C\E\int_{0}^{\tau}
	  \locCon(\|u\|_{L^\infty})^2 (1 + \|  u \|_{H^{m}}^2) dt.
\end{align}
In view of the estimate \eqref{eq:J1} for the nonlinear term,
we now define the stopping time
\begin{align}
	\xi_{R} = \inf
	\left\{t \geq 0 \colon \|u (t)\|_{W^{1,\infty}} \geq R \right\}.
	\label{eq:apriori:stopping:time}
\end{align}
Combining the estimates
\eqref{eq:J1}--\eqref{eq:BDGest1},
we find that for any $t > 0$, by taking $\tau = t \wedge \xi_R$,
\begin{align*}
    \E \sup_{s \in [0, \xi_R \wedge t]}
    \|u\|^2_{H^m}
    \leq&
    2\E \|u_0\|^2_{H^m}
    + C \E \int_0^{\xi_R \wedge t}
    (\| u\|_{W^{1,\infty}} + \locCon(\| u\|_{L^\infty})^2)(1+ \|u\|^2_{H^m} ) ds\\
    \leq& 2\E \|u_0\|^2_{H^m}
    + C \int_0^t \biggl(1 +
     \E \sup_{r \in [0, \xi_R \wedge s]}
    \|u\|^2_{H^m} \biggr) ds,
\end{align*}
where the final constant $C$ depends on $R$ through
$R + \locCon(R)^2$. From the classical Gr\"onwall inequality we infer
\begin{align}
    \E \sup_{s \in [0, \xi_R \wedge T]} \|u\|^2_{H^m}
    \leq C( 1+ \E \|u_0\|_{H^m}^2)
    \label{eq:p=2:apriori:final}
\end{align}
where $C = C(m,d, \DD, T, R, \locCon)$.

Of course estimate \eqref{eq:p=2:apriori:final} does not prevent $\|u\|_{W^{1,\infty}}$ from blowing up before $T$;
the bound  \eqref{eq:p=2:apriori:final} grows exponentially in $R$ and hence we do not a priori know that $\xi_R \to \infty$ as $R \to \infty$. 
Note also that, in contrast to the case of the full space (or in the periodic setting), when $\DD$ is a smooth simply-connected bounded domain,
the non-blow-up of solutions is controlled by $\| u \|_{W^{1,\infty}}$, rather than the classical $\| \nabla u \|_{L^{\infty}}$.   
This is due to the nonlocal nature of the pressure. In the bound \eqref{eq:p=2:apriori:final} this is inherently expressed through the definition of the stopping time $\xi_{R}$. 
Of course, the $L^\infty$ bound on $u$ is also needed to control the terms involving $\sigma$.

\subsection{\texorpdfstring{$L^{p}$}{L p}-based estimates, \texorpdfstring{$p>2$}{p>2}.}
\label{sec:LpEst}

We now return to \eqref{eq:DalphaDerU} again for any $\alpha$, $|\alpha| \leq m$.
We apply the It\={o}
formula, pointwise in $x$, for the function $\phi(v) = |v|^p = (|v|^2)^{p/2}$.
After integrating in $x$ and using the stochastic Fubini theorem
(see \cite{ZabczykDaPrato1}) we obtain:
\begin{align}
  d \|\partial^{\alpha}u\|^p_{L^p} =& -
   p \int_{\DD}
    \partial^\alpha u  \cdot
      \partial^\alpha P(u \cdot\nabla u) |\partial^{\alpha} u|^{p -2}dx dt 
 	\notag \\
      &+ \sum_{k \geq 1}\int_{\DD}   \left( \frac{p}{2}
       |\partial^\alpha P\sigma_k (u)|^2 |\partial^{\alpha} u|^{p -2}
       + \frac{p(p-2)}{2} (\partial^\alpha u  \cdot  \partial^\alpha P\sigma_k(u))^{2} |\partial^{\alpha} u|^{p -4}
               \right)
       \, dx  dt
       \notag\\
     &+ p \sum_{k \geq 1} \left(
       \int_{\DD}   \partial^\alpha u  \cdot
                       \partial^\alpha P \sigma_k(u) |\partial^{\alpha} u|^{p -2}\,dx
       \right) dW_k 
       \notag \\
       :=& I_1^{\alpha} dt +  I_2^{\alpha} dt + I_3^{\alpha}\dW.
       \label{eq:LpItoFormalSoln}
\end{align}
By letting $v=u$ in \eqref{eq:P:commutator} we
bound
\begin{align}
  |I_1^{\alpha}| 
        \leq C \| u\|_{W^{1,\infty}} \|u\|^{p}_{W^{m,p}}. \label{eq:J1EstLp}
\end{align}
We turn now to estimate the terms specific to the stochastic case.  For
$I_{2}^{\alpha}$, using \eqref{eq:sigMappings} we have
\begin{align}
  |I_{2}^{\alpha}| \leq& C \int_{\DD} \sum_{k \geq 1} |\partial^\alpha P \sigma_k (u)|^2 |\partial^{\alpha} u|^{p -2} dx
             \leq 
                       C \|P\sigma(u)\|_{\WB^{m,p}}^2
                       \|u\|_{W^{m,p}}^{p-2}
             \leq C \locCon(  \|u\|_{L^\infty})^2 (1 +
                       \|u\|_{W^{m,p}}^{p}).
                       \label{eq:ItoCorTemEstsLp}
\end{align}
To estimate the stochastic integral terms involving $I_{3}$, we
apply the Burkholder-Davis-Gundy inequality, \eqref{eq:BDG}, the Minkowski
inequality for integrals, and use
\eqref{eq:sigMappings}.  We obtain, for any stopping time $\tau \leq T \wedge \xi$,
\begin{align}
  \E \left( \sup_{s \in [0,\tau]} \left|\int_{0}^{s}I_{3}^{\alpha}\dW \right| \right)
  	\leq&  C\E \left( \int_{0}^{\tau} \sum_{k \geq 1} \left(
       \int_{\DD}   \partial^\alpha u  \cdot
                       \partial^\alpha P \sigma_k(u) |\partial^{\alpha} u|^{p -2}\,dx
       \right)^{2}ds
	\right)^{1/2} \notag \\
	\leq&  C\E \left( \int_{0}^{\tau} \left(
       \int_{\DD}  \left( \sum_{k \geq 1}
                       |\partial^\alpha P \sigma_k(u)|^2
                       |\partial^{\alpha} u|^{2(p -1)}
                       \right)^{1/2} \,dx
       \right)^{2}ds
	\right)^{1/2} \notag \\
	\leq&C
   \E \left( \int_{0}^{\tau}  \|\partial^{\alpha} u\|^{2(p-1)}_{L^p}
       \left(\int_{\DD} \left( \sum_{k \geq 1}
                       |\partial^\alpha P \sigma_k(u)|^2
                       \right)^{p/2} \,dx\right)^{2/p}
       ds
	\right)^{1/2}\notag \\
	\leq& C\E \left( \sup_{s \in [0,\tau]}\|\partial^{\alpha} u\|^{p/2}_{L^p}
	\left( \int_{0}^{\tau}  \|u\|^{p-2}_{W^{m,p}}
	\locCon(  \|u\|_{L^{\infty}})^2 (1 + \|u\|_{W^{m,p}}^2)
       ds
	\right)^{1/2}
	\right) \notag \\
	\leq& \frac{1}{2} \E \sup_{s \in [0,\tau]}\|\partial^{\alpha} u\|^{p}_{L^p} + C
	\E  \int_{0}^{\tau}
	\locCon(  \|u\|_{L^{\infty}})^2(1 + \|u\|_{W^{m,p}}^p)
       ds.
       \label{eq:MFJonesConsultsForBDG}
\end{align}
Combining the $L^p$ It\=o formula \eqref{eq:LpItoFormalSoln} with the estimates \eqref{eq:J1EstLp}--\eqref{eq:MFJonesConsultsForBDG}, 
and making use of the stopping time $\xi_R$ defined in \eqref{eq:apriori:stopping:time}, we may obtain, as in the Hilbert case,
\begin{align}
    \E \sup_{s \in [0, \xi_R \wedge T]} \|u\|^p_{W^{m,p}}
    \leq C( 1+ \E \|u_0\|_{W^{m,p}}^p)
    \label{eq:pgeq2:apriori:final}
\end{align}
where $C = C(m,d, \DD, T, R, \locCon)$.

\begin{remark}[\bf From a priori estimates to the construction of solutions]
Having completed the a priori estimates in $W^{m,p}$, we observe that,
 even for the deterministic Euler equations on a bounded domain, the construction of solutions 
 is non-trivial and requires a delicate treatment of the coupled elliptic/degenerate-hyperbolic system (see e.g.~\cite{KatoLai1984,Temam1975}).
 In addition, the stochastic nature of the equations introduces a number of additional difficulties, such as the the lack of compactness in the $\omega$ variable. 
 We overcome these difficulties in Sections~\ref{sec:compact} and \ref{sec:WmpSolutionsCons} below, by first constructing a sequence of very smooth approximate 
 solutions evolving from mollified initial data, and then passing to a limit using a Cauchy-type argument.
\end{remark}

\section{Compactness methods and the existence of very smooth solutions}
\label{sec:compact} \setcounter{equation}{0}

Leet $p\geq 2$ and $m> d/p +1$ be as in the statement of Theorem~\ref{thm:LocalExistence}.
In this section we establish the existence of ``very smooth'' solutions of
\eqref{eq:E:1}--\eqref{eq:E:3}, that is solutions in $H^{\mm}$, where $\mm = m+5$ (so that $\mm > m+ 3 + d(p-2)/(2p)$ for any $d=2,3$ and $p \geq 2$). We fix this $\mm$ throughout the rest of the paper.
In particular we shall use that $H^{\mm-2} \subset W^{m+1,p}$ and $\mm>d/2+3$.
Note that the the initial data in the statement of our main theorem only lies in $W^{m,p}$, not necessarily in $H^{\mm}$,
but we will apply the results in this section to a sequence of mollified initial data (cf.~\eqref{eq:JBONA:DATA} below), 
and then use a limiting argument in order to obtain the local existence of pathwise solutions for all data in $W^{m,p}$ (see Section~\ref{sec:WmpSolutionsCons}).

We begin by introducing a Galerkin scheme with cut-offs in front 
of both the nonlinear drift and diffusion terms.  
Crucially, these cut-offs allow us to obtain
uniform estimates in the Galerkin approximations 
globally in time (see Remark~\ref{rmk:CutOffReasons} below).
We then exhibit the relevant uniform estimates for
these systems which partially follow from the a priori 
estimates in Section~\ref{sec:apriori}.
We next turn to establish compactness with a variation
on the Arzela-Ascoli theorem, tightness 
arguments, and the Skorohod embedding theorem.
In this manner we initial infer the existence of martingale solutions to
a cutoff stochastic Euler system (cf. \eqref{eq:LimSys:EulerCutOff} below)
in a very smooth spaces. We finally turn to prove the existence of pathwise 
solutions by establishing
the uniqueness for this cutoff system and applying the Gy\"ongy-Krylov convergence
criteria as recalled in Lemma~\ref{thm:GyongyKry} below.

\subsection{Finite Dimensional Spaces and The Galerkin scheme}
\label{sec:GalerkinScheme}

For each $u \in X_{0}$, by the
Lax-Milgram theorem, there exists a unique $\Phi(u) \in X_{\mm}$ solving the variational problem
\begin{align*}
(\Phi(u),v)_{H^{\mm}} = (u,v), \qquad \mbox{for all} \qquad v \in X_{\mm}.
\end{align*}
Actually, the regularity of $\Phi(u)$ is expected to be better. In \cite{Ghidaglia1984} it is shown
that in fact the maximal regularity
$
\Phi(u) \in X_{2 \mm}
$
holds.
We let $\{\phi_{k}\}_{k=1}^{\infty}$ be the complete orthonormal system (in $X_{0}$) of eigenfunctions
for the linear map $u \mapsto \Phi(u)$, which is compact, injective and self-adjoint on $X_{0}$. Therefore,
$(\phi_{k},v)_{H^\mm} = \lambda_{k} (\phi_{k},v)$ for all $v \in X_{\mm}$, where $\lambda_{k}^{-1} > 0$ is the
eigenvalue associated to $\phi_{k}$, and by \cite{Ghidaglia1984} we know  $\phi_{k}$ lies in $X_{2\mm}$ for all $k \geq 1$.

For all $n\geq 1$, we consider the orthogonal projection operator $P_{n}$, mapping $X_{0}$ onto ${\rm span}\{\phi_{1},\dots,\phi_{n}\}$,  given explicitly by
\begin{align*}
P_{n} v = \sum\limits_{j=1}^{n} (v,\phi_{j}) \phi_{j}, \quad \textrm{ for all } v \in X_{0}.
\end{align*}
Note that these $P_n$ are also uniformly bounded in $n$ on $X_\mm$, $X_{\mm-1}$, etc.  See e.g. \cite{LionsMagenes1} for further details.

Fix $R >0$ to be determined, choose a $C^{\infty}$-smooth non-increasing function $\theta_{R}: [0,\infty) \mapsto [0,1]$ such that
\begin{displaymath}
\theta_{R}(x) =
\begin{cases}
 1 \quad &\textrm{ for } |x| < R, \\
 0 \quad &\textrm{ for } |x|> 2R.
 \end{cases}
\end{displaymath}
We consider the following Galerkin approximation scheme for  \eqref{eq:E:1}
\begin{align}
	&d u^{n} + \theta_{R} (\Vert u^{n} \Vert_{W^{1, \infty}})  P_{n} P(u^{n} \cdot \nabla u^{n}) dt
	= \theta_{R} (\Vert u^{n} \Vert_{W^{1,\infty}}) P_{n}P \sigma(u^{n}) \dW, \label{eq:Galerkin}\\
	&u^{n}(0) = P_{n} u_{0}.\label{eq:Galerkin:IC}
\end{align}
The system \eqref{eq:Galerkin}--\eqref{eq:Galerkin:IC} may be
considered as an SDE in $n$ dimensions, with locally Lipschitz drift (cf. Proposition~\ref{thm:Uniqueness} below) and
globally Lipschitz diffusion (cf.~\eqref{eq:sigMappings}).  Since we also have the additional
cancelation property $(P_n P(u \cdot \nabla u), u)_{L^{2}}= 0$ for all $u \in P_nX_\mm$ we may infer
that there exists a unique global in time
solution $u^{n}$ to \eqref{eq:Galerkin}--\eqref{eq:Galerkin:IC},
evolving continuously on $P_{n}X_{\mm}$.  See e.g. \cite{Flandoli1} for further details.

\begin{remark}\label{rmk:CutOffReasons}
The cutoff functions in \eqref{eq:Galerkin} allow us to obtain uniform estimates for $u^{n}$ in $L^{\infty}([0,T],X_{\mm})$
for \emph{any fixed, deterministic} $T> 0$.  Without this cutoff function we are only able
to obtain uniform estimates up to a sequence of
stopping times $\tau^{n}$, depending on $n$.
In contrast to the deterministic case it is unclear if, for example, $\inf_{n \geq 1} \tau^{n} >0$ almost surely.
Note however that the presence of this cut-off causes additional difficulties in the passage to the 
limit of martingale solutions, see Remark~\ref{rmk:MartingaleSoln},
and in order to establish the uniqueness of solutions associated to
the related to the limit cut-off system, see \eqref{eq:LimSys:EulerCutOff}, Proposition~\ref{thm:Uniqueness} and 
Remark~\ref{Rmk:MiscOnUniqueness} below.  
\end{remark}

\subsection{Uniform Estimates}
\label{sec:UniformGalerkinEstimates}

Applying the It\=o formula to \eqref{eq:Galerkin}, and using that $P_{n}$ is self-adjoint on $X_{\mm}$, similarly to \eqref{eq:Ito:ODE} we obtain
\begin{align*}
  d \norm{u^{n}}{H^\mm}^2
       =& -2 \theta_{R} (\Vert u^{n} \Vert_{W^{1, \infty}}) \left( u^{n}, P(u^{n} \cdot\nabla u^{n}) \right)_{H^{\mm}} dt\\
      &+ \theta_{R} (\| u^{n} \|_{W^{1,\infty}})^2\Vert P_{n}P\sigma (u^{n})\Vert^2_{\XX_\mm} dt
     + 2 \theta_{R} (\| u^{n} \|_{W^{1,\infty}}) \left(  u^{n}, P\sigma(u^{n}) \right)_{H^{\mm}} \dW. 
\end{align*}
Further on, in order to establish the needed compactness in the probability 
distributions associated to $u^{n}$, we need uniform estimates on 
higher moments of $\|u^{n}\|_{H^{\mm}}^{2}$.
For this purpose we fix any $r \geq 2$ and compute $d (\norm{u^{n}}{H^\mm}^2)^{r/2}$
from the It\={o} formula and the evolution of $\|u^{n}\|_{H^{\mm}}^{2}$. We find
\begin{align}
   d \norm{u^{n}}{H^\mm}^r
       =& -r \theta_{R} (\Vert u^{n} \Vert_{W^{1, \infty}})\norm{u^{n}}{H^\mm}^{r-2}  \left( u^{n}, P(u^{n} \cdot\nabla u^{n}) \right)_{H^{\mm}} dt \notag\\
       &+ \theta_{R} (\| u^{n} \|_{W^{1, \infty}})^2 \left(
        \frac{r}{2}  \norm{u^{n}}{H^\mm}^{r-2}  \Vert P_{n}P\sigma (u^{n})\Vert^2_{ \XX_\mm} 
       + \frac{r(r -2)}{2} \|u^{n}\|_{H^{\mm}}^{r-4}\left (  u^{n}, P\sigma(u^{n}) \right)_{H^{\mm}}^2  \right)dt\notag \\
     &+ r \theta_{R} (\| u^{n} \|_{W^{1, \infty}}) \norm{u^{n}}{H^\mm}^{r-2} \left(  u^{n}, P\sigma(u^{n}) \right)_{H^{\mm}} \dW. \label{eq:Ito:cutoffLP}
\end{align}

Using bounds similar to the a priori estimates of Section~\ref{sec:apriori}, we obtain the estimate
\begin{align*}
   \E \left( \sup_{s \in [0 , t]}  \norm{u^{n}}{H^{\mm}}^r \right)
       \leq&    \E \norm{P_{n}u_{0}}{H^{\mm}}^r +C \E \int_{0}^{t}
        \theta_{R} (\| u^{n} \|_{W^{1, \infty}}) \left(\locCon(\| u^{n} \|_{L^{\infty}})^2  +  \norm{u^{n}}{W^{1,\infty}} \right)
       \left(1+\norm{u^{n}(s)}{H^{\mm}}^r \right) ds\\
       &\qquad \qquad \quad+C \E \left( \int_{0}^{t} \theta_{R} (\| u^{n} \|_{W^{1, \infty}})^2 \locCon(\| u^{n} \|_{L^{\infty}})^2 \|u^{n}\|_{H^{\mm}}^{r} (1+\|u^{n}\|_{H^\mm}^{r})
       ds \right)^{1/2} \\
       \leq&    \E \norm{u_{0}}{H^{\mm}}^r +C \int_{0}^{t}  1+\E \left( \sup_{s' \in[0,s]}\norm{u^{n}(r)}{H^{\mm}}^r\right) ds +
       \frac{1}{2}\E \left( \sup_{s\in[0,t]}\norm{u^{n}(r)}{H^{\mm}}^r\right),
 \end{align*}
 where $C$ is a constant independent of $n$, depending on $\mathcal{D}$, $\mm$, $r$, and $R$ (through
 $\theta_{R}$ and $\locCon$).

 Therefore, rearranging and applying the standard Gr\"onwall inequality, we obtain that, for any $T>0$
 \begin{align}
  \sup_{n \geq 1}\E  \sup_{s \in [0 , T]}  \norm{u^{n}}{H^{\mm}}^r \leq C, \label{eq:Galerkin:HmBound}
 \end{align}
 for some positive finite constant $C = C (T,R,r, \locCon, \E\norm{u_{0}}{H^{\mm}}^{r})$.

In order to obtain the compactness needed to pass to the limit in 
$u^{n}$ we also would like to have uniform estimates on the time
derivatives of $u^{n}$.  Since in the stochastic case we do not expect $u^{n}$ to be differentiable
in time, we have to content ourselves instead with estimates on fractional time derivatives of order strictly
less than $1/2$.   In order to carry out such estimates we
shall also make use of a variation on the Burkholder-Davis-Gundy inequality \eqref{eq:BDG},
as derived in \cite{FlandoliGatarek1}.

For this purpose, let us briefly recall a particular characterization
of the Sobolev spaces $W^{\alpha, q}([0,T], X)$ 
where $X$ may be any separable Hilbert space.  See, for example, \cite{ZabczykDaPrato1}
for further details.
For $q> 1$ and $\alpha \in (0,1)$ we define
\begin{align*}
 W^{\alpha, q} ([0,T]; X) :=
 \left\{ v \in L^q([0,T]; X);
    \int_0^T \int_0^T
    \frac{\| v(t') - v(t'') \|_{X}^q}{|t'-t''|^{1 + \alpha q}}dt' dt''
        < \infty   \right\},
\end{align*}
which is endowed with the norm
\begin{displaymath}
  \| v \|_{W^{\alpha,p}([0,T]; X)}^q :=
  \int_0 ^T \| v(t') \|^q_X dt' +
  \int_0^T \int_0^T
  \frac{\|v(t') - v(t'')\|_{X}^q}{|t'-t''|^{1 + \alpha q}}dt' dt''.
\end{displaymath}
Note that for $\alpha \in (0,1)$, $W^{1,q}([0,T]; X) \subset W^{\alpha, q} ([0,T]; X)$ with
$\| v \|_{W^{\alpha,q}([0,T]; X)} \leq C \| v \|_{W^{1,q}([0,T]; X)}$.
As in \cite{FlandoliGatarek1} one can show from \eqref{eq:BDG} that for any $q \geq 2$ and any
$\alpha \in [0,1/2)$
\begin{equation}\label{eq:BDGfrac}
  \E \left(
     \left\| \int_0^{t} G dW \right\|_{W^{\alpha, q}([0,T];X)}^q \right)
     \leq C \E \left(
    \int_0^T \|G\|_{L_2(\mathfrak{U}, X)}^q dt \right),
\end{equation}
over all $X$ valued predictable $G \in L^{q}(\Omega; L^{q}_{loc}([0,
\infty),L_{2}(\mathfrak{U}, X)))$ and where $C = C(\alpha, q, T)$.

With these definitions and \eqref{eq:BDGfrac} in hand we return
to \eqref{eq:Ito:cutoffLP}.  For any $0 < \alpha < 1/2$, we have
\begin{align}
  \E  \norm{u^{n}}{W^{\alpha,r}([0,T],H^{\mm-1})}^{r} 
  \leq& C\, \E
  	\left\| P_{n}u_{0} + \int_{0}^{t}\theta_{R} (\Vert u^{n} \Vert_{W^{1, \infty}})  P_{n} P(u^{n} \cdot \nabla u^{n}) ds \right\|_{W^{1,r}([0,T], H^{\mm-1})}^{r}\notag\\
	&+ C\, \E\left\| \int_{0}^{t} \theta_{R} (\| u^{n} \|_{W^{1, \infty}}) P_{n} P \sigma(u^{n}) \dW \right\|_{W^{\alpha,r}([0,T],  H^{\mm-1})}^{r} 
	\label{eq:timederivative:1}
\end{align} for some positive constant $C=C(T)$, independent of $n$.
Since $P_{n}P$ is uniformly bounded in $X_{\mm-1}$ independently of $n$, 
using \eqref{eq:CalcInequality} and \eqref{eq:Galerkin:HmBound}
we bound the first term on the right hand side of \eqref{eq:timederivative:1} as
\begin{align}
 \E&
  	\left\| P_{n}u_{0} + \int_{0}^{t}\theta_{R} (\Vert u^{n} \Vert_{W^{1, \infty}})  P_{n} P(u^{n} \cdot \nabla u^{n}) ds \right\|_{W^{1,r}([0,T], H^{\mm-1})}^{r}
	\notag \\
	&\leq
  C\, \E  \|u_{0}\|^r_{H^\mm} +   C\, \E \int_{0}^{T} \theta_{R} (\Vert u^{n} \Vert_{W^{1, \infty}}) \left\| u^{n} \cdot \nabla u^{n}  \right\|_{H^{\mm-1}}^{r}dt \notag\\
   &\leq C\, \E  \|u_{0}\|^r_{H^\mm}  +  C\, \E \int_{0}^{T}\theta_{R} (\Vert u^{n} \Vert_{W^{1, \infty}})  \|  u^{n}  \|_{W^{1, \infty}}^{r} \|  u^{n}  \|_{H^{\mm}}^{r}dt
   \leq C\, \E \left( \sup_{t\in[0,T]} \Vert u^{n}(t) \Vert_{H^{\mm}}^{r}\right)  \leq C
   \label{eq:DetPartBnd}
\end{align}
where the final constant $C =  C (T,R,r,\E\norm{u_{0}}{H^{\mm}}^{r})$ does not depend on $n$.
For the second term on the left hand side of
\eqref{eq:timederivative:1} we make use of \eqref{eq:BDGfrac} with $q = r$ and $\alpha \in (0,1/2)$, then
\eqref{eq:sigMappings} and \eqref{eq:Galerkin:HmBound} to estimate
\begin{align}
   \E\left\| \int_{0}^{t} \theta_{R} (\| u^{n} \|_{W^{1,\infty}}) P_{n} P\sigma(u^{n}) \dW \right\|_{W^{\alpha,r}([0,T],  H^{\mm-1} ) }^{r}
   &\leq C\, \E \left( \int_{0}^{T} \theta_{R} (\| u^{n} \|_{W^{1,\infty}})^r  \| P_{n} P \sigma(u^{n})\|^r_{   \XX_{\mm-1} }dt\right) \notag\\
   &\leq C\, \E \int_{0}^{T} \theta_{R} (\| u^{n} \|_{W^{1,\infty}})^r \locCon(\| u^{n} \|_{L^{\infty}})^r(1+ \| u^{n}\|^{r}_{H^{\mm}})dt \notag\\
   &\leq C \, \E \left( 1 + \sup_{t\in[0,T]} \Vert u^{n}(t) \Vert_{H^{\mm}}^{r}\right) \leq C,
   \label{eq:UnifromStochIntLPBnd}
\end{align}
where in the final constant $C = C (T,R,r, \locCon,\E\norm{u_{0}}{H^{\mm}}^{r})$ 
is a sufficiently large constant independent on $n$.  Combining \eqref{eq:timederivative:1}--\eqref{eq:UnifromStochIntLPBnd} we have now shown that
\begin{align}
  \sup_{n \geq 1}\E \norm{u^{n}}{W^{\alpha,r}([0,T], H^{\mm-1})}^{r} \leq C,
  \label{eq:Finalq=2FracDerivBnd}
\end{align}
for some positive finite constant $C = C (T,R,r,\E\norm{u_{0}}{H^{\mm}}^{r}, \alpha)$. 
In summary, we have proven:
\begin{proposition}
\label{thm:UniformBndsSum}
Fix $m >d/2+1$,  $\mm = m+ 5$, $\alpha \in (0,1/2)$, $r \geq 2$, and suppose that $\sigma$ satisfies conditions 
\eqref{eq:sigMappings}--\eqref{eq:BadNoiseAssumption}.  
Given $u_0 \in L^r(\Omega; X_\mm)$,
$\mathcal{F}_0$ measurable, consider the associated sequence of solutions
$\{u_n\}_{n \geq 1}$ of the Galerkin system \eqref{eq:Galerkin}--\eqref{eq:Galerkin:IC}. Then the sequence
$\{u^n\}_{n \geq 1}$ is uniformly bounded in 
$$
L^r(\Omega; L^\infty([0,T], X_\mm) \cap W^{\alpha, r}(0,T; X_{\mm-1}))
$$  
for any $T > 0$.
Moreover, under the given conditions, we have 
\begin{align}
&\sup_{n\geq 1} \E\left\| \int_{0}^{t} \theta_{R} (\| u^{n} \|_{W^{1,\infty}})P_{n} P\sigma(u^{n}) \dW \right\|_{W^{\alpha,r}([0,T], H^{\mm-1})}^{r}  < \infty
\label{eq:UnifromTdLPBndsPre}\\
 &\sup_{n\geq 1} \E\left\| u^{n}(t) - \int_{0}^{t} \theta_{R} (\| u^{n} \|_{W^{1,\infty}}) P_{n} P\sigma(u^{n})  \dW\right\|_{W^{1,r}([0,T], H^{\mm-1})}^{r} < \infty.
\label{eq:UnifromTdLPBnds}
\end{align}
\end{proposition}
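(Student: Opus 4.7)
The plan is to derive the three bounds by combining an It\^o-based energy estimate in $H^\mm$ with the fractional Sobolev characterization of the stochastic integral. First, I would apply the It\^o formula to $\|u^n\|_{H^\mm}^r$ using equation \eqref{eq:Galerkin} to obtain the expansion \eqref{eq:Ito:cutoffLP}. The drift is controlled via Lemma~\ref{lemma:Leray}(c) with $p = 2$ and $v = u^n$ (inserting the extra factor $\|u^n\|_{H^\mm}^{r-2}$), together with the noise bound from \eqref{eq:BadNoiseAssumption} at level $\mm$; crucially, the cut-off $\theta_R$ caps $\|u^n\|_{W^{1,\infty}}$ at $2R$ on its support, making both $\locCon(\|u^n\|_{L^\infty})$ and the $\|u^n\|_{W^{1,\infty}}$ factor in the commutator estimate bounded by constants that depend only on $R$, not on $n$. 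The stochastic integral term is handled via \eqref{eq:BDG} followed by Cauchy--Schwarz and Young's inequality, which absorbs a term of the form $\tfrac{1}{2}\E\sup_s\|u^n\|_{H^\mm}^r$ into the left-hand side. A standard Gr\"onwall argument then yields \eqref{eq:Galerkin:HmBound}.

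Second, for the fractional time regularity, I would decompose $u^n$ as in \eqref{eq:timederivative:1} into the deterministic drift integral and the stochastic integral. The drift integral is bounded in $W^{1,r}(0,T; H^{\mm-1})$ (and hence in $W^{\alpha,r}(0,T; H^{\mm-1})$ via the continuous embedding $W^{1,r} \hookrightarrow W^{\alpha,r}$ on a bounded interval) by applying the Moser-type estimate \eqref{eq:CalcInequality} to $u^n \cdot \nabla u^n$ in $H^{\mm-1}$, using the cut-off to control the $W^{1,\infty}$-factor and then invoking \eqref{eq:Galerkin:HmBound}, exactly as in \eqref{eq:DetPartBnd}. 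The stochastic part is bounded in $W^{\alpha, r}(0,T; H^{\mm-1})$ directly via the fractional BDG inequality \eqref{eq:BDGfrac} with $X = H^{\mm-1}$ and $q = r$, reducing to an $L^r$-in-time estimate of $\|P_n P \sigma(u^n)\|_{\XX_{\mm-1}}$ which is in turn controlled by \eqref{eq:BadNoiseAssumption} and \eqref{eq:Galerkin:HmBound}. Statements \eqref{eq:UnifromTdLPBndsPre} and \eqref{eq:UnifromTdLPBnds} are then immediate: the former is precisely the stochastic-part estimate, and the latter is the deterministic-part estimate, since their sum equals $u^n$ by \eqref{eq:Galerkin}.

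The principal obstacle is that every constant must be made uniform in $n$. This rests on two ingredients: the projectors $P_n$ are uniformly bounded on both $X_\mm$ and $X_{\mm-1}$ in the orthonormal basis $\{\phi_k\}$ adapted to the inner product of $X_\mm$ introduced in Section~\ref{sec:GalerkinScheme}, and the cut-off $\theta_R$ renders every $W^{1,\infty}$-dependent quantity (both the commutator constant from Lemma~\ref{lemma:Leray} and the Nemytskii-type function $\locCon$) bounded by an $R$-dependent constant independent of $n$. A secondary technical point is that the fractional time regularity is stated at level $H^{\mm-1}$ rather than $H^\mm$ because one spatial derivative is lost to the transport nonlinearity; the noise contribution alone could be estimated at level $H^\mm$ via \eqref{eq:BadNoiseAssumption}, but the weaker statement here is exactly what the subsequent Aubin--Lions-type compactness argument will require, via the compact embedding $X_\mm \hookrightarrow\hookrightarrow X_{\mm-1}$.
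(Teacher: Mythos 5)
Your proposal tracks the paper's proof essentially step by step: the same Itô expansion \eqref{eq:Ito:cutoffLP} for $\|u^n\|_{H^\mm}^r$, the same use of the cutoff $\theta_R$ to render the $\|u^n\|_{W^{1,\infty}}$ commutator factor and $\locCon$ uniformly bounded in $n$, the same BDG/Young absorption of a $\tfrac12 \E\sup \|u^n\|_{H^\mm}^r$ term followed by Gr\"onwall, and the same split of $u^n$ into a drift integral bounded in $W^{1,r}(0,T;H^{\mm-1})$ via \eqref{eq:CalcInequality} and a stochastic integral bounded in $W^{\alpha,r}(0,T;H^{\mm-1})$ via the fractional BDG inequality \eqref{eq:BDGfrac}. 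This is the same argument the paper gives, and your identification of the two uniformity ingredients (uniform boundedness of $P_n$ on $X_\mm, X_{\mm-1}$ and the cutoff) and of why the fractional estimate lives one level down at $H^{\mm-1}$ is accurate.
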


\subsection{Tightness, Compactness and The Existence of Martingale Solutions}
\label{sec:TightnessMgSol}

For a given initial distribution $\mu_0$ on $X_{\mm}$ we fix a stochastic basis
$\mathcal{S} = (\Omega, \mathcal{F},$ $\{\mathcal{F}_t\}_{t \geq
  0}, \mathbb{P}, \WW)$ upon which is defined an $\mathcal{F}_0$ measurable
random element $u_0$ with distribution $\mu_0$.  As described above,
we define  the sequence
of Galerkin approximations $\{u^n\}_{n\geq 1}$ solving \eqref{eq:Galerkin}--\eqref{eq:Galerkin:IC}
relative to this basis and initial condition.

To define a sequence of measures associated with $\{(u^n, \WW)\}_{n\geq 1}$ we consider the phase
space:
\begin{align}
 \mathcal{X} = \mathcal{X}_S \times \mathcal{X}_W,  \quad \textrm{ where } \quad \mathcal{X}_S =  C([0,T], X_{\mm-2}), \quad
  \mathcal{X}_W = C([0,T],\mathfrak{U}_0).
  \label{eq:StrongConvergenceSpace}
\end{align}
We may think of the first component, $\mathcal{X}_S \supset C([0,T], X_{\mm})$,  as
the space where the $u^n$ lives, and the second component, $\mathcal{X}_W$,
as being the space on which the driving Brownian motions are defined. On $\mathcal{X}$ we
define the probability measures
\begin{align}
   \mu^{n} = \mu^{n}_{S} \times \mu_{W}, \quad \textrm{ where } \quad \mu^{n}_{S}(\cdot) = \Prb( u^{n} \in \cdot), \quad \mu_{W}(\cdot) = \Prb( \WW \in \cdot).
   \label{eq:solutionnoiseMeasureDef}
\end{align}

We next show that the  collection $\{\mu^{n}\}_{n\geq 1}$ is in fact \emph{weakly compact}.
Let $Pr(\mathcal{X})$ be the collection of Borel probability measures on $\mathcal{X}$.
Recall that a sequence $\{ \nu_{n} \}_{n \geq 0} \subset Pr(\mathcal{X})$ is said to
\emph{converge weakly} to an element $\nu \in Pr(\mathcal{X})$ if
 $\int f d \nu_n \rightarrow \int f d \nu$
for all continuous bounded $f$ on $\mathcal{X}$.  As such, we say that a set
$\Lambda \subset Pr(\mathcal{X})$ is weakly compact if every
sequence $\{ \nu_n \} \subset \Lambda$ possesses a
weakly convergent subsequence.    On the other hand we say
that a collection $\Lambda \subset Pr(\mathcal{X})$ is
\emph{tight} if, for every $\epsilon > 0$, there exists a compact
set $K_\epsilon \subset \mathcal{X}$ such that,
  $\mu ( K_\epsilon) \geq 1 - \epsilon  \textrm{ for all } \mu \in \Lambda$.
The classical result of Prohorov (see e.g. \cite{ZabczykDaPrato1})
asserts that weak compactness and tightness are in fact equivalent conditions for
collections $\Lambda \subset Pr(\mathcal{X})$.  We have:

\begin{lemma}[\bf Tightness of Measures for the Galerkin Scheme]\label{thm:Tightness}
  Let $m >d/2+1$,  $\mm = m+ 5$, $r > 2$, 
  assume that $\sigma$ satisfies conditions \eqref{eq:sigMappings}--\eqref{eq:BadNoiseAssumption}, and consider any $\mu_0 \in Pr(X_\mm)$
  with $\int_{X_\mm} |u|^r d\mu_0(u) < \infty$.  
 Fix any stochastic basis
$\mathcal{S} = (\Omega, \mathcal{F},$ $\{\mathcal{F}_t\}_{t \geq
  0}, \mathbb{P}, \WW)$ upon which is defined an $\mathcal{F}_0$ measurable
random element $u_0$ with this distribution $\mu_0$ and take
 $\{u^n\}_{n \geq 1}$ to be the sequence solving 
 \eqref{eq:Galerkin}, \eqref{eq:Galerkin:IC} relative to this basis and initial condition.
  Define the sequence $\{\mu^n\}_{n \geq 1}$ according to \eqref{eq:solutionnoiseMeasureDef}
  using the sequence $\{u^n\}_{n \geq 1}$.
  Then $\{ \mu^{n} \}_{n \geq 1} \subset Pr(\mathcal{X})$ is tight and hence weakly compact.
\end{lemma}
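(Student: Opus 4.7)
\medskip

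\noindent\textbf{Proof proposal.}
Since $\mu^{n} = \mu^{n}_{S} \times \mu_{W}$ is a product measure and the noise marginal $\mu_{W}$ does not depend on $n$ (so it is a single, trivially tight measure on $C([0,T],\mathfrak{U}_{0})$), the problem reduces to showing tightness of $\{\mu^{n}_{S}\}_{n \geq 1}$ on $\mathcal{X}_{S} = C([0,T], X_{\mm-2})$. My plan is to identify explicit compact subsets of $\mathcal{X}_{S}$ using the uniform bounds recorded in Proposition~\ref{thm:UniformBndsSum}, and then use Chebyshev's inequality to obtain tightness.

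The key analytic ingredient is the compact embedding
\begin{align*}
  L^{\infty}([0,T]; X_{\mm}) \cap W^{\alpha,r}([0,T]; X_{\mm-1}) \;\hookrightarrow\hookrightarrow\; C([0,T]; X_{\mm-2}),
\end{align*}
which is a standard Aubin--Lions-type result provided that the first embedding in the chain $X_{\mm} \hookrightarrow\hookrightarrow X_{\mm-1} \hookrightarrow X_{\mm-2}$ is compact (which holds by Rellich--Kondrachov since $\mathcal{D}$ is a smooth bounded domain) and provided that $\alpha r > 1$ so that fractional time regularity in $X_{\mm-1}$ combined with the Rellich gain yields H\"older continuity in $X_{\mm-2}$. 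Since the hypothesis furnishes $r > 2$, I choose any $\alpha \in (1/r, 1/2)$ so that $\alpha r > 1$; this is where the strict inequality $r > 2$ in the statement is used.

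Given this, for each $M > 0$ define
\begin{align*}
  K_{M} := \Big\{ v \in \mathcal{X}_{S} \colon \|v\|_{L^{\infty}([0,T]; X_{\mm})} \leq M,\; \|v\|_{W^{\alpha,r}([0,T]; X_{\mm-1})} \leq M \Big\},
\end{align*}
which, by the compact embedding above, is a relatively compact (hence, being closed in $\mathcal{X}_{S}$, compact) subset of $\mathcal{X}_{S}$. By Proposition~\ref{thm:UniformBndsSum} there exists a constant $C > 0$, independent of $n$, such that
\begin{align*}
  \sup_{n \geq 1} \E \left( \|u^{n}\|_{L^{\infty}([0,T]; X_{\mm})}^{r} + \|u^{n}\|_{W^{\alpha,r}([0,T]; X_{\mm-1})}^{r} \right) \leq C.
\end{align*}
Applying Chebyshev's inequality therefore gives
\begin{align*}
  \mu^{n}_{S}(K_{M}^{c}) = \Prb\bigl( u^{n} \notin K_{M} \bigr) \leq \frac{C}{M^{r}},
\end{align*}
uniformly in $n \geq 1$. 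Given any $\epsilon > 0$, choosing $M$ large enough so that $C/M^{r} < \epsilon$ yields $\mu^{n}_{S}(K_{M}) \geq 1 - \epsilon$ for every $n$. This shows that $\{\mu^{n}_{S}\}$ is tight on $\mathcal{X}_{S}$, and combined with the trivial tightness of $\mu_{W}$ on $\mathcal{X}_{W}$ yields tightness of the product family $\{\mu^{n}\}$ on $\mathcal{X}$. Weak compactness then follows from the Prokhorov theorem.

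The only nontrivial step is verifying the Aubin--Lions compact embedding into $C([0,T]; X_{\mm-2})$; all remaining considerations are bookkeeping of the uniform bounds already established. I would invoke the standard reference (e.g.~\cite{FlandoliGatarek1}) for the embedding rather than reprove it.
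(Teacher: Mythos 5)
Your proof is correct and uses the same two key ingredients as the paper (the Flandoli--G\c{a}tarek embedding of Lemma~\ref{thm:FlanGatComp} and Chebyshev), but the construction of the compact sets is a genuinely different variant. The paper decomposes $u^n$ into its Bochner drift and martingale parts and uses Minkowski-sum sets $B_s^2$, the first summand controlled in $W^{1,2}([0,T];X_{\mm-1})$ and the second in $W^{\alpha,r}([0,T];X_{\mm-1})$, invoking the two \emph{separate} uniform bounds \eqref{eq:UnifromTdLPBndsPre}--\eqref{eq:UnifromTdLPBnds}. You instead use the single combined bound $\sup_n \E\|u^n\|^r_{W^{\alpha,r}([0,T];X_{\mm-1})}<\infty$ of \eqref{eq:Finalq=2FracDerivBnd} and work with an intersection-of-balls set $K_M$. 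Both work; yours is arguably the lighter bookkeeping, since a single ball in $W^{\alpha,r}([0,T];X_{\mm-1})$ already has compact closure in $C([0,T],X_{\mm-2})$ by Lemma~\ref{thm:FlanGatComp} --- the $L^\infty([0,T];X_\mm)$ condition you include in $K_M$ is harmless but plays no role in the compactness. One small imprecision worth correcting: you frame the embedding as Aubin--Lions and state that the compactness is supplied by ``the first embedding in the chain $X_\mm \hookrightarrow\hookrightarrow X_{\mm-1}\hookrightarrow X_{\mm-2}$.'' For Lemma~\ref{thm:FlanGatComp} with $Y=X_{\mm-1}$ and $Y^{(0)}=X_{\mm-2}$, what is needed is that $X_{\mm-1}\hookrightarrow\hookrightarrow X_{\mm-2}$ compactly (the gap between the space carrying the fractional-in-time regularity and the sup-norm target), not $X_\mm\hookrightarrow\hookrightarrow X_{\mm-1}$. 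Since both are compact by Rellich, this is only a misattribution, not a gap.
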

In order to obtain the compact sets used to show that the sequence $\{ \mu^{n}\}_{n \geq 1}$ is tight
we use the following variation on the classical Arzela-Ascoli compactness
theorem from \cite{FlandoliGatarek1}.
\begin{lemma}\label{thm:FlanGatComp}
  Suppose that $Y^{(0)} \supset Y$ are Banach spaces with $Y$
    compactly embedded in $Y^{(0)}$.  Let
    $\alpha \in (0,1]$ and $q \in (1,\infty)$ be such that
    $\alpha q > 1$ then
    \begin{equation}\label{eq:fracSobEmbeddingContFns}
      W^{\alpha, q}([0,T]; Y) \subset \subset C([0,T],Y^{(0)})
   \end{equation}
    and the embedding is compact.
\end{lemma}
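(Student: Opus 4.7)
The plan is to combine a fractional Sobolev--Morrey embedding in the time variable with a version of the Arzel\`a--Ascoli theorem that exploits the compact embedding $Y \subset\subset Y^{(0)}$.

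First, I would establish the continuous embedding
$$
W^{\alpha, q}([0,T]; Y) \hookrightarrow C^{0, \alpha - 1/q}([0,T]; Y),
$$
valid precisely under the condition $\alpha q > 1$. Concretely, the goal is an estimate of the form
$$
\|v(t) - v(s)\|_{Y} \leq C\, |t-s|^{\alpha - 1/q}\, \|v\|_{W^{\alpha, q}([0,T]; Y)}, \qquad 0 \leq s < t \leq T.
$$
This is a Garsia--Rodemich--Rumsey-type inequality: for almost every pair $(s,t)$ one writes $v(t) - v(s) = (v(t) - v(\tau)) - (v(s) - v(\tau))$, averages in $\tau$ over an interval around the midpoint of $s$ and $t$ of length comparable to $|t-s|$, and then applies H\"older's inequality together with the very definition of the $W^{\alpha,q}$-seminorm to bound the left side by the right. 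The Banach-valued nature of $Y$ does not interfere here, since all manipulations take place in the time variable.

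Given this embedding, any bounded sequence $\{v_n\}_{n \geq 1} \subset W^{\alpha, q}([0,T]; Y)$ is uniformly bounded in $C^{0, \alpha - 1/q}([0,T]; Y)$. In particular, $\{v_n\}$ is pointwise uniformly bounded in $Y$ and uniformly H\"older continuous with values in $Y$. Since $\| \cdot \|_{Y^{(0)}} \leq C \| \cdot \|_Y$, this H\"older bound transfers to give equicontinuity of $\{v_n\}$ as a family in $C([0,T]; Y^{(0)})$. The compactness of the embedding $Y \subset\subset Y^{(0)}$ then implies that for each fixed $t \in [0,T]$, the set $\{v_n(t) : n \geq 1\}$ is relatively compact in $Y^{(0)}$.

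To conclude, I would invoke the Arzel\`a--Ascoli theorem in its version for continuous functions with values in a metric space (equicontinuity plus pointwise relative compactness yields relative compactness in the uniform topology), producing a subsequence convergent in $C([0,T]; Y^{(0)})$ and establishing the desired compact embedding. The main obstacle is the first step: the fractional Morrey-type embedding requires some care, but it is by now a standard tool in the literature on Banach-valued fractional Sobolev spaces (see e.g.~\cite{FlandoliGatarek1} and references therein).
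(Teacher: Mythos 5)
The paper does not give its own proof of this lemma; it is cited from \cite{FlandoliGatarek1}, and your proposal reproduces the standard argument behind that result: the fractional Morrey embedding $W^{\alpha,q}([0,T];Y)\hookrightarrow C^{0,\alpha-1/q}([0,T];Y)$ followed by the Banach--valued Arzel\`a--Ascoli theorem, using the compact embedding $Y\subset\subset Y^{(0)}$ for pointwise relative compactness and the uniform H\"older bound for equicontinuity.

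One caveat on the first step: your sketch of the Morrey embedding---average $v(t)-v(\tau)$ and $v(s)-v(\tau)$ over $\tau$ in an interval of length $\sim|t-s|$, then apply H\"older and the definition of the Gagliardo seminorm---does not close as stated. The averaging plus H\"older produce a factor $\bigl(\int \|v(t)-v(\tau)\|_Y^q/|t-\tau|^{1+\alpha q}\,d\tau\bigr)^{1/q}$ that still depends on $t$; the Gagliardo seminorm only controls this quantity in $L^q_t$, not pointwise, so a single averaging step gives a H\"older modulus only for almost every pair $(s,t)$ with a $t$-dependent constant. The genuine Garsia--Rodemich--Rumsey argument closes this gap via an iterative telescoping construction (equivalently, its integral formulation with $\Psi(x)=x^q$ and $p(u)=u^{\alpha+1/q}$). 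Since you correctly identify GRR as the mechanism and explicitly defer to the literature, the overall architecture matches the cited source and is sound, but the one-step averaging you describe is not by itself a proof of the embedding.
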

\noindent With this result in hand we proceed to the proof of Lemma~\ref{thm:Tightness}:
\begin{proof}[Proof of Lemma~\ref{thm:Tightness}]
Fix any $\alpha \in (0, 1/2)$ such that $\alpha r > 1$. 
According to Lemma~\ref{thm:FlanGatComp} we have
that both $W^{1,2}([0,T]; X_{\mm-1})$, $W^{r, \alpha}([0,T]; X_{\mm})$
are compactly embedded in
$\mathcal{X}_{S}$.  
Therefore, for $s > 0$, the sets 
\begin{align*}
  B^2_s :=
  \left\{
   u \in W^{1,2}([0,T]; X_{\mm-1}) : \|u\|_{W^{1,2}([0,T]; H^{\mm-1})} \leq s
  \right\}
  +
  \left\{
     u \in W^{\alpha,r}([0,T]; X_{\mm}) : \|u\|_{W^{\alpha,r}([0,T]; H^{\mm-1})} \leq s
  \right\}
\end{align*}
are pre-compact in $\mathcal{X}_{S}$.
Since  $\{ u^n \in B^2_s\}$ contains
\begin{align*}
  &\left\{ \left\| u^n(t) - \int_0^t  \locCon(\| u^{n} \|_{W^{m,p}})P_n P\sigma(u^n)  \dW \right\|_{W^{1,2}([0,T]; H^{\mm-1})} \leq s \right\}\\
	&\quad \quad \quad \cap\left\{ \left\| \int_0^t  \locCon(\| u^{n} \|_{W^{m,p}}) P_n P\sigma(u^n) \dW \right\|_{W^{\alpha,r}([0,T]; H^{\mm-1})} \leq s\right\},
\end{align*}
and using Proposition~\ref{thm:UniformBndsSum}, estimates \eqref{eq:UnifromTdLPBndsPre}--\eqref{eq:UnifromTdLPBnds},
and the Chebyshev inequality we bound
\begin{align*}
  \mu_S^n((B_s^2)^C) \leq&
  \Prb \left(
  	\left\| u^n(t) - \int_0^t \theta_R(\| u^{n} \|_{W^{1,\infty}}) P_n P\sigma(u^n)  \dW \right\|_{W^{1,2}([0,T]; H^{\mm-1})} \! \! > s
  \right) \\
  &+
  \Prb \left(
  	\left\| \int_0^t  \theta_R(\| u^{n} \|_{W^{1, \infty}}) P_n P\sigma(u^n) \dW \right\|_{W^{\alpha,r}([0,T]; H^{\mm-1})} \! \! > s
  \right) \leq \frac{C}{s},
\end{align*}
where $C$ is a universal constant independent of $s$ and $n$.
We infer that $\mu^n_S$ is a tight sequence on $\mathcal{X}$.
Now, since the sequence
$\{\mu_W\}$ is constant, it is trivially weakly compact and hence tight.  We
may thus finally infer that the $\{\mu^n\}$ is tight, completing the
proof.
\end{proof}

With this weak compactness in hand we next apply
the Skorokhod embedding theorem (cf.~\cite{ZabczykDaPrato1}) to a weakly
convergent subsequence of $\{\mu^n\}_{n \geq 1}$.  We obtain a new probability
space $(\tilde{\Omega}, \tilde{\mathcal{F}}, \tilde{\Prb})$ on which we have a sequence
of random elements $\{(\tilde{u}^n, \tilde{ \WW}^n)\}_{n \geq 1}$ converging almost surely in $\mathcal{X}$ to an
element $(\tilde{u}, \tilde{\WW})$, i.e.
\begin{align}
	\tilde{u}^n \rightarrow \tilde{u}, \quad  \textrm{ in } C([0,T], X_{\mm-2}) \textrm{ almost surely}
	\label{eq:AlmostSureConvSk1}
\end{align}
and
\begin{align}
	\tilde{\WW}^n \rightarrow \tilde{ \WW }, \quad  \textrm{ in } C([0,T], \mathfrak{U}_0) \textrm{ almost surely}.
	\label{eq:AlmostSureConvSk2}
\end{align}
One may verify as in \cite{Bensoussan1} that
$(\tilde{u}^n, \tilde{\WW}^n)$ satisfies the $n$th order
Galerkin approximation \eqref{eq:Galerkin}--\eqref{eq:Galerkin:IC}
relative to the stochastic basis
$\mathcal{S}^n := (\tilde{\Omega}, \tilde{\mathcal{F}}, \tilde{\Prb}, \{\tilde{\mathcal{F}}^n_t\}, \tilde{ \WW}^n)$
with $\tilde{\mathcal{F}}^n_t$ the completion of the $\sigma$-algebra generated by 
$\{ (u^n(s), \WW^n(s)) : s \leq t\}$.  Using the uniform
bound \eqref{eq:Galerkin:HmBound} and the almost sure convergences 
\eqref{eq:AlmostSureConvSk1}--\eqref{eq:AlmostSureConvSk2}
we may now show that
$(\tilde{u}, \tilde{W})$ solves the the cut-off system
\begin{equation}\label{eq:LimSys:EulerCutOff}
\begin{split}
  &d\tilde{u} + \theta_{R} (\Vert \tilde{u} \Vert_{W^{1, \infty}}) P \left( \tilde{u} \cdot \nabla \tilde{u}  \right) dt = 
   \theta_{R} (\| \tilde{u} \|_{W^{1,\infty}}) P\sigma(\tilde{u})  d\tilde{\WW}.
  \end{split}
\end{equation}
For the technical details of this passage to the limit we refer to e.g.~\cite{DebusscheGlattHoltzTemam1} where this analysis is carried out for the primitive equations. Applying these arguments to the Euler equations introduces no additional difficulties, so we omit further details.
More precisely we have established the following:
\begin{proposition}\label{thm:MGSolCutoff}
Fix any $\mm >d/2 +3 $, $r > 2$, and $R > 0$.  Suppose that $\mu_0 \in Pr(X_\mm)$ is given such that $\int_{X_\mm} \|u\|_{H^\mm}^r d\mu_0(u) < \infty$.
Then there exists a stochastic basis $\mathcal{S} := (\tilde{\Omega}, \tilde{\mathcal{F}}, \tilde{\Prb}, \{\tilde{\mathcal{F}}_t\}, \tilde{ \WW})$
and an $X_\mm$ valued, predictable process 
$$\tilde{u} \in L^2(\Omega; L^\infty_{loc}([0, \infty); X_\mm)) \cap L^2(\Omega; C([0,\infty), X_{\mm -2}))$$
with 
$\tilde{\Prb}(\tilde{u}(0) \in \cdot) = \Prb(u_0 \in \cdot)$ 
such that
$$
  \tilde{u}(t) +  \int_0^t\theta_{R} (\Vert \tilde{u} \Vert_{W^{1, \infty}}) P\left( \tilde{u} \cdot \nabla \tilde{u}\right) dt = \tilde{u}(0) + \int_0^t \theta_{R} (\| \tilde{u} \|_{W^{1,\infty}})  P\sigma(\tilde{u})  d\tilde{\WW}
$$
for every $t \geq 0$.
\end{proposition}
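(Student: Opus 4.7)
The plan is to execute the compactness / martingale-solution scheme already laid out in Sections~\ref{sec:GalerkinScheme}--\ref{sec:TightnessMgSol}: combine Proposition~\ref{thm:UniformBndsSum} and Lemma~\ref{thm:Tightness} to extract a weakly convergent subsequence of the laws $\mu^n$, invoke the Skorokhod embedding theorem to upgrade to almost sure convergence on a new stochastic basis, and then pass to the limit in the Galerkin equation \eqref{eq:Galerkin}--\eqref{eq:Galerkin:IC}. Structurally this parallels the martingale-solution constructions for the stochastic Navier-Stokes \cite{FlandoliGatarek1} and primitive \cite{DebusscheGlattHoltzTemam1} equations, with the cut-off factor $\theta_R$ playing a helpful role in taming the nonlinear and stochastic terms.

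Concretely, I would first fix an auxiliary stochastic basis $\mathcal{S}_0 = (\Omega, \mathcal{F}, \Prb, \{\mathcal{F}_t\}, \WW)$ carrying an $\mathcal{F}_0$-measurable $u_0$ with distribution $\mu_0$, and construct the Galerkin solutions $\{u^n\}$. Applying Prokhorov to the tight family $\{\mu^n\}$ and then Skorokhod yields $(\tilde{\Omega}, \tilde{\mathcal{F}}, \tilde{\Prb})$ carrying $(\tilde{u}^n, \tilde{\WW}^n)$, equal in joint law to $(u^n, \WW)$, and converging almost surely in $\mathcal{X} = C([0,T]; X_{\mm-2}) \times C([0,T]; \UU_0)$ to a limit $(\tilde{u}, \tilde{\WW})$. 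Equality in law transfers \eqref{eq:Galerkin:HmBound} to each $\tilde{u}^n$, so weak-$*$ compactness in $L^r(\tilde{\Omega}; L^\infty([0,T]; X_\mm))$ together with Fatou yields $\tilde{u} \in L^r(\tilde{\Omega}; L^\infty([0,T]; X_\mm))$; a diagonal argument in $T$ extends this to $L^\infty_{loc}([0,\infty); X_\mm)$, while almost sure continuity into $X_{\mm-2}$ is inherited from $\mathcal{X}_S$. Setting $\tilde{\mathcal{F}}_t^n$ and $\tilde{\mathcal{F}}_t$ to be the completed natural filtrations of $(\tilde{u}^n, \tilde{\WW}^n)$ and $(\tilde{u}, \tilde{\WW})$ respectively, a standard martingale characterization (cf.~\cite{Bensoussan1}) ensures that $\tilde{\WW}^n$ is a cylindrical Brownian motion for $\{\tilde{\mathcal{F}}_t^n\}$, that $(\tilde{u}^n, \tilde{\WW}^n)$ satisfies the $n$-th Galerkin system on the new basis, and analogously for $(\tilde{u}, \tilde{\WW})$.

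The remaining task, and the main technical obstacle, is to pass to the limit in the equation. For the drift, strong convergence of $\tilde{u}^n \to \tilde{u}$ in $C([0,T]; X_{\mm-2})$ together with the embedding $X_{\mm-2} \hookrightarrow W^{1,\infty}$ (valid since $\mm-2 > d/2 + 1$) gives $\theta_R(\|\tilde{u}^n\|_{W^{1,\infty}}) \to \theta_R(\|\tilde{u}\|_{W^{1,\infty}})$ almost surely in $t$, and the algebra estimate \eqref{eq:CalcInequality} applied at level $\mm-3$ combined with uniform boundedness and strong convergence $P_n \to I$ yields
\begin{equation*}
\theta_R(\|\tilde{u}^n\|_{W^{1,\infty}}) P_n P(\tilde{u}^n \cdot \nabla \tilde{u}^n) \longrightarrow \theta_R(\|\tilde{u}\|_{W^{1,\infty}}) P(\tilde{u} \cdot \nabla \tilde{u})
\end{equation*}
in $L^2(\tilde{\Omega}; L^2(0,T; X_{\mm-3}))$. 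The most delicate step is the stochastic integral, since both the integrand and the integrator are changing with $n$. Here I would follow the abstract convergence lemma for stochastic integrals used in \cite{DebusscheGlattHoltzTemam1}: the locally Lipschitz condition \eqref{eq:sigMappings}, together with the $L^\infty$ control provided by $\theta_R$ (which effectively bounds $\locCon$), allows one to deduce convergence of $\theta_R(\|\tilde{u}^n\|_{W^{1,\infty}}) P_n P\sigma(\tilde{u}^n) \to \theta_R(\|\tilde{u}\|_{W^{1,\infty}}) P\sigma(\tilde{u})$ in $L^2(\tilde{\Omega}; L^2(0,T; \XX_{\mm-2}))$, which combined with the almost sure convergence $\tilde{\WW}^n \to \tilde{\WW}$ in $C([0,T]; \UU_0)$ passes through the stochastic integral. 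Finally, the initial condition is preserved because $\tilde{u}^n(0) \overset{d}{=} P_n u_0 \to u_0$ in $X_\mm$, which together with the almost sure convergence $\tilde{u}^n(0) \to \tilde{u}(0)$ in $X_{\mm-2}$ gives $\tilde{\Prb}(\tilde{u}(0) \in \cdot) = \mu_0$. Passing to the limit along a subsequence in \eqref{eq:Galerkin} then yields \eqref{eq:LimSys:EulerCutOff}, completing the proof.
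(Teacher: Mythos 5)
Your proposal follows essentially the same route as the paper: uniform Galerkin estimates (Proposition~\ref{thm:UniformBndsSum}), tightness (Lemma~\ref{thm:Tightness}), Prokhorov and Skorokhod to obtain the new basis with a.s.\ convergence in $C([0,T];X_{\mm-2})\times C([0,T];\UU_0)$, and passage to the limit in the cut-off system. The paper itself largely defers the limit-passage details to \cite{DebusscheGlattHoltzTemam1}; your version simply makes the drift, stochastic-integral, and initial-data arguments explicit in the same spirit, so there is no substantive difference in method.
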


\begin{remark}\label{rmk:MartingaleSoln}
The assumption $\mm> d/2+3$ is needed facilitate the passage
from \eqref{eq:Galerkin} to \eqref{eq:LimSys:EulerCutOff}. Indeed, when passing to the limit we need to 
handle some stray terms arising due to the cut-off terms involving the $W^{1, \infty}$ norm of the
solution. These stray terms are of higher order than the other terms in the estimates, 
and in order to deal with them we need to have compactness in sufficiently regular spaces.  In the analysis above this
compactness is provided by the Arzela-Ascoli type result, Lemma~\ref{thm:FlanGatComp}.
In order to apply this lemma we need estimates on
(fractional) time derivatives of $u^{n}$, which in view of \eqref{eq:DetPartBnd}
must be made in $X_{\mm-1}$.  An additional degree of regularity is then lost
in order to obtain a compact embedding in $X_{\mm-1}$, as required by Lemma~\ref{thm:FlanGatComp},
and we therefore arrive at the condition $\mm > d/2 + 3$.
\end{remark}

We also observe that Proposition~\ref{thm:MGSolCutoff} immediately yield new results
on the existence of martingale solutions of the stochastic Euler equation.
\begin{remark}[\bf Existence of Martingale Solutions]
We may show that the pair $(\tilde{u}, \tilde{\mathcal{S}})$, obtained from Proposition~\ref{thm:MGSolCutoff} is a local martingale solution  of
\eqref{eq:E:1}--\eqref{eq:E:3} by introducing the stopping time
$$
\tau = \inf \{t \geq 0 \colon \|\tilde{u}\|_{W^{1,\infty}} \geq  R\}.
$$   
Of course, unless $\|\tilde{u}(0) \|_{W^{1,\infty}} < R$, i.e. unless
$\mu_0 ( \{u_0 \in X_{m'}: \|u_0\|_{W^{1,\infty}} < R \}) = 1$,
we have  $\tilde{P} (\tau = 0) > 0$.
Such stopping times $\tau$ will also be used further on to infer the existence 
of solutions in the pathwise case.  Note however that in this case the  
$L^\infty(\Omega)$ condition may be subsequently removed 
 with a cutting argument, cf. \eqref{eq:cutThatBitchUpSol}--\eqref{eq:cutThatBitchUpST} below.

\end{remark}

\subsection{Uniqueness, the Gy\"ongy- Krylov lemma, and the existence of strong solutions}
Having now established Proposition~\ref{thm:MGSolCutoff}, and 
guided by the classical Yamada-Wannabe  theorem (see \cite{WatanabeYamada1971a},
\cite{WatanabeYamada1971b}), we would now expect pathwise
solutions to exist once we establish that solutions are ``pathwise unique''.
\begin{proposition}[\bf Pathwise uniqueness] \label{thm:Uniqueness}
  Fix any $r > 2$, $R >0$, and $\mm =m+5$, where $p\geq 2$ and $m>d/p+1$.
  Assume that $\sigma$ satisfies \eqref{eq:sigMappings}--\eqref{eq:BadNoiseAssumption},
  and suppose $(\mathcal{S}, u^{(1)})$ and $(\mathcal{S}, u^{(2)})$ are two global
  solutions of \eqref{eq:LimSys:EulerCutOff} in the sense
  of Proposition~\ref{thm:MGSolCutoff}, relative to the same stochastic basis
  $\mathcal{S} := (\Omega, \mathcal{F}, \{\mathcal{F}_t\}_{t \geq
    0},$ $\mathbb{P}, \WW)$.
    If $u^{(1)}(0) = u^{(2)}(0) = u_{0}$, $a.s$, with $\E \|u_0\|_{H^\mm}^r < \infty$,
    then $u^{(1)}$ and $u^{(2)}$ are \emph{indistinguishable} i.e.
   \begin{equation}\label{eq:uniquenessCutoffSolns1}
    \Prb \left(  u^{(1)}(t)  = u^{(2)}(t);   \forall t
      \geq 0 \right) = 1.
  \end{equation}
\end{proposition}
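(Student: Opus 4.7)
The plan is to subtract the two equations, apply the $L^p$-valued It\=o formula to $\|w(t)\|_{W^{m,p}}^p$ for $w := u^{(1)} - u^{(2)}$, and close a Gronwall estimate after localizing the $H^\mm$ regularity of each solution via a stopping time. The gap $\mm - m = 5$ is what makes the closure possible: it guarantees with room to spare both Sobolev embeddings $H^\mm \hookrightarrow W^{m+1,p}$ and $W^{m,p} \hookrightarrow W^{1,\infty}$ (the latter from $m > d/p + 1$) that the estimates below rely on.

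Concretely, I would introduce the stopping times
$$
\tau_M := \inf\bigl\{ t \geq 0 \colon \|u^{(1)}(t)\|_{H^\mm} \vee \|u^{(2)}(t)\|_{H^\mm} \geq M \bigr\},
$$
and observe that $\tau_M \to \infty$ almost surely as $M \to \infty$, thanks to the regularity afforded by Proposition~\ref{thm:MGSolCutoff}. Setting $F_i := \theta_R(\|u^{(i)}\|_{W^{1,\infty}})$ and subtracting, $w$ satisfies
$$
dw + \bigl[ F_1 \bigl(P(w \cdot \nabla u^{(1)}) + P(u^{(2)} \cdot \nabla w)\bigr) + (F_1 - F_2) P(u^{(2)} \cdot \nabla u^{(2)}) \bigr] dt = \bigl[ F_1 \bigl(P\sigma(u^{(1)}) - P\sigma(u^{(2)})\bigr) + (F_1 - F_2) P \sigma(u^{(2)}) \bigr] d\WW.
$$

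Applying the It\=o formula to $\|w\|_{W^{m,p}}^p$ as in Section~\ref{sec:LpEst}, the four resulting pieces are controlled as follows: the $F_1$-scaled transport term $\sum_{|\alpha|\leq m}(\partial^\alpha P(u^{(2)} \cdot \nabla w), \partial^\alpha w |\partial^\alpha w|^{p-2})$ is handled by the cancellation of Lemma~\ref{lemma:Leray}(c), which applies since $w \in W^{m+1,p}$ through $H^\mm \hookrightarrow W^{m+1,p}$; the analogous $F_1$-scaled term involving $P(w \cdot \nabla u^{(1)})$ is bounded via the Moser estimate \eqref{eq:CalcInequality}, producing the factor $\|u^{(1)}\|_{W^{m+1,p}} \leq C\|u^{(1)}\|_{H^\mm} \leq CM$ on $[0,\tau_M]$; for every cut-off difference one uses $|F_1 - F_2| \leq C\|w\|_{W^{1,\infty}} \leq C\|w\|_{W^{m,p}}$ multiplied by the already-bounded quantities $\|P(u^{(2)}\cdot\nabla u^{(2)})\|_{W^{m,p}}$ and $\|P\sigma(u^{(2)})\|_{\XX_{m,p}}$; and the $F_1$-scaled noise difference is tamed directly by the Lipschitz hypothesis \eqref{eq:sigMappings}, which yields $\|P\sigma(u^{(1)}) - P\sigma(u^{(2)})\|_{\XX_{m,p}} \leq C\locCon \|w\|_{W^{m,p}}$ aligned exactly with the working norm. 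After applying the Burkholder-Davis-Gundy inequality \eqref{eq:BDG} to the stochastic integral and absorbing $\tfrac12 \E \sup_{[0, t\wedge\tau_M]} \|w\|_{W^{m,p}}^p$ to the left, one obtains
$$
\E \sup_{s \in [0, t \wedge \tau_M]} \|w(s)\|_{W^{m,p}}^p \leq C_{M,R,\locCon,\DD,m,p} \int_0^t \E \sup_{r \in [0, s \wedge \tau_M]} \|w(r)\|_{W^{m,p}}^p \, ds,
$$
and the classical Gronwall inequality together with $w(0) = 0$ forces $w \equiv 0$ on $[0, \tau_M]$ almost surely. Sending $M \to \infty$ yields \eqref{eq:uniquenessCutoffSolns1}.

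The main obstacle is the delicate choice of working norm. An $L^2$-based estimate fails because $|F_1 - F_2|$ introduces $\|w\|_{W^{1,\infty}}$ factors that cannot be absorbed into $\|w\|_{L^2}$; an $H^\mm$-based estimate fails because the top-derivative Moser bound on $P(w\cdot\nabla u^{(1)})$ would require $\|u^{(1)}\|_{H^{\mm+1}}$, an excessive derivative not furnished by the a priori bounds. The space $W^{m,p}$ is the sweet spot: it matches exactly the Lipschitz hypothesis on $\sigma$ from \eqref{eq:sigMappings}, embeds into $W^{1,\infty}$ to absorb the cut-off Lipschitz factor, and loses only one derivative on $u^{(1)}$ in the nonlinear estimate, which the gap $\mm - m = 5$ comfortably accommodates via $H^\mm \hookrightarrow W^{m+1,p}$.
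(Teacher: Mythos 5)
Your overall strategy coincides with the paper's: both subtract the equations for $u^{(1)}$ and $u^{(2)}$, apply the $L^p$ It\=o formula to $\|u^{(1)}-u^{(2)}\|_{W^{m,p}}^p$, control the cut-off differences via the mean value theorem and the embedding $W^{m,p}\hookrightarrow W^{1,\infty}$, the transport terms via Lemma~\ref{lemma:Leray}, the noise difference via the Lipschitz hypothesis \eqref{eq:sigMappings}, and close with Burkholder--Davis--Gundy and Gr\"onwall after localizing. Your explanation of why $W^{m,p}$ is the right working norm also mirrors the paper's discussion. The one step that would fail as written is the localization.

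You propose the stopping time $\tau_M := \inf\{t \geq 0 : \|u^{(1)}(t)\|_{H^{\mm}} \vee \|u^{(2)}(t)\|_{H^{\mm}} \geq M\}$. However, Proposition~\ref{thm:MGSolCutoff} only guarantees that $u^{(i)} \in L^2(\Omega;L^\infty_{\rm loc}([0,\infty);X_{\mm})) \cap L^2(\Omega;C([0,\infty),X_{\mm-2}))$: the paths are essentially bounded in $H^{\mm}$ but are \emph{not} asserted to be continuous in $H^{\mm}$, so $t\mapsto \|u^{(i)}(t)\|_{H^{\mm}}$ need not be a.s.\ right-continuous (or even everywhere defined), and $\tau_M$ is not obviously a well-defined stopping time. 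This is exactly the subtlety the paper flags before defining the stopping times: since continuous paths are available only in $H^{\mm-2}$, the localization is taken relative to the $W^{m+1,p}$ norm, $\xi^K := \inf\{t : \|u^{(1)}\|_{W^{m+1,p}}^2 + \|u^{(2)}\|_{W^{m+1,p}}^2 > K\}$, which is legitimate because $H^{\mm-2} \hookrightarrow W^{m+1,p}$. The fact that $\xi^K \to \infty$ a.s.\ then follows from the $L^\infty_t H^{\mm}$ bound and the embedding $H^{\mm}\hookrightarrow W^{m+1,p}$. This is the only quantity your estimate actually needs to be stopped anyway (the Moser bound consumes $\|u^{(1)}\|_{W^{m+1,p}}$, not $\|u^{(1)}\|_{H^{\mm}}$), so the repair is to switch the norm in your stopping time; the rest of your argument then goes through unchanged.
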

\begin{proof}[Proof of Proposition~\ref{thm:Uniqueness}]
By the assumption on $u_0$ and Proposition~\ref{thm:MGSolCutoff},
we have for every $T > 0$
\begin{align}
 \E \left( \sup_{t \in [0,T]} (\|u^{(1)}\|^{2}_{H^{\mm}}  + \|u^{(2)}\|^{2}_{H^{\mm}})  \right) \leq C < \infty, \label{eq:TheDude}
\end{align}
where $C$ is a universal constant depending only on $\E \|u_{0}\|^{2}_{H^\mm}$, $R$, $\locCon$, and $T$. 
However, continuity in time is only guaranteed for the $H^{\mm-2}$ norms of $u^{(1)}$ and $u^{(2)}$, 
and so, in view of the choice of $\mm$, we may define the collection of stopping times
$$
\xi^{K} := \inf_{t \geq 0} \left\{ t \colon \|u^{(1)}\|_{W^{m+1,p}}^{2} + \|u^{(2)}\|_{W^{m+1,p}}^{2}> K \right\}, \quad K > 0.
$$
Observe that due to \eqref{eq:TheDude} we have $\xi^{K} \rightarrow \infty$ almost surely as $K \rightarrow \infty$.

Take $v = u^{(1)} - u^{(2)}$.  We have
\begin{align*}
  dv + \theta_{R} (\Vert u^{(1)} \Vert_{W^{1, \infty}}) P \left(u^{(1)} \cdot \nabla u^{(1)} \right) dt -
  &\theta_{R} (\Vert u^{(2)} \Vert_{W^{1, \infty}}) P \left(u^{(2)} \cdot \nabla u^{(2)} \right)dt \\
  &= \left( \theta_{R} (\| u^{(1)} \|_{W^{1,\infty}})P\sigma(u^{(1)})
     -  \theta_{R} (\| u^{(2)} \|_{W^{1,\infty}}) P\sigma(u^{(2)}) \right) \dW.
\end{align*}
We now estimate $v$ in $W^{m,p}$.  For any multi-index $|\alpha| \leq m$ we apply $\partial^{\alpha}$ to the equation
for $v$. With the It\={o} lemma in $L^{p}$ we find 
\begin{align*}
  d \|\partial^{\alpha}v\|^p_{L^p}
  =& -
   p \int_{\DD}
    \partial^\alpha v  \cdot
     \left( \theta_{R} (\Vert u^{(1)} \Vert_{W^{1, \infty}}) \partial^\alpha P(u^{(1)} \cdot \nabla u^{(1)}) -
  \theta_{R} (\Vert u^{(2)} \Vert_{W^{1, \infty}}) \partial^\alpha P(u^{(2)} \cdot \nabla u^{(2)}) \right) |\partial^{\alpha} v|^{p -2}dx dt
      \notag \\
      &+ \sum_{k \geq 1}\int_{\DD}   \biggl( \frac{p}{2}
       |\partial^\alpha P(\theta_{R} (\| u^{(1)} \|_{W^{m,p}})\sigma_{k}(u^{(1)}) -  \theta_{R} (\| u^{(2)} \|_{W^{m,p}})\sigma_{k}(u^{(2)}))|^2 |\partial^{\alpha} v|^{p -2} \notag\\
       &\qquad \quad+ \frac{p(p-2)}{2} (\partial^\alpha v  \cdot P (\theta_{R} (\| u^{(1)} \|_{W^{m,p}})\sigma_{k}(u^{(1)}) -  \theta_{R} (\| u^{(2)} \|_{W^{m,p}})\sigma_{k}(u^{(2)})))^{2} |\partial^{\alpha} v|^{p -4}
               \biggr)
       \, dx  dt \notag \\
     &+ p \sum_{k \geq 1} \left(
       \int_{\DD}   \partial^\alpha v  \cdot
                       \partial^\alpha P(\theta_{R} (\| u^{(1)} \|_{W^{1,\infty}})\sigma_{k}(u^{(1)}) -  \theta_{R} (\| u^{(2)} \|_{W^{1,\infty}})\sigma_{k}(u^{(2)})) |\partial^{\alpha} v|^{p -2}\,dx
       \right) dW_k \notag \\
       :=& (J_1^{\alpha} + J_2^{\alpha}) dt + J_3^{\alpha} {\dW}.
\end{align*}
Using the mean value theorem for $\theta_R$, the embedding $W^{1, \infty} \subset W^{m,p}$, and Lemma~\ref{lemma:Leray} we estimate $J_1^\alpha$ as
\begin{align}\label{eq:J1Est}
    |J_1^\alpha| 
    \leq&C  \left| \theta_{R} ( \Vert u^{(1)} \Vert_{W^{1, \infty}}) - \theta_{R} (\Vert u^{(2)} \Vert_{W^{1, \infty}}) \right|  \;
    	 \left| \left( \partial^\alpha P(u^{(1)} \cdot \nabla u^{(1)}), \partial^\alpha v  | \partial^\alpha v|^{p-2}\right) \right| \notag\\
	 & \qquad \qquad + C  \left| \left(\partial^\alpha P(u^{(1)} \cdot \nabla u^{(1)})
    - \partial^\alpha P (u^{(2)} \cdot \nabla u^{(2)}), \partial^\alpha v | \partial^\alpha v|^{p-2} \right) \right|  \notag \\
      \leq& C
      \left|
      \Vert u^{(1)} \Vert_{W^{1, \infty}} - \Vert u^{(2)} \Vert_{W^{1, \infty}}
      \right|
      \|P(u^{(1)} \cdot \nabla u^{(1)})\|_{W^{m,p}}
      \|v\|_{W^{m,p}}^{p-1} \notag \\
       & \qquad \qquad + C \left| \left(\partial^\alpha P (v \cdot \nabla u^{(1)}), \partial^\alpha v | \partial^\alpha v|^{p-2} \right)  \right| +C
      \left| \left( \partial^\alpha P(u^{(2)} \cdot \nabla v), \partial^\alpha v | \partial^\alpha v|^{p-2} \right) \right|  \notag \\
      \leq& C\|v\|_{W^{m,p}}^{p} \|u^{(1)}\|_{W^{m,p}}\|u^{(1)}\|_{W^{m+1,p}}+
      	        \|v\|_{W^{m,p}}^{p-1} \left(\|v\|_{L^{\infty}} \|u^{(1)}\|_{W^{m+1,p}} + \|u^{(1)}\|_{W^{1,\infty}} \|v\|_{W^{m,p}}\right) \notag \\
	        	&\qquad \qquad +C  \|v\|_{W^{m,p}}^{p-1}
		\left(  \| u^{(2)} \|_{W^{m,p}} \|v \|_{W^{1,\infty}} +
                 \| u^{(2)} \|_{W^{1,\infty}} \| v \|_{W^{m,p}} \right) \notag \\
       \leq& C \|v\|_{W^{m,p}}^{p} \left( (1 +  \|u^{(1)}\|_{W^{m,p}} )\|u^{(1)}\|_{W^{m+1,p}}  + \|u^{(2)}\|_{W^{m,p}}\right) .
\end{align}
Using the local Lipschitz condition on $\sigma$, i.e. \eqref{eq:sigMappings},
we have
\begin{align}
  |J_{2}^\alpha| 
  \leq&  C\|v\|_{W^{m,p}}^{p-2} \| \theta_{R} (\| u^{(1)} \|_{W^{1,\infty}})\sigma(u^{(1)}) -  \theta_{R} (\| u^{(2)} \|_{W^{1,\infty}})\sigma(u^{(2)})\|_{\WB^{m,p}}^2
  \notag \\
  \leq&  C\|v\|_{W^{m,p}}^{p-2} \left(  \theta_{R} (\| u^{(1)} \|_{W^{1,\infty}})^2  \|\sigma(u^{(1)}) -  \sigma(u^{(2)})\|_{\WB^{m,p}}^2 +
                                              \left| (\theta_{R} (\| u^{(1)} \|_{W^{1,\infty}}) -  \theta_{R} (\| u^{(2)} \|_{W^{1,\infty}}) \right|^2 \|\sigma(u^{(2)})\|_{\WB^{m,p}}^2
                                              \right)
  \notag \\
  \leq&  C \locCon(\| u^{(1)} \|_{L^{\infty}} + \| u^{(2)} \|_{L^{\infty}})^2 ( 1 + \|u^{(2)}\|_{W^{m,p}}^2) \|v\|_{W^{m,p}}^{p}                                           
  \label{eq:J3Est}.
\end{align}
For the terms involving $J_{3}^\alpha$ we make use of the Burkholder-Davis-Gundy inequality in a similar way to \eqref{eq:MFJonesConsultsForBDG} 
and then argue as in \eqref{eq:J3Est} in order
to finally estimate, that for every $t \geq 0$ 
\begin{align}
  \E &\sup_{s \in [0,t]} \left| \int_{0}^{s \wedge \xi^{K}} J_{3}^\alpha {\dW}\right| \notag\\
  &\leq C\E \left( \int_{0}^{t \wedge \xi^{K}} \sum_{k \geq 1} \left(
       \int_{\DD}   \partial^\alpha v  \cdot
                       \partial^\alpha P(( \theta_{R} (\| u^{(1)} \|_{W^{1,\infty}})\sigma_k(u^{(1)}) -  \theta_{R} (\| u^{(2)} \|_{W^{1,\infty}})\sigma_k(u^{(2)})) |\partial^{\alpha} v|^{p -2}\,dx
       \right)^{2}ds
	\right)^{1/2} \notag \\
  &\leq C\E \left( \int_{0}^{t \wedge \xi^{K}}  \|\partial^\alpha v \|_{L^p}^{2(p-1)} 
                 \| \theta_{R} (\| u^{(1)} \|_{W^{1,\infty}})\sigma(u^{(1)}) -  \theta_{R} (\| u^{(2)} \|_{W^{1,\infty}})\sigma(u^{(2)})\|_{\WB^{m,p}}^2 ds
	\right)^{1/2} \notag \\	
    & \leq \frac{1}{2} \E \sup_{s \in [0,t \wedge \xi^{K}]}\|\partial^{\alpha} v\|_{L^p}^{p} +   C \E  \int_{0}^{t \wedge \xi^{K}} \|v\|_{W^{m,p}}^{p} ds.\label{eq:J4Est}
\end{align}
We now combine the estimates obtained in \eqref{eq:J1Est}--\eqref{eq:J4Est} and sum over all $\alpha$ with $|\alpha| \leq m $. We find that for any fixed $K >0$
\begin{align*}
    \E \sup_{s \in [0,t \wedge \xi^{K}]}\|v\|_{W^{m,p}}^{p}
    \leq& C \E  \int_{0}^{t \wedge \xi^{K}} \|v\|_{W^{m,p}}^{p}
     (\locCon(\| u^{(1)} \|_{L^\infty} + \| u^{(2)} \|_{L^\infty})^2 + 1)\left(1 +  \|u^{(1)}\|_{W^{m+1,p}}^{2} + \|u^{(2)}\|_{W^{m,p}}^2\right) ds \notag\\
    \leq& C \int_{0}^{t} \E \sup_{r \in [0,s \wedge \xi^{K}]}  \|v\|_{W^{m,p}}^{p} ds
\end{align*}
where the constant $C$ may depend on $K$ via the definition of the stopping time $\xi_{K}$.
By a classical version of the Gr\"onwall lemma, the monotone convergence theorem and the fact
that $\xi^{K}  \rightarrow \infty$ as $K \to \infty$ we infer that, for every $T \geq 0$
$$
\E \sup_{t \in [0,T]}\|v\|_{W^{m,p}}^{p} =0.
$$
Since $T$ is arbitrary, \eqref{eq:uniquenessCutoffSolns1} follows, and the proof of uniqueness is therefore complete.
\end{proof}
\begin{remark}\label{Rmk:MiscOnUniqueness}
With obvious modifications the above proof can be used to show that if $(u^{(1)}, \tau^{(1)})$
and $(u^{(2)}, \tau^{(2)})$ are local pathwise solutions of \eqref{eq:E:1}--\eqref{eq:E:2}
then
   \begin{equation}\label{eq:uniquenessCutoffSolns2}
    \Prb \left( \indFn{u^{(1)}(0) = u^{(2)}(0) } ( u^{(1)}(t) - u^{(2)}(t))  = 0; \forall t \in [0, \tau^{(1)} \wedge \tau^{(2)}] \right) = 1.
  \end{equation}
\end{remark}

With uniqueness for \eqref{eq:LimSys:EulerCutOff} in hand, in order
to establish the existence of pathwise solution,
we shall use the following criteria from
\cite{GyongyKrylov1}.
\begin{lemma}\label{thm:GyongyKry}
  Let $X$ be a complete separable metric space and consider 
  a sequence of $X$ valued random variables $\{Y_j\}_{j \geq 0}$.
  We denote the collection of joint laws of
$\{Y_j\}_{j \geq 1}$ by
   $\{\nu_{j,l}\}_{j, l \geq 1}$, i.e. we take
\begin{displaymath}
    \nu_{j,l}(E) := \Prb( (Y_j, Y_l) \in E),
    \quad E \in \mathcal{B}( X \times X ).
\end{displaymath}
  Then $\{Y_{j}\}_{j \geq 1}$ converges
  in probability if and only if for
  every subsequence of joint probabilities laws, $\{\nu_{j_k, l_k} \}_{ k \geq 0}$,
  there exists a further subsequence which converges weakly to a probability
  measure $\nu$ such that
  \begin{equation}\label{eq:diagonalCondGK}
    \nu( \{(u,v) \in X \times X:  u= v  \}) = 1.
  \end{equation}
\end{lemma}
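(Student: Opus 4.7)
The plan is to prove the two directions separately; only the converse requires real work. For the forward implication, if $Y_{j} \to Y$ in probability on the Polish space $X$, then for every pair of subsequences $j_{k}, l_{k} \to \infty$ the pair $(Y_{j_{k}}, Y_{l_{k}})$ converges in probability (hence in distribution) on $X \times X$ to the deterministic duplicate $(Y, Y)$. The law $\nu$ of $(Y,Y)$ is trivially supported on the diagonal $\{u = v\}$, so any subsequence of $\{\nu_{j_{k}, l_{k}}\}$ itself converges weakly to such a $\nu$.

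For the converse, I would argue by contradiction, using completeness of $X$ to reduce the problem to showing that $\{Y_{j}\}$ is Cauchy in probability. Assume it is not: then there exist $\epsilon, \delta > 0$ and sequences $j_{k}, l_{k} \to \infty$ with
\begin{equation*}
\Prb( d(Y_{j_{k}}, Y_{l_{k}}) > \epsilon) \geq \delta \quad \text{for all } k.
\end{equation*}
Apply the hypothesis to the subsequence of joint laws $\{\nu_{j_{k}, l_{k}}\}_{k \geq 1}$ to extract a further subsequence $\{\nu_{j_{k_{n}}, l_{k_{n}}}\}_{n \geq 1}$ which converges weakly to some probability measure $\nu$ with $\nu(\{u = v\}) = 1$.

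The key step is to pick the right test set. The set $K_{\epsilon} := \{(u,v) \in X \times X : d(u,v) \geq \epsilon\}$ is closed in $X \times X$ and disjoint from the diagonal, so $\nu(K_{\epsilon}) = 0$. The Portmanteau theorem for weak convergence on closed sets then yields
\begin{equation*}
\limsup_{n \to \infty} \nu_{j_{k_{n}}, l_{k_{n}}}(K_{\epsilon}) \leq \nu(K_{\epsilon}) = 0.
\end{equation*}
On the other hand, $\nu_{j_{k_{n}}, l_{k_{n}}}(K_{\epsilon}) \geq \Prb(d(Y_{j_{k_{n}}}, Y_{l_{k_{n}}}) > \epsilon) \geq \delta > 0$, a contradiction. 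Hence $\{Y_{j}\}$ is Cauchy in probability and, since $X$ is a complete separable metric space, converges in probability to some $X$-valued random variable $Y$.

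There is no real technical obstacle here; the only subtle point is choosing a \emph{closed} set disjoint from the diagonal (rather than the natural open set $\{d(u,v) > \epsilon\}$) so that the upper-semicontinuity half of Portmanteau can be applied. Completeness and separability of $X$ enter only at the very end, to upgrade the Cauchy-in-probability property to actual convergence in probability.
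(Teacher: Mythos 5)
The paper does not include a proof of this lemma; it is quoted verbatim from Gy\"ongy and Krylov \cite{GyongyKrylov1}, so there is no ``paper proof'' to compare against. Your argument is correct and is, in essence, the standard proof of that result: the forward direction by pushing convergence in probability through to convergence in law of the pair $(Y_{j_k},Y_{l_k})$ to the diagonal random element $(Y,Y)$, and the converse by a contradiction argument reducing to Cauchy-in-probability. The key technical points are all handled properly: the negation of Cauchy-in-probability is correctly unwound into a pair of indices $j_k,l_k\to\infty$ along which $\Prb(d(Y_{j_k},Y_{l_k})>\epsilon)\geq\delta$; the choice of the closed set $K_\epsilon=\{(u,v):d(u,v)\geq\epsilon\}$ (rather than the open superlevel set) is exactly what makes the upper-semicontinuity half of the Portmanteau theorem applicable; $K_\epsilon$ is disjoint from the diagonal so $\nu(K_\epsilon)=0$; and completeness of $X$ is invoked precisely where it is needed, to upgrade Cauchy-in-probability to convergence in probability. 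The only implicit convention worth flagging is that a ``subsequence of joint laws'' in the statement must be understood as requiring both indices $j_k,l_k\to\infty$ (otherwise the forward implication fails, e.g.\ by freezing $j_k\equiv 1$); your proof correctly uses this reading in both directions.
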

With this result in mind let us now return again to the sequence of
solutions $u^{j}$ to the system \eqref{eq:Galerkin}
relative to some stochastic basis
$\mathcal{S} = (\Omega, \mathcal{F},$ $\{\mathcal{F}_t\}_{t \geq 0}, \mathbb{P}, {\WW})$
which we fix in advance.
We define sequences of measures $\nu_{j,l}(\cdot) = \Prb((u^{j},u^{l}) \in \cdot)$
and $\mu_{j,l}(\cdot) = \Prb((u^{j},u^{l}, {\WW}) \in \cdot)$ on the phase spaces
$\mathcal{X}_J = \mathcal{X}_{S}\times \mathcal{X}_{S} =C([0,T], X_{\mm-2}) \times C([0,T], X_{\mm-2})$,
$\mathcal{X}_T = \mathcal{X}_{J} \times C([0,T],\mathfrak{U}_0)$
respectively.   
With only minor modifications to the
arguments in Lemma~\ref{thm:Tightness} we see that the
collection $\{\mu_{j,l}\}_{j,l \geq 1}$ is weakly compact.
Extracting a convergent subsequence $\mu_{j,l} \rightharpoonup \mu$ and invoking the
Skorokhod theorem we infer the existence of a probability
space $(\tilde{\Omega}, \tilde{\mathcal{F}}, \tilde{\Prb})$
on which there are defined random elements $(\tilde{u}^{l},\tilde{u}^{l},
\tilde{{\WW}}^{j,l})$
equal in law to  $\mu_{j,l}$ and so that
\begin{equation}\label{eq:jointConv}
  (\tilde{u}^{j},\tilde{u}^{l}, \tilde{\WW}^{j,l}) \rightarrow (\tilde{u}, \tilde{u}^{*}, \tilde{\WW}),
\end{equation}
where the convergence occurs $\tilde{\Omega}$ almost surely in $\mathcal{X}_{T}$.  As above we
infer that each of $(\tilde{u}, \tilde{\WW})$ and $(\tilde{u}^{*},\tilde{\WW})$
are solutions of \eqref{eq:LimSys:EulerCutOff} relative to the
\emph{same} stochastic basis $\mathcal{S} := (\tilde{\Omega}, \tilde{\mathcal{F}}, \tilde{\Prb}, \{\tilde{\mathcal{F}}_t\}, \tilde{\WW})$
with $\tilde{\mathcal{F}}_t$ the completion of  $\sigma$ algebra generated by $\{\tilde{u}(s), \tilde{u}^{*}(s), \tilde{\WW}(s)) : s \leq t\}$.
Define $\nu( \cdot ) = \tilde{\Prb}( (\tilde{u}, \tilde{u}^{*}) \in \cdot)$ and observe that, due to \eqref{eq:jointConv} $\nu^{j,l} \to \nu$, weakly.
Now Proposition~\ref{thm:Uniqueness} implies that $\nu(\{(u,u^{*}) \in \mathcal{X}_{J}:  u= u^{*}\}) = 1$.  Here we use that $H^{\mm-2} \subset W^{m,p}$, and so uniqueness in $W^{m,p}$ (which is proven in Proposition~\ref{thm:Uniqueness}) implies uniqueness everywhere, and hence in $H^{\mm-2}$. We may therefore infer (passing if needed to a subsequence) that $u^{j} \rightarrow u$ in  $\mathcal{X}_{S}$
almost surely, and \emph{on the original probability space}.  Having obtained this convergence and
referring again to \eqref{eq:Galerkin:HmBound} we may thus show that $u$ is a pathwise
solution of \eqref{eq:LimSys:EulerCutOff}.  We finally define the stopping time
$$
	\tau = \inf \left\{ t \geq 0 \colon \|u \|_{W^{m,p}} > R \right\}.
$$
 Note that this stopping time 
 is well defined since $u \in C([0,\infty),X_{\mm-2}) \subset C([0,\infty), X_{m,p})$
for $\mm = m+5$.  Hence,
relative to the initial fixed stochastic basis $\mathcal{S}$, $(u,\tau)$ is a local pathwise
solution of the stochastic Euler equation \eqref{eq:E:1}--\eqref{eq:E:2}, in the sense that
$u(\cdot \wedge \tau) \in L^{\infty}_{loc}([0,\infty); X_{\mm}) \cap C([0,\infty); X_{\mm-2})$
and \eqref{eq:EulerLocTmInt} holds for every $t \geq 0$.

In order to show that $\tau > 0$ we initially assume
$\|u_{0}\|_{{H^\mm}} \leq M$ for some {\em deterministic} $M >0$, and choose
$R > \bar{C}M$, where 
$\bar{C}\geq 1$  is the constant such that $\|u\|_{W^{1,\infty}}
\leq \bar{C} \|u\|_{H^{\mm}}$,  in the cut-off function in \eqref{eq:Galerkin}.  To pass to the
general case $\|u_{0}\|_{H^{\mm}} < \infty$ almost surely, 
we proceed as follows (see e.g. \cite[Section 4.2]{GlattHoltzZiane2}).  
For $k \geq 0$ we define
$u_{0}^{k} = u_{0} \indFn{ k \leq \|u_{0}\|_{H^{\mm}} < k+1}$ and obtain a corresponding
local pathwise solution $(u_{k}, \tau_{k})$ by applying the above construction
with any $R > \bar C (k+1)$ in the cut-off function $\theta_{R}$.  We then define
\begin{align}
u &= \sum_{k \geq 0} u_{k}\indFn{ k \leq \|u_{0}\|_{H^{\mm}} < k+1}
\label{eq:cutThatBitchUpSol}\\
\tau &= \sum_{k \geq 0} \tau_{k}\indFn{ k \leq \|u_{0}\|_{H^{\mm}} < k+1}
\label{eq:cutThatBitchUpST}
\end{align}
and find that
$(u,\tau)$ is in fact the local pathwise solution corresponding to the initial
condition $u_{0}$.

For any fixed $u_{0} \in X_{\mm}$ we
next extend the solution $(u, \tau)$ to a maximal time
of existence $\xi$ (cf. \cite{GlattHoltzZiane2,  MikuleviciusRozovskii2, Jacod1}).
Take $\mathcal{E}$ to be the set of all stopping times $\sigma$ corresponding
to a local pathwise solution of \eqref{eq:E:1}--\eqref{eq:E:2} with initial condition
$u_{0}$.   Let $\xi = \sup \mathcal{E}$ and consider a sequence
$\sigma_{k} \in \mathcal{E}$ increasing to $\xi$.  Due to the local uniqueness of
pathwise solutions we obtain a process $u$ defined on $[0, \xi)$ such that
$(u, \sigma_{k})$ are local pathwise solutions.  For each $r > 0$ we now take
$$
 \rho_{r} = \inf \left\{ t \geq 0 \colon  \| u(t) \|_{W^{1,\infty}} > r \right\} \wedge \xi.
$$
Note that $u$ is continuous on $W^{1,\infty}$ and so $\rho_{r}$ is a well-defined stopping time.
By continuity and uniqueness arguments we may infer that $(u,\rho_{r})$ is a local pathwise
solution for each $r > 0$.\footnote{Note that, for a given $r > 0$, we
may have $\Prb(\rho_{r} = 0) \neq 0$.  However, for almost every $\omega \in \Omega$, there
exists $r > 0$ such that, $\rho_{r}(\omega)> 0$.}  Suppose toward a contradiction
that, for some $T,r >0$ we have
$
 \Prb( \xi = \rho_{r}\wedge T) > 0
$.
Since $(u,\rho_{r}\wedge T)$ is a local
pathwise solution then there exists, another stopping time $\zeta > \rho_{r} \wedge T$
and a process $u^{*}$ such that $(u^{*}, \zeta)$ is a local pathwise
solution corresponding to $u_{0}$,
contradicting the maximality of $\xi$.
Hence we have proven that for every $T,r>0$ we have $\Prb(\xi = \rho_{r}\wedge T) = 0$.
Observe that on the set $\{\xi <\infty\}$, by suitably choosing $T$, we obtain that $\rho_{r}<\xi$
for every $r>0$. On this set we hence have
$
\sup_{t \in [0,\rho_{r}]} \| u(t) \|_{W^{1,\infty}} = r
$
for all $r>0$, which gives
\begin{align}
\sup_{t\in[0,\xi)} \| u(t) \|_{W^{1,\infty}} = \infty, \quad
\textrm{ on the set } \{\xi < \infty\}.\label{eq:BlowUpAtMaxTime}
\end{align}

In summary in this section we have
so far constructed maximal local pathwise $H^{\mm}$ solutions, but only for the non-sharp smoothness regime $\mm = m+5$, with the solution guaranteed to evolve continuously only in $X_{\mm-2}$, and which remains bounded in $X_\mm$. 
In the next section we shall use these very smooth solutions to construct local (maximal) pathwise
$W^{m,p}$ solutions for all $m > d/p + 1$, and for all $p\geq 2$, which will then prove Theorem~\ref{thm:LocalExistence}.

\section{Construction of \texorpdfstring{$W^{m,p}$}{W m p} solutions}
\setcounter{equation}{0}
\label{sec:WmpSolutionsCons}

For $m > d/p +1$ with $p \geq 2$,  we now establish
the local existence of solutions for any initial data
$u_{0} \in X_{m,p}$, which is $\mathcal{F}_{0}$ measurable, which concludes the proof of Theorem~\ref{thm:LocalExistence}. For this purpose we will adapt a density and stability argument from
\cite{KatoLai1984,Masmoudi2007}, which makes use of the very smooth solutions
constructed in Section~\ref{sec:compact}, as approximating solutions.
Indeed, when the initial data lies in $X_{m'}$, where $m'=m+5$, we obtained in Section~\ref{sec:compact} maximal pathwise solutions in the sense of Definition~\ref{def:PathwiseSol}. 
In order to make use of these smooth solutions
we define a sequence of regularized initial data 
\begin{align}
u^{j}_{0} &= F_{j^{-1} }\; u^{0} \label{eq:JBONA:DATA}
\end{align}
where the smoothing operators $F_{j^{-1}}$
and their properties are recalled in Appendix~\ref{sec:SmoothingOperator} below (see also \cite{KatoLai1984}).
For technical reasons 
we assume initially, that $\|u_{0}\|_{W^{m,p}} \leq M$
for some deterministic fixed constant $M$.   As in Section~\ref{sec:compact},
once we obtain the local existence of solutions for each fixed
${{M}}$, this assumption can be relaxed to the general case via
a cutting argument as given in \eqref{eq:cutThatBitchUpSol}--\eqref{eq:cutThatBitchUpST}.
Note that in view of Lemma~\ref{thm:MolProps}, estimate \eqref{eq:MollUniformBnd}
\begin{equation} \label{eq:UniformDataBnd}
	\sup_{j \geq 1} \|u_{0}^{j}\|_{W^{m,p}} \leq C \|u_{0}\|_{W^{m,p}} \leq C{{M}}
\end{equation}
where $C=C(m,p,\DD)$ is a universal constant.
The bound \eqref{eq:UniformDataBnd} will be used in a crucial way in the forthcoming estimates.  Since $F_{j^{-1}}$ is smoothing,
$\{u^{j}_{0}\}_{j \geq 1} \subset X_{m'}$, and we obtain from
the results in Section~\ref{sec:compact} a sequence $(u^{j}, \xi^{j})$
of maximal, pathwise solutions evolving continuously in $X_{m'-2}$ which are bounded in $X_{m'}$.  
In order to show that
this sequence converges to a local $X_{m,p}$ solution corresponding to the
initial condition $u_{0}$ we show that, up to some stopping time $\tau > 0$
the sequence $\{u^{j}\}_{j\geq 1}$ is Cauchy and hence convergent in $C([0,\tau); X_{m,p})$.

To obtain this convergence (along with an associated stopping time $\tau$) we apply
an abstract result from \cite{GlattHoltzZiane2} (and see also \cite{MikuleviciusRozovskii2}). 
For this purpose pick
fix any $T >0$ and define the sequence of stopping times
\begin{equation}\label{eq:stoppingTimes}
  \tau^{T}_j := \inf \left\{ t \geq 0\colon
     \|u^{j}(t)\|_{W^{m,p}} \geq 2 + \|u^{j}_{0}\|_{W^{m,p}} \right\} \wedge T,
\end{equation}
and let 
\begin{align}
\tau^{T}_{j,k} = \tau^{T}_j \wedge \tau^{T}_k \label{eq:stoppingTimes:Dual}
\end{align}
for $j,k\geq 1$.
Since $W^{m,p}$ is continuously embedded in $W^{1,\infty}$ it is clear that $\tau^{T}_{j} < \xi^{j}$, where  as usual $\xi^j$ is the maximal (stopping) time of existence of $u^j$, i.e.
\begin{equation}\label{eq:SeqReg1}
  \sup_{t \in [0, \xi^j]} \|u^{j}(t)\|_{W^{m,p}} = \infty, \quad \textrm{ on the set } \{\xi^j < \infty\}.
\end{equation}
From \cite[Lemma 5.1]{GlattHoltzZiane2} we recall:
\begin{lemma}[\bf Abstract Cauchy lemma]\label{thm:absConv}
  For $T>0$ and $\tau_{j,k}^{T}$ as defined in \eqref{eq:stoppingTimes:Dual}, suppose that we have
    \begin{equation}\label{eq:strngTypeCompCondXmp}
      \lim_{k \to \infty}
          \sup_{j \geq k}
          \mathbb{E} \sup_{t \in [0, \tau_{j,k}^{T}]}\| u^j(t) - u^k(t) \|_{W^{m,p}}= 0
    \end{equation}
    and
    \begin{equation}\label{eq:GrowthAsmpXmp}
      \lim_{S \rightarrow 0}
      \sup_{j\geq 1}
      \mathbb{P}\left[ \sup_{t \in [0,\tau_{j}^{T} \wedge S]}
        \|u^j(t)\|_{W^{m,p}}
         > \|u_{0}^{j}\|_{W^{m,p}} +  1
       \right] = 0.
    \end{equation}
    Then, there exists a stopping time $\tau$ with:
    \begin{equation}\label{eq:limStpTmConcls}
      \mathbb{P}( 0 < \tau \leq T) = 1,
    \end{equation}
    and a process predictable process $u(\cdot) = u(\cdot \wedge \tau) \in
   C([0,\tau], X_{m,p})$ such that
    \begin{equation}\label{eq:convConcl}
      \sup_{t \in [0, \tau]} \| u^{j_l} - u \|_{W^{m,p}} \rightarrow 0, \quad a.s.
    \end{equation}
    for some subsequence $j_l \rightarrow \infty$.  Moreover, the bound
    \begin{equation}\label{eq:pntWiseLimitBnd}
     \|u(t)\|_{W^{m,p}} \leq 2 + \sup_j \|u^{j}_0\|_{W^{m,p}},\quad a.s.
    \end{equation}
     holds uniformly for $t \in [0, \tau]$.
\end{lemma}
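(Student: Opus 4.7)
The plan is to combine a Borel--Cantelli extraction of an almost surely Cauchy subsequence (made possible by the bound \eqref{eq:strngTypeCompCondXmp}) with the qualitative lower bound on the stopping times provided by the growth condition \eqref{eq:GrowthAsmpXmp}. First, from \eqref{eq:strngTypeCompCondXmp} and Markov's inequality, I would select a strictly increasing subsequence $\{j_{l}\}_{l \geq 1}$ satisfying
\begin{align*}
  \Prb\left( \sup_{t \in [0,\tau_{j_{l},j_{l+1}}^{T}]} \|u^{j_{l}}(t) - u^{j_{l+1}}(t)\|_{W^{m,p}} > 2^{-l}\right) \leq 2^{-l}.
\end{align*}
The Borel--Cantelli lemma then guarantees that, almost surely, the telescoping series $\sum_{l \geq 1} \sup_{t \in [0, \tau_{j_{l},j_{l+1}}^{T}]} \|u^{j_{l}}(t) - u^{j_{l+1}}(t)\|_{W^{m,p}}$ converges.

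Next, I would set $\tau := \liminf_{l \to \infty} \tau_{j_{l}}^{T}$, which is a stopping time bounded above by $T$. For $\omega$ on $\{\tau > 0\}$ and any $t_{0} < \tau(\omega)$, one has $t_{0} \leq \tau_{j_{l},j_{l+1}}^{T}(\omega)$ for all sufficiently large $l$, so the telescoping bound shows that $\{u^{j_{l}}(\cdot,\omega)\}$ is uniformly Cauchy in $W^{m,p}$ on $[0, t_{0}]$, converging to a limit $u(\cdot,\omega) \in C([0,t_{0}]; X_{m,p})$. Patching in $t_{0}$ yields $u \in C([0,\tau); X_{m,p})$; the pointwise bound \eqref{eq:pntWiseLimitBnd} follows by passing to the limit in the defining inequality \eqref{eq:stoppingTimes} for $\tau_{j_{l}}^{T}$, which forces $\|u^{j_{l}}(t)\|_{W^{m,p}} \leq 2 + \|u_{0}^{j_{l}}\|_{W^{m,p}}$ on $[0,\tau_{j_{l}}^{T}]$. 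This uniform pointwise bound, together with the almost-sure uniform convergence, extends $u$ continuously to the closed interval $[0,\tau]$, and predictability of $u$ is inherited from that of the $u^{j_{l}}$.

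The crucial remaining step, and what I expect to be the main obstacle, is verifying that $\Prb(\tau > 0) = 1$. The growth condition \eqref{eq:GrowthAsmpXmp} provides the uniform-in-$j$ bound
\begin{align*}
  \Prb(\tau_{j}^{T} < S) \leq \Prb\left( \sup_{t \in [0, \tau_{j}^{T} \wedge S]} \|u^{j}(t)\|_{W^{m,p}} > \|u_{0}^{j}\|_{W^{m,p}} + 1 \right) \leq \epsilon(S),
\end{align*}
with $\epsilon(S) \downarrow 0$ as $S \downarrow 0$, using that \eqref{eq:stoppingTimes} demands $\|u^{j}\|_{W^{m,p}}$ reach the threshold $2 + \|u_{0}^{j}\|_{W^{m,p}}$. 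A naive Borel--Cantelli cannot be applied directly to $\{\tau_{j_{l}}^{T} < S\}$ at fixed $S$ since $\epsilon(S)$ is not summable in $l$; the remedy is to interleave the subsequence extraction with the growth condition, by working on the events $\{\tau_{j_{l}}^{T} \wedge \tau_{j_{l+1}}^{T} \geq S_{\delta}\}$ for $S_{\delta}$ chosen via \eqref{eq:GrowthAsmpXmp} so that $\epsilon(S_{\delta}) \leq \delta$. On these events the Cauchy telescoping bound yields uniform convergence on the deterministic interval $[0, S_{\delta}]$, and since $\Prb(\tau_{j_{l},j_{l+1}}^{T} \geq S_{\delta}) \geq 1 - 2\delta$, a diagonal procedure over $\delta \to 0$ produces a limit $u$ and a stopping time $\tau$ satisfying $\Prb(\tau \geq S_{\delta}) \geq 1 - 2\delta$ for every $\delta > 0$. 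Letting $\delta \downarrow 0$ forces $\Prb(\tau = 0) = 0$, which combined with $\tau \leq T$ gives \eqref{eq:limStpTmConcls}, and the subsequential almost sure convergence asserted in \eqref{eq:convConcl} follows from the construction.
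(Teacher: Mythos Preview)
The paper does not prove this lemma; it is quoted from \cite[Lemma~5.1]{GlattHoltzZiane2}. Your plan---Borel--Cantelli extraction of an almost surely Cauchy subsequence, setting $\tau := \liminf_{l}\tau_{j_{l}}^{T}$, and building $u$ by telescoping on $[0,t_{0}]$ for each $t_{0}<\tau$---is the standard one, and those steps are correct.

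The gap is in your argument for $\Prb(\tau > 0) = 1$. From \eqref{eq:GrowthAsmpXmp} you correctly deduce $\Prb(\tau_{j_{l},j_{l+1}}^{T}\geq S_{\delta})\geq 1-2\delta$ for each \emph{fixed} $l$, but this does not yield $\Prb(\liminf_{l}\tau_{j_{l}}^{T}\geq S_{\delta})\geq 1-2\delta$: a uniform-in-$l$ marginal bound says nothing about the event that $\tau_{j_{l}}^{T}<S_{\delta}$ infinitely often, and \eqref{eq:GrowthAsmpXmp} supplies no decay in $l$ at fixed $S$ that a second Borel--Cantelli could exploit. The ``diagonal procedure'' you invoke is too vague to close this. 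A working route instead feeds the Cauchy bound back into the continuity of a \emph{single} approximant at $t=0$: on the good set pick $L$ with $\sum_{l\geq L}\epsilon_{l}<\tfrac14$ (where $\epsilon_{l}:=\sup_{[0,\tau_{j_{l},j_{l+1}}^{T}]}\|u^{j_{l}}-u^{j_{l+1}}\|_{W^{m,p}}$) and $\bigl|\|u_{0}^{j_{l}}\|_{W^{m,p}}-\|u_{0}^{j_{l'}}\|_{W^{m,p}}\bigr|<\tfrac14$ for all $l,l'\geq L$. For $N\geq L$ set $\rho_{N}:=\min_{L\leq l\leq N}\tau_{j_{l}}^{T}$, attained at some $l^{*}\in[L,N]$; when $\rho_{N}<T$ the threshold is hit for $u^{j_{l^{*}}}$, and telescoping on $[0,\rho_{N}]$ (valid since $\rho_{N}\leq\tau_{j_{l},j_{l+1}}^{T}$ for every $L\leq l<N$) forces $\|u^{j_{L}}(\rho_{N})\|_{W^{m,p}}>\tfrac{3}{2}+\|u_{0}^{j_{L}}\|_{W^{m,p}}$. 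If $\inf_{l\geq L}\tau_{j_{l}}^{T}=0$ then $\rho_{N}\to 0$, and continuity of $u^{j_{L}}$ at $t=0$ gives $\|u_{0}^{j_{L}}\|_{W^{m,p}}\geq\tfrac{3}{2}+\|u_{0}^{j_{L}}\|_{W^{m,p}}$, a contradiction; hence $\tau\geq\inf_{l\geq L}\tau_{j_{l}}^{T}>0$.
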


In view of Lemma~\ref{thm:absConv}, we may now establish the
essential convergence needed for Theorem~\ref{thm:LocalExistence} 
in the general case by verifying \eqref{eq:strngTypeCompCondXmp}
and \eqref{eq:GrowthAsmpXmp}.  
To prove \eqref{eq:strngTypeCompCondXmp} we fix arbitrary $j,k \geq 1$ and denote $v = u^{k} - u^{j}$ where $v_{0} = u^{k}_{0} - u^{j}_{0}$.
We have
\begin{align*}
  dv + P\left(v \cdot \nabla u^{k} + u^{j} \cdot \nabla v \right) dt = P(\sigma(u^{j}) -  \sigma(u^{k}))  {\dW}.
\end{align*}
Applying $\partial^{\alpha}$ to this system and then the It\={o}
lemma in $L^{p}$ we obtain
\begin{align}
  d \|\partial^{\alpha}v\|^p_{L^p} =& -
   p \int_{\DD}
    \partial^\alpha v  \cdot
      \partial^\alpha P(v \cdot \nabla u^{k} + u^{j} \cdot \nabla v  ) |\partial^{\alpha} v|^{p -2}dx dt
      \notag \\
      &+ \sum_{l \geq 1}\int_{\DD}   \biggl( \frac{p}{2}
       |\partial^\alpha P(\sigma_{l}(u^{j}) -  \sigma_{l}(u^{k}))|^2 |\partial^{\alpha} v|^{p -2} \notag\\
       &\qquad \qquad 
       + \frac{p(p-2)}{2} (\partial^\alpha v  \cdot P (\sigma_{l}(u^{j}) -  \sigma_{l}(u^{k})))^{2} |\partial^{\alpha} v|^{p -4}
               \biggr)
       \, dx  dt \notag \\
     &+ p \sum_{l \geq 1} \left(
       \int_{\DD}   \partial^\alpha v  \cdot
                       \partial^\alpha P(\sigma_{l}(u^{j}) -  \sigma_{l}(u^{k})) |\partial^{\alpha} v|^{p -2}\,dx
       \right) dW_l \notag \\
       :=& (J_1^{\alpha} + J_2^{\alpha}) dt + J_3^{\alpha} {\dW}.
       \label{eq:DiffOfPairLpEvolution}
\end{align}
Using \eqref{eq:DiffOfPairLpEvolution}, we now estimate $v$ in $W^{m,p}$.  For the nonlinear terms we use Lemma~\ref{lemma:Leray}
and infer
\begin{align}
 \sum_{\alpha \leq m}|J^{\alpha}_{1}|
 &\leq C\|P (v \cdot \nabla u^{k})\|_{W^{m,p}}\|v\|_{W^{m,p}}^{p-1}
 +   
     \sum_{\alpha \leq m}  |(\partial^\alpha P( u^{j} \cdot \nabla v ),  \partial^\alpha v  |\partial^{\alpha} v|^{p -2})| \notag\\
 &\leq C \|v\|_{W^{m,p}}^{p-1} ( \|v\|_{L^{\infty}} \| u^{k} \|_{W^{m+1,p}} + \|v\|_{W^{m,p}} \| \nabla u^{k} \|_{L^{\infty}}) \notag\\
 & \qquad \qquad   + C \|v\|_{W^{m,p}}^{p-1}(\|u^{j}\|_{W^{1,\infty}} \|v\|_{W^{m,p}}
+  \|v\|_{W^{1,\infty}} \|u^{j}\|_{W^{m,p}})\notag\\
 &\leq C  \| u^{k} \|_{W^{m+1,p}}  \|v\|_{W^{m-1,p}} \|v\|_{W^{m,p}}^{p-1} + C  (\|u^{k}\|_{W^{m,p}} + \|u^{j}\|_{W^{m,p}}) \|v\|_{W^{m,p}}^{p}\notag\\
  &\leq C  \| u^{k} \|_{W^{m+1,p}}^{p}  \|v\|_{W^{m-1,p}}^{p} + C  (\|u^{k}\|_{W^{m,p}} + \|u^{j}\|_{W^{m,p}} + 1) \|v\|_{W^{m,p}}^{p}.
 \label{eq:XmContDependenceProblem}
\end{align}
Note that the first term in the final inequality prevents one from directly closing
the estimates for $v$ in $W^{m,p}$.  We will therefore need to make further estimates
for $u^{k}$ in $W^{m+1,p}$ and $v$ in $W^{m-1,p}$, cf.~\eqref{eq:EvolutionIndividualTermsProdu}--\eqref{eq:EvolutionIndividualTermsProdv} below.
For the terms involving $J_{2}^{\alpha}$ we use the local Lipschitz condition \eqref{eq:sigMappings}
and obtain
\begin{align}
	\sum_{\alpha \leq m } |J_{2}^{\alpha}|
	&\leq C\|v\|^{p-2}_{W^{m,p}}
        \left( \sum_{\alpha \leq m } \int_{\DD}
        \left(\sum_{l \geq 1} |\partial^\alpha P(\sigma_{l}(u^{j}) -  \sigma_{l}(u^{k}))|^2 \right)^{p/2} \right)^{2/p} \notag \\
        &\leq C  \|v\|^{p-2}_{W^{m,p}} \|P(\sigma(u^{j}) -  \sigma(u^{k}))\|^2_{\WB^{m,p}}
        \leq  C \locCon(\|u^{k}\|_{L^\infty} + \| u^{j}\|_{L^\infty})^2 \|v\|^{p}_{W^{m,p}}. 	
        \label{eq:XmContDependenceItoCorrectionTerms}
\end{align}
Finally, estimating in a similar manner to \eqref{eq:MFJonesConsultsForBDG},
we find that for any stopping time $\tau$, 
\begin{align}
  \E \left( \sup_{s \in [0,\tau]} \left|\int_{0}^{s}J_{3}^{\alpha}dW \right| \right)
  	\leq&  C\E \left( \int_{0}^{\tau} \sum_{k \geq 1} \left(
       \int_{\DD}   \partial^\alpha v  \cdot
                       \partial^\alpha P(\sigma_{l}(u^{j}) -  \sigma_{l}(u^{k})) |\partial^{\alpha} v|^{p -2}\,dx
       \right)^{2}ds
	\right)^{1/2} \notag \\
	\leq& \frac{1}{2} \E \sup_{s \in [0,\tau]}\|\partial^{\alpha} v\|^{p}_{L^p} + C
	\E  \int_{0}^{\tau}\locCon(\|u^{k}\|_{L^\infty} + \| u^{j}\|_{L^\infty})^2 
	\|v\|_{W^{m,p}}^p
       ds.
       \label{eq:XmContDependenceStochasticIntegralTerms}
\end{align}
Combining the estimates obtained in \eqref{eq:XmContDependenceProblem}--\eqref{eq:XmContDependenceStochasticIntegralTerms} 
and recalling the definition of $\tau^T_{j,k}$ in \eqref{eq:stoppingTimes:Dual} we find that
\begin{align*}
   \E \left( \sup_{ [0, \tau^{T}_{j,k} \wedge t]} \|v\|_{W^{m,p}}^{p} \right)
   \leq& 2\E  \|v_{0}\|_{W^{m,p}}^{p}
   	  + C \E  \int_{0}^{\tau^{T}_{j,k} \wedge  t}  
	  \left(\| u^{j}\|_{W^{m,p}} + \|u^{k}\|_{W^{m,p}} + \locCon(\|u^{k}\|_{L^\infty} + \| u^{j}\|_{L^\infty})^2\right)
	  \|v\|^{p}_{W^{m,p}}  ds\\
	  &+ C \E  \int_{0}^{\tau^{T}_{j,k} \wedge  t}  \left(  \| u^{k} \|_{W^{m+1,p}}^{p}  \|v\|_{W^{m-1,p}}^{p} \right)ds\\
   \leq& 2\E  \|v_{0}\|_{W^{m,p}}^{p}
   	  + C   \int_{0}^{t}
	  \left( \E  \sup_{ [0, \tau^{T}_{j,k} \wedge s]} \|v\|_{W^{m,p}}^{p}
	  +
	  \E  \sup_{ [0, \tau^{T}_{j,k} \wedge s]}( \|v\|_{W^{m-1,p}}^{p} \|u^{k}\|_{W^{m+1,p}}^{p}) \right) ds
\end{align*}
where $C$ is a positive constant that depends on ${M}$ and $\locCon$ but
is independent of $j,k$. 
By an application of the classical Gr\"onwall lemma we obtain that
\begin{align*}
  \E \left( \sup_{ [0, \tau^{T}_{j,k}]} \| u^{k}- u^{j} \|_{W^{m,p}}^{p} \right)=  \E \left( \sup_{ [0, \tau^{T}_{j,k}]} \|v\|_{W^{m,p}}^{p} \right)
    \leq C\left( \E \|u_{0}^{k} - u_{0}^{j}\|_{W^{m,p}}^{p} +
        \E  \sup_{[0, \tau^{T}_{j,k}]  }( \|v\|_{W^{m-1,p}}^{p} \|u^{k}\|_{W^{m+1,p}}^{p})
    \right)
\end{align*}
where $C = C(m, p, \DD,{M}, T)$ is  {\em independent} of both $j, k$.  Observe that, in view
of Lemma~\ref{thm:MolProps}, estimate \eqref{eq:MollConvEspWmp}, and applying
the dominated convergence theorem we conclude that $\sup_{j \geq k} \E \|u_{0}^{k} - u_{0}^{j}\|_{W^{m,p}}^p$
goes to zero as $k \rightarrow \infty$.  As such, \eqref{eq:strngTypeCompCondXmp} will follow once we
show that
\begin{align}
 \lim_{k \rightarrow \infty}
 \sup_{j \geq k}\E  \sup_{  [0, \tau^{T}_{j,k} ] }( \|v\|_{W^{m-1,p}}^{p} \|u^{k}\|_{W^{m+1,p}}^{p})
 =0.
 \label{eq:AnnoyingProdEstGoal}
\end{align}

With this goal of establishing \eqref{eq:AnnoyingProdEstGoal} in mind, 
let us determine $d (\|v\|_{W^{m-1,p}}^{p} \|u^{k}\|_{W^{m+1,p}}^{p})$.
We have cf. \eqref{eq:LpItoFormalSoln} and \eqref{eq:DiffOfPairLpEvolution} that
\begin{align}
  d\|u^{k}\|_{W^{m+1,p}}^{p}
  &=  ({I}_{1} +{I}_{2}) dt +  {I}_{3} {\dW},
    \label{eq:EvolutionIndividualTermsProdu}\\
  d \|v\|_{W^{m-1,p}}^{p}
  &= ({J}_{1} +  {J}_{2} )dt +  {J}_{3} {\dW},
  \label{eq:EvolutionIndividualTermsProdv}
\end{align}
where, to make the notation less cumbersome, we take
\begin{align*}
  I_{l} = \sum_{|\alpha| \leq m +1 } I_{l}^{\alpha}, \qquad \textrm{ and }
 \quad J_{l} = \sum_{|\alpha| \leq m - 1 } J_{l}^{\alpha}  \quad \textrm{ for } l = 1, 2,3.
\end{align*}
The elements $I_{l}^{\alpha}$ are defined as in \eqref{eq:LpItoFormalSoln}
(with $u$ replaced with $u^{k}$ throughout) and $J_{l}^{\alpha}$ are as in 
\eqref{eq:DiffOfPairLpEvolution}. 
By an application of the It\={o} product rule we find that
\begin{align*}
    d (\|v\|_{W^{m-1,p}}^{p}& \|u^{k}\|_{W^{m+1,p}}^{p}) \\
    =&
     \|v\|_{W^{m-1,p}}^{p}  d \|u^{k}\|_{W^{m+1,p}}^{p}
    +   \|u^{k}\|_{W^{m+1,p}}^{p} d \|v\|_{W^{m-1,p}}^{p}
    +d \|v\|_{W^{m-1,p}}^{p} d \|u^{k}\|_{W^{m+1,p}}^{p}\\
     =& \left( \|v\|_{W^{m-1,p}}^{p} ({I}_{1} + {I}_{2})
    +  \|u^{k}\|_{W^{m+1,p}}^{p} ({J}_{1} +  {J}_{2})
    + {K} \right) dt
    + \left(  \|v\|_{W^{m-1,p}}^{p}{I}_{3}  +\|u^{k}\|_{W^{m+1,p}}^{p}{J}_{3} 
    \right) \dW,
\end{align*}
where $K$ is the term arising from $I_{3} \dW J_{3} \dW$ and is given by
\begin{align*}
{K} = p^{2} \sum_{l \geq 1}
  \left( \sum_{|\alpha| \leq m +1}        \int_{\DD}   \partial^\alpha u^{k}  \cdot
                       \partial^\alpha P\sigma_{l}(u^{k}) |\partial^{\alpha} u^{k}|^{p -2}\,dx \right)
 \left( \sum_{|\alpha| \leq m -1}         \int_{\DD}   \partial^\alpha v  \cdot
                       \partial^\alpha P(\sigma_{l}(u^{j}) -  \sigma_{l}(u^{k})) |\partial^{\alpha} v|^{p -2}\,dx \right).
\end{align*}
In view of the estimates carried out in Section~\ref{sec:apriori} (cf. \eqref{eq:J1EstLp}--\eqref{eq:ItoCorTemEstsLp})
and making use of the assumption \eqref{eq:ExtraDerivativeAssumptforDensityStability} we immediately infer that
\begin{align}
   \left| \|v\|_{W^{m-1,p}}^{p} ({I}_{1} + {I}_{2}) \right|
   \leq& C (\locCon(\|u^{k}\|_{L^\infty} )^2 + \|u^{k}\|_{W^{1,\infty}})\|u^{k}\|_{W^{m+1,p}}^{p} \|v\|_{W^{m-1,p}}^{p} 
   + C \locCon(\|u^{k}\|_{L^\infty} )^2 \|v\|_{W^{m-1,p}}^{p}.
   \label{eq:vujProdEstuTerms}
\end{align}
We next treat the drift terms in \eqref{eq:EvolutionIndividualTermsProdv}.  For ${J}_{1}$, recalling that $P = I -Q$ we write
\begin{align}
  | {J}_{1} | 
  &\leq p \sum_{|\alpha|\leq m-1}   \left| \int_{\DD}
    \partial^\alpha v  \cdot 
      \partial^\alpha P(v \cdot \nabla u^{k} + u^{j} \cdot \nabla v  ) |\partial^{\alpha} v|^{p -2}dx \right| \notag\\
 &\leq C \| v\|_{W^{m-1,p}}^{p-1} \| P(v \cdot \nabla u^k)\|_{W^{m-1,p}} + C\sum_{|\alpha|\leq m-1} \left| \int_{\DD}
    \partial^\alpha v  \cdot \partial^\alpha (u^{j} \cdot \nabla v) |\partial^{\alpha} v|^{p -2}dx \right| \notag\\
   &\qquad \qquad \qquad
    + C \sum_{|\alpha|\leq m-1} \left| \int_{\DD} \partial^\alpha v  \cdot \partial^\alpha Q(u^{j} \cdot \nabla v) |\partial^{\alpha} v|^{p -2}dx \right|\notag\\
   &= J_{1,1} + J_{1,2} + J_{1,3} \label{eq:NastySobolevEstimate}
 \end{align}
The right side of the above estimate may be bounded as follows. 
To bound $J_{1,1}$ we use Lemma~\ref{lemma:Leray} and obtain
\begin{align}
J_{1,1} \leq C  \| v\|_{W^{m-1,p}}^{p-1} \left( \|v\|_{L^{\infty}} \| u^{k}\|_{W^{m,p}} + \|v\|_{W^{m-1,p}} \| u^{k}\|_{W^{1,\infty}} \right)
\leq C \|v\|_{W^{m-1,p}}^{p} \| u^{k}\|_{W^{m,p}}. \label{eq:J11}
\end{align}
For the other two terms on the right side of \eqref{eq:NastySobolevEstimate} we cannot estimate as in Lemma~\ref{lemma:Leray} directly;
we would obtain bound of the type $\|u^{j}\|_{W^{m-1,p}} \|v\|_{W^{m,p}} \|v\|_{W^{m-1,p}}^{p-1}$, which would prevents us from closing
the estimates involving $\|v\|_{W^{m-1,p}}^{p}$. 
To bound $J_{1,2}$ we we use the Leibniz rule, the H\"older and Gagliardo-Nirenberg inequalities. 
There is only one non-standard term $\| \partial^\alpha u^j \cdot \nabla v \|_{L^p}$, which is bounded as 
$$
\sum_{|\alpha|\leq m-1}  \| \partial^\alpha u^j \cdot \nabla v \|_{L^p} \leq C \| u^j \|_{W^{m-1,q}} \|\nabla v\|_{L^{r}} \leq \| u^j \|_{W^{m,p}} \|v\|_{W^{m-1,p}},
$$ 
where $q =p d /(d-p)$, $r =pq/(q-p) = d$ if $p<d$, and $q = \infty$, $r = p$ if $p \geq d$. The other terms are bounded as in Lemma~\ref{lemma:Leray}, and we obtain
\begin{align}
   {J}_{1,2} &\leq C \|v\|_{W^{m-1,p}}^{p} \|u^{j}\|_{W^{m,p}}.
 \label{eq:J12}
\end{align}
Lastly, the ``pressure term'' $J_{1,3}$ is estimated using the the H\"older inequality, the Agmon-Douglis-Nirenberg estimate 
\eqref{eq:Agmon}, and the Gagliardo-Nirenberg inequality as 
\begin{align}
J_{1,3} \leq& \|Q(u^{j} \cdot \nabla v)\|_{W^{m-1,p}} \|v \|_{W^{m-1,p}}^{p-1} \notag\\
\leq&  C ( \|  \partial_i u_l^{j} \partial_l v_i \|_{W^{m-2,p}} + \| u^{j} v \|_{W^{m-1,p}}) \|v \|_{W^{m-1,p}}^{p-1} 
\leq C \|v\|_{W^{m-1,p}}^{p} \| u^{j}\|_{W^{m,p}} . \label{eq:J13}
\end{align}
Combining  \eqref{eq:J11} --\eqref{eq:J13} we conclude
\begin{align}
    |{J}_{1}|
 	\leq C \|v\|_{W^{m-1,p}}^{p} (\|u^{j}\|_{W^{m,p}} + \| u^{k}\|_{W^{m,p}} ).
	 \label{eq:estvJ2Wm1}
\end{align}
For  ${J}_{2}$ we find, as above in \eqref{eq:XmContDependenceItoCorrectionTerms}
that
\begin{align}
	|{J}_{2}|
	\leq C\|v\|^{p-2}_{W^{m-1,p}} \|P(\sigma(u^{j}) -  \sigma(u^{k}))\|^2_{\WB^{m-1,p}}
        \leq C \locCon(\|u^{k}\|_{L^\infty} + \| u^{j}\|_{L^\infty})^2\|v\|^{p}_{W^{m-1,p}}.
         \label{eq:estvJ3Wm1}
\end{align}
Combining \eqref{eq:estvJ2Wm1}--\eqref{eq:estvJ3Wm1}  we find
\begin{align}
  \left| \|u^{k}\|_{W^{m+1,p}}^{p} ({J}_{1} +  {J}_{2} ) \right|
  \leq C  (\locCon(\|u^{k}\|_{L^\infty} + \| u^{j}\|_{L^\infty})^2  + \|u^{j}\|_{W^{m,p}} + \| u^{k}\|_{W^{m,p}} )  \|u^{k}\|_{W^{m+1,p}}^{p} \|v\|_{W^{m-1,p}}^{p}.
   \label{eq:vujProdEstvTerms}
\end{align}
The term $K$ is estimated using the H\"older and Minkowski inequalities followed by the standing
assumption on $\sigma$, \eqref{eq:sigMappings},
\begin{align}
|K| 
\leq& C \left( \sum_{l \geq 1} \left( \sum_{|\alpha| \leq m +1} \int_{\DD} |\partial^\alpha P \sigma_{l}(u^{k})| |\partial^{\alpha} u^{k}|^{p -1}\,dx \right)^{2}  \right)^{1/2} \notag\\
& \qquad \qquad 
 	  \left( \sum_{l \geq 1} \left( \sum_{|\alpha| \leq m -1}  \int_{\DD} |\partial^\alpha P(\sigma_{l}(u^{j}) -  \sigma_{l}(u^{k}))| |\partial^{\alpha} v|^{p -1}\,dx \right)^{2}  \right)^{1/2} 		\notag \\
\leq& C \sum_{|\alpha| \leq m+1} \int_{\DD} \left( \sum_{l \geq 1}  |\partial^\alpha P\sigma_{l}(u^{k})|^{2} \right)^{1/2} |\partial^{\alpha} u^{k}|^{p -1}\,dx \notag\\
& \qquad \qquad 
 	  \sum_{|\alpha| \leq m-1} \int_{\DD} \left( \sum_{l \geq 1} |\partial^\alpha P(\sigma_{l}(u^{j}) -  \sigma_{l}(u^{k}))|^{2}  \right)^{1/2} |\partial^{\alpha} v|^{p -1}\,dx
                     \notag\\
\leq&C \|u^{k}\|_{W^{m+1,p}}^{p-1} \left(  \sum_{|\alpha| \leq m +1}   \int_{\DD} \left( \sum_{l \geq 1}  |\partial^\alpha P\sigma_{l}(u^{k})|^{2} \right)^{p/2} dx \right)^{1/p} 	\notag\\
& \qquad \qquad 
					 \|v\|_{W^{m-1,p}}^{p-1} \left( \sum_{|\alpha| \leq m -1}   \int_{\DD} \left( \sum_{l \geq 1} |\partial^\alpha P(\sigma_{l}(u^{j}) -  \sigma_{l}(u^{k}))|^{2}  \right)^{p/2} dx\right)^{1/p} 
		\notag\\
\leq& C  \|u^{k}\|_{W^{m+1,p}}^{p-1} \|P \sigma(u^{k})\|_{\WB^{m+1, p}} \|v\|_{W^{m-1,p}}^{p-1}  \|P  (\sigma(u^{j}) -  \sigma(u^{k})  ) \|_{\WB^{m-1,p}} 
		\notag\\
\leq& C \locCon(\|u^{k}\|_{L^\infty} + \| u^{j}\|_{L^\infty})^2
 \left( \|u^{k}\|_{W^{m+1,p}}^{p} \|v\|_{W^{m-1,p}}^{p} + \|v\|_{W^{m-1,p}}^{p} \right).
\label{eq:vujProdEstWeirdItoProdTerm}
\end{align}
To treat the stochastic terms we proceed similarly to \eqref{eq:MFJonesConsultsForBDG}
and find that for any stopping time $\tau$
\begin{align}
  &\E \left( \sup_{s \in [0,\tau]} \left|\int_{0}^{s} \|u^{k}\|_{W^{m+1,p}}^{p}{J}_{3} {\dW} \right| \right) \notag\\
  &\qquad \leq  C\E \left( \int_{0}^{\tau}   \|u^{k}\|_{W^{m+1,p}}^{2p} \sum_{l \geq 1} \left(
       \sum_{|\alpha| \leq m-1} \int_{\DD}
                      |\partial^\alpha P(\sigma_{l}(u^{j}) -  \sigma_{l}(u^{k}))|
                       |\partial^{\alpha} v|^{p -1}\,dx
       \right)^{2}ds
	\right)^{1/2} \notag \\
  &\qquad \leq  C\E \left( \int_{0}^{\tau}   \|u^{k}\|_{W^{m+1,p}}^{2p} \left(
       \sum_{|\alpha| \leq m-1} \int_{\DD}  \left( \sum_{l \geq 1}
                      |\partial^\alpha P(\sigma_{l}(u^{j}) -  \sigma_{l}(u^{k}))|^{2} \right)^{1/2}
                       |\partial^{\alpha} v|^{p -1}\,dx
       \right)^{2}ds
	\right)^{1/2} \notag \\
  &\qquad \leq  C\E \left( \int_{0}^{\tau}   \|u^{k}\|_{W^{m+1,p}}^{2p} 
	\|P(\sigma(u^{j}) -  \sigma(u^{k})) \|^{2}_{\WB^{m-1,p}}
	\|v\|_{W^{m-1,p}}^{2(p-1)}
	ds
	\right)^{1/2} \notag \\
  &\qquad \leq \frac{1}{4} \E \sup_{s \in [0,\tau]}\left(\| v\|^{p}_{W^{m-1,p}} \|u^{k}\|_{W^{m+1,p}}^{p}\right) + C
	\E  \int_{0}^{\tau} \locCon(\|u^{k}\|_{L^\infty} + \| u^{j}\|_{L^\infty})^2
	\|v\|_{W^{m-1,p}}^p\|u^{k}\|_{W^{m+1,p}}^{p}
       ds.
        \label{eq:vujProdEstvStocTermsvHard}
\end{align}
Similarly to \eqref{eq:vujProdEstvStocTermsvHard} above we also obtain
\begin{align}
  \E& \left( \sup_{s \in [0,\tau]} \left|\int_{0}^{s} \|v\|_{W^{m-1,p}}^{p}{I}_{3} {\dW} \right| \right) \notag\\
  	&\qquad \quad \leq  C\E \left( \int_{0}^{\tau}   \|v\|_{W^{m-1,p}}^{2p} 
	\|P( \sigma(u^{k})) \|^{2}_{\WB^{m+1,p}}
	\|u^{k}\|_{W^{m+1,p}}^{2(p-1)}
	ds
	\right)^{1/2} \notag \\
	&\qquad \quad \leq \frac{1}{4} \E \sup_{s \in [0,\tau]}\left(\| v\|^{p}_{W^{m-1,p}} \|u^{k}\|_{W^{m+1,p}}^{p}\right) + C
	\E  \int_{0}^{\tau} \locCon(\|u^{k}\|_{L^\infty})^2
	\|v\|_{W^{m-1,p}}^p(\|u^{k}\|_{W^{m+1,p}}^{p} + 1)
       ds.
        \label{eq:vujProdEstvStocTermsuHard}
\end{align}

Summarizing, from the estimates \eqref{eq:vujProdEstuTerms}, \eqref{eq:vujProdEstvTerms}--\eqref{eq:vujProdEstvStocTermsuHard},
and the definition of $\tau^{T}_{n,m}$ in \eqref{eq:stoppingTimes} we find that
\begin{align*}
   \E &\left( \sup_{t \in [0, \tau^{T}_{j,k} \wedge t]} \|v\|_{W^{m-1,p}}^{p} \|u^{k}\|_{W^{m+1,p}}^{p}  \right)\\
   &\quad \leq 2\E  \left( \|v_{0}\|_{W^{m-1,p}}^{p} \|u^{k}_{0}\|_{W^{m+1,p}}^{p} \right)
   	  + C \E  \int_{0}^{t} \left(
	  \sup_{t \in [0, \tau^{T}_{j,k} \wedge s]}
	  \left( \|v\|_{W^{m-1,p}}^{p} \|u^{k}\|_{W^{m+1,p}}^{p} \right)
	  +	
	  \sup_{t \in [0, \tau^{T}_{j,k} \wedge s]} \|v\|_{W^{m-1,p}}^{p}\right)
	  ds
\end{align*}
for any  $t > 0$ where the constant $C$ depends on ${M}$, $\locCon$ and the data
but not on $j$, $k$.
Thus, again invoking the Gr\"onwall lemma finally conclude that
\begin{align}
 \E& \left( \sup_{t \in [0, \tau^{T}_{j,k}]}  \|u^{k}\|_{W^{m+1,p}}^{p} \|v\|_{W^{m-1,p}}^{p}  \right)\notag\\
 &\qquad \qquad \qquad \leq  C \E \left( \|u^{k}_{0}\|_{W^{m+1,p}}^{p} \|u^{k}_{0} - u^{j}_{0}\|_{W^{m-1,p}}^{p}  \right) +C
  \E \left( \sup_{t \in [0, \tau^{T}_{j,k}]}  \|u^{k}- u^{j}\|_{W^{m-1,p}}^{p}  \right)
   \label{eq:FracEspConvWeakerNormProd}
\end{align}
where the constant $C$ is independent of $j,k$.  By the dominated convergence theorem
(for $(\Omega, \mathcal{F}, \Prb)$)
and making use of the properties of the smoothing operator $F_{\epsilon}$, cf.~\eqref{eq:MollDivEpsScaling} and
\eqref{eq:MollConvEspWmmin1pGain}, we find
\begin{align*}
   \lim_{k \rightarrow \infty}
    \sup_{j \geq k}\E \left( \|u^{k}_{0}\|_{W^{m+1,p}}^{p} \|u^{k}_{0} - u^{j}_{0}\|_{W^{m-1,p}}^{p}  \right)
    \leq C     \lim_{k \rightarrow \infty}
            \sup_{j \geq k}\E \left( \|u_{0}\|_{W^{m,p}}^{p} k^{p}\|u^{k}_{0} - u^{j}_{0}\|_{W^{m-1,p}}^{p}  \right)
            = 0.
\end{align*}
To handle the second term in \eqref{eq:FracEspConvWeakerNormProd}
we refer back to \eqref{eq:EvolutionIndividualTermsProdv} and the
estimates in \eqref{eq:estvJ2Wm1}--\eqref{eq:estvJ3Wm1}.  The stochastic
terms involving ${J}_{3}$ are handled in a similarly to \eqref{eq:vujProdEstvStocTermsvHard}
(and also cf.~\eqref{eq:MFJonesConsultsForBDG} above).  Combining these observation,
using the Gr\"onwall inequality and the properties of $F_{\epsilon}$ we finally infer:
\begin{align}
   \lim_{k \rightarrow \infty} \sup_{j \geq k} \E \left( \sup_{t \in [0, \tau^{T}_{j,k}]} \|u^{j} - u^{k}\|_{W^{m-1,p}}^{p}  \right) = 0.
   \label{eq:FracEspConvWeakerNorm}
\end{align}
With this final observation in place we have now established \eqref{eq:AnnoyingProdEstGoal} and hence
the first requirement \eqref{eq:strngTypeCompCondXmp} of Lemma~\ref{thm:absConv}.

To establish the second condition  \eqref{eq:GrowthAsmpXmp} required by Lemma~\ref{thm:absConv}, we return to
\eqref{eq:LpItoFormalSoln}.  We find that, for any $k \geq 1$ and $S > 0$
\begin{align*}
	\sup_{t \leq [0, \tau^{T}_{k} \wedge S]} \|u^{k}(t)\|_{W^{m,p}}^{p}
	 \leq  \|u^{k}_{0}\|_{W^{m,p}}^{p}
	 + \sum_{| \alpha| \leq m} \int_{0}^{\tau^{T}_{k} \wedge S}
	 	|I^{\alpha}_{1} + I^{\alpha}_{2}| dt
	+  \sup_{t \leq [0, \tau^{T}_{k} \wedge S]}
	\left| \int_{0}^{t}   \sum_{| \alpha| \leq m} I^{\alpha}_{3} \dW \right|,
\end{align*}
and hence
\begin{align}
  \Prb &\left(
  \sup_{t \leq [0, \tau^{T}_{k} \wedge S]} \|u^{k}(t)\|_{W^{m,p}}^{p}
  > \|u^{k}_{0}\|_{W^{m,p}}^{p} + 1 \right) 
  \notag \\
  &\qquad \qquad \qquad
  \leq
    \Prb \left(
    \sum_{| \alpha| \leq m} \int_{0}^{\tau^{T}_{k} \wedge S}
	 	|I^{\alpha}_{1} + I^{\alpha}_{2} | dt
  > \frac{1}{2} \right)
  +
      \Prb \left(
      \sup_{t \leq [0, \tau^{T}_{k} \wedge S]}
	\left| \int_{0}^{t}   \sum_{| \alpha| \leq m} I^{\alpha}_{3} dW \right|
  > \frac{1}{2} \right).
  \label{eq:SmallGrowthAtZeroSplit}
\end{align}
For the first term on the right of \eqref{eq:SmallGrowthAtZeroSplit}
we apply the estimates in \eqref{eq:J1EstLp}--\eqref{eq:ItoCorTemEstsLp} and then the Chebyshev Inequality and find
\begin{align}
    \Prb  \left(
    \sum_{| \alpha| \leq m} \int_{0}^{\tau^{T}_{k} \wedge S}
	 	|I^{\alpha}_{1} + I^{\alpha}_{2} | dt
  > \frac{1}{2} \right)
  \leq&
      \Prb \left(
     \int_{0}^{\tau^{T}_{k} \wedge S}
	C (\locCon(\|u^{k}\|_{L^\infty})^2 + \|u^{k}\|_{W^{m,p}})\|u^{k}\|_{W^{m,p}}^{p} dt
  > \frac{1}{2} \right) 
  \notag\\
  \leq& 
   C \E      \int_{0}^{\tau^{T}_{k} \wedge S}
	 (\locCon(\|u^{k}\|_{L^\infty})^2+ \|u^{k}\|_{W^{m,p}})\|u^{k}\|_{W^{m,p}}^{p}  dt
     \leq C S \label{eq:SmallGrowthAtZeroEst1Cheb},
\end{align}
where the constant $C = C(m,p, M, \locCon,\DD)$ is independent of
$k$ and $S$. With Doob's inequality, the It\={o} Isometry
and the integral Minkowski inequality we estimate the second term
\begin{align}
      \Prb \left(
      \sup_{t \leq [0, \tau^{T}_{k} \wedge S]}
	\left| \int_{0}^{t}   \sum_{| \alpha| \leq m} I^{\alpha}_{3} {\dW} \right|
  > \frac{1}{2} \right) 
  \leq& 4
	\E \left( \int_{0}^{\tau^{T}_{k} \wedge S}   \sum_{| \alpha| \leq m} I^{\alpha}_{3} {\dW} \right)^{2} \notag\\
  \leq& C
	\E  \int_{0}^{\tau^{T}_{k} \wedge S}
	\sum_{| \alpha| \leq m}
	\sum_{l \geq 1} \left(
       \int_{\DD}   \partial^\alpha u^{k}  \cdot
                       \partial^\alpha P\sigma_l(u^{k}) |\partial^{\alpha} u^{k}|^{p -2}\,dx
       \right)^{2}
	dt \notag\\
  \leq& C
	\E  \int_{0}^{\tau^{T}_{k} \wedge S} \locCon(\|u^{k}\|_{L^\infty})^2
	(1+ \|u^{k}\|_{W^{m,p}}^{2p})
	dt  \leq C S
\label{eq:SmallGrowthAtZeroEst2Doob},
\end{align}
where again the constant $C$ is independent of $S$ and $k$.  With
\eqref{eq:SmallGrowthAtZeroSplit}--\eqref{eq:SmallGrowthAtZeroEst2Doob} we now conclude the proof of the second item in Lemma~\ref{thm:absConv}, i.e.~\eqref{eq:GrowthAsmpXmp}. 

Having finally established both \eqref{eq:strngTypeCompCondXmp} and \eqref{eq:GrowthAsmpXmp},
we apply Lemma~\ref{thm:absConv} to infer the existence of a strictly positive stopping time $\tau$,
a subsequence $\{u^{j_l}\}_{l\geq 1}$ of $\{u^j\}_{j\geq 1}$, and a predictable process $u$ such that, up to 
a set of measure zero, $u^{j_k}$ converges
to $u$ in $C(0,\tau; X_{m,p})$ and $\sup_{t \in [0,\tau]} \| u\|_{W^{m,p}} \leq C
< \infty$.  We may infer that $(u, \tau)$ is a local pathwise solution of \eqref{eq:E:1}--\eqref{eq:E:3}
in the sense of Definition~\ref{def:PathwiseSol}.  Note that, in order to initially obtain this $u$ we have had
impose the almost sure bound on the initial data, $u_0$ in \eqref{eq:UniformDataBnd}.  This restriction
is easily removed with the cutting argument as employed in Section~\ref{sec:compact} (cf. 
\eqref{eq:cutThatBitchUpSol}--\eqref{eq:cutThatBitchUpST}).   We may pass from the case of 
local to maximal pathwise solutions as given in Definition~\ref{def:MaxandGlobalSol}
via maximality arguments similar to those at the end of Section~\ref{sec:compact},  in
\eqref{eq:BlowUpAtMaxTime}. Recall that this maximality argument involves considering the set of all stopping times up to which the solution exists. We then show by contradiction that the supremum of all these stopping times yields the maximal time of existence of the solution (see Section~\ref{sec:compact} for further details). The  proof of Theorem~\ref{thm:LocalExistence} is now complete.

\section{Global existence in the two-dimensional case for additive noise}
\label{sec:2dglobal} \setcounter{equation}{0}

In this section we establish the global existence of solutions to \eqref{eq:E:1}--\eqref{eq:E:3} in dimension two forced by an additive noise.
Note that, while the local existence of solutions for  \eqref{eq:E:1}--\eqref{eq:E:3} in the case of a general $\omega$ dependent additive noise 
(cf. \eqref{eq:fuckedAdditiveNoise} above), is not covered under the proof of local existence given here, equations with additive noise 
can be treated ``pathwise'' via a simple change of variables.  In this way the local existence follows from more classical arguments.
See Remark~\ref{rmk:AddNoiseIsFucked} above and the proof of Lemma~\ref{lemma:2D:vorticity} below.

Recalling the a priori estimates in Section~\ref{sec:apriori}, we have that,
for any $m > d/p + 1$,
\begin{align}
  d \|u\|_{W^{m,p}}^{p} = X dt + Z \dW,  \label{eq:BasicWmpChallenge}
\end{align}
where $X$ and $Z$ are defined according to \eqref{eq:LpItoFormalSoln}.  Making use of the
estimates in \eqref{eq:J1EstLp}--\eqref{eq:ItoCorTemEstsLp}, we have
\begin{align}
  |X| &\leq C(1+ \|u\|_{W^{1,\infty}})\|u\|_{W^{m,p}}^{p}  + C\|\sigma\|_{\WB^{m,p}}^p,  \label{eq:UniversalBndDriftTerms}
\end{align}
for some universal constant $C = C(m,d, \DD)$.   For $Z$ we observe with similar
estimate to \eqref{eq:MFJonesConsultsForBDG} that
\begin{align}  
  \| Z \|_{L_{2}} &\leq 
  \left(\sum_{k \geq 1} \left(
       \int_{\DD}   \partial^\alpha u  \cdot
                       \partial^\alpha P \sigma_k |\partial^{\alpha} u|^{p -2}\,dx\right)^{2} \right)^{1/2}
  \leq C\|\sigma\|_{\WB^{m,p}} \| u \|_{W^{m,p}}^{p-1}. \label{eq:UniversalBndDiffTerms}
\end{align}
Thus, in view of \eqref{eq:UniversalBndDriftTerms}--\eqref{eq:UniversalBndDiffTerms}, to close the estimates for \eqref{eq:BasicWmpChallenge} we make use
of the Beale-Kato-Majda type inequality
\begin{align}
	\Vert u\Vert_{W^{1,\infty}}
	\leq C_{2} \Vert u \Vert_{L^{2}}
		+ C_{2} \Vert \curl u \Vert_{L^{\infty}} 	
			\left( 1 + \log^{+} \left( \frac{\Vert u \Vert_{W^{m,p}}}{ \Vert \curl u \Vert_{L^{\infty}}}  \right) \right),
	\label{eq:Log-Sobolev:Lp}
\end{align}
where $C_{2}$ is a universal 
constant depending only on $\DD$, $m$, $p$.  See e.g. \cite{Ferrari1993}
for the simply-connected bounded domain case.
As such the proof of global existence requires us to obtain
uniform bound on the vorticity of the solution in $L^{\infty}$ and also for $\|u\|_{L^{2}}$
and to establish a stochastic analogue of the $\log$-Gr\"{o}nwall lemma.  The latter is developed in Appendix~\ref{sec:Gronwal:SODE} below (and see also related results in \cite{FangZhang2005}).

In order to carry out suitable estimates for $w =\curl u$ we apply $\nabla^{\perp}=(\partial_{2},-\partial_{1})$ to \eqref{eq:EulerLocTmInt} and obtain the evolution:
\begin{align}
&dw + u \cdot \nabla w dt = \rho {\dW},\label{eq:vorticity}\\
&w = \nabla^{\perp} \cdot u,\ \nabla \cdot u =0,\label{eq:vorticity:2}
\end{align}
where for ease of notation we denoted $\rho =
\nabla^{\perp}\cdot\sigma$.
Note that crucially, in contrast to the three
dimensional case, no vortex stretching term $w \cdot \nabla u$  appears
in \eqref{eq:vorticity}.    For $w$ we now establish the following result:

\begin{lemma}[\bf Non-blow-up of the energy and the supremum of vorticity]\label{lemma:2D:vorticity}
Fix $m > 2/p +1$, consider any $\sigma$ that satisfies \eqref{eq:fuckedAdditiveNoise}, 
and any $u_0 \in X_{m,p}$. 
Take $(u,\xi)$ be the maximal solution corresponding to this $\sigma$ and $u_0$.
Then we have
\begin{align}
	 \sup_{t \in [0, T \wedge \xi]} \|u\|^2_{L^2}
		+ \sup_{t\in [0,T \wedge \xi]} \Vert w \Vert_{L^{\infty}}  < \infty,
		\label{eq:UniformBndVortandEngery2D}
\end{align}
almost surely,  for each $T > 0$.
\end{lemma}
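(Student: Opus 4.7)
The plan is to establish the two bounds in \eqref{eq:UniformBndVortandEngery2D} separately, with the $L^{2}$-bound feeding into the vorticity bound through the Beale-Kato-Majda inequality. For the energy, I would apply It\^{o}'s formula to $\|u\|_{L^2}^2$ and exploit the cancellations $(u, u\cdot \nabla u) = 0$ and $(u, \nabla \pi) = 0$, which follow from $\nabla \cdot u = 0$ and $u \cdot n|_{\partial \DD} = 0$. Since $\sigma$ is independent of $u$ in the additive setting \eqref{eq:fuckedAdditiveNoise}, this reduces to
\begin{align*}
d\|u\|_{L^2}^2 = \|P\sigma\|_{L_{2}(\UU, L^2)}^2\, dt + 2(u, \sigma)\, \dW,
\end{align*}
from which a BDG-plus-Gr\"onwall argument yields the almost sure bound on $\sup_{t \in [0, T \wedge \xi]} \|u(t)\|_{L^2}^2$, using that $\|\sigma\|_{\WB^{0,2}}$ is locally integrable in time by \eqref{eq:fuckedAdditiveNoise}.

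For the vorticity bound, the key step is the classical change of variables $\tilde{w} := w - \eta$, where $\eta(t) := \int_0^t \rho\, \dW$ with $\rho = \nabla^\perp \cdot \sigma$. By the Banach space stochastic integral theory of Section~\ref{sec:StochAnal}, the hypothesis \eqref{eq:fuckedAdditiveNoise}, and the embedding $W^{m,p}(\DD) \hookrightarrow W^{1,\infty}(\DD)$ valid for $m > 2/p + 1$, one verifies that $\eta \in C([0,T]; W^{m,p}(\DD))$ almost surely, so that $\|\eta\|_{L^\infty}$ and $\|\nabla \eta\|_{L^\infty}$ are a.s. bounded on $[0,T]$. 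Subtracting the noise-free evolution from \eqref{eq:vorticity} shows that $\tilde{w}$ satisfies the pathwise transport equation
\begin{align*}
\partial_t \tilde{w} + u \cdot \nabla \tilde{w} = -u \cdot \nabla \eta,
\end{align*}
and integrating along the flow of the divergence-free velocity $u$, which is well-defined on $[0, \xi)$ since $u \in W^{m,p} \hookrightarrow W^{1,\infty}$ pathwise, leads to
\begin{align*}
\|w(t)\|_{L^\infty} \leq \|w_0\|_{L^\infty} + \|\eta(t)\|_{L^\infty} + \int_0^t \|u(s)\|_{L^\infty} \|\nabla \eta(s)\|_{L^\infty}\, ds.
\end{align*}

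The main obstacle is to close this integral inequality, since $\|u\|_{L^\infty}$ is not directly controlled by $\|w\|_{L^\infty}$. The plan is to invoke the Beale-Kato-Majda bound \eqref{eq:Log-Sobolev:Lp} to write $\|u\|_{L^\infty} \leq C\|u\|_{L^2} + C\|w\|_{L^\infty}(1 + \log^+(\|u\|_{W^{m,p}} / \|w\|_{L^\infty}))$, where $\|u\|_{L^2}$ is already controlled by the first step. Coupled with the $W^{m,p}$-energy SDE from Section~\ref{sec:apriori}, which after inserting BKM takes the form
\begin{align*}
d\|u\|_{W^{m,p}}^p \leq C \|u\|_{W^{m,p}}^p (1 + \|w\|_{L^\infty} \log^+(\|u\|_{W^{m,p}}))\, dt + C\, dt + dM_t,
\end{align*}
for a local martingale $M_t$, one obtains a stochastic differential inequality of $x \log x$ type, coupled to $\|w\|_{L^\infty}$ through the transport bound above. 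The stochastic analogue of the logarithmic Gr\"onwall lemma developed in Appendix~\ref{sec:Gronwal:SODE} (cf.~\cite{FangZhang2005}) is precisely tailored to handle such estimates, and allows one to conclude that both $\|u(t)\|_{W^{m,p}}^p$ and $\|w(t)\|_{L^\infty}$ remain almost surely finite on $[0, T \wedge \xi]$. The non-vanishing martingale $M_t$ is what prevents a purely pathwise log-Gr\"onwall argument and necessitates the stochastic version.
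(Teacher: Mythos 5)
Your $L^{2}$ step and the change of variables $\tilde{w}= w - \eta$ (the paper's $z$) are exactly the paper's first moves, and the transport estimate
$\|\tilde{w}(t)\|_{L^{\infty}}\leq\|w_{0}\|_{L^{\infty}}+\int_{0}^{t}\|u(s)\|_{L^{\infty}}\|\nabla \eta(s)\|_{L^{\infty}}\,ds$
is correct. The gap is in how you close the $\|u\|_{L^{\infty}}$ factor. You propose to control it by Beale--Kato--Majda and then feed the resulting coupled pair $(\|u\|_{W^{m,p}}^{p},\|w\|_{L^{\infty}})$ into the log-Gr\"onwall lemma of Appendix~\ref{sec:Gronwal:SODE}. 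But that lemma is built to show a \emph{single} process $Y$ stays finite given a family of stopping times $\tau_{R}$ that are known a priori to exhaust $\xi\wedge T$ (condition \eqref{eq:WeHitXiWithFiniteR}). In the paper those $\tau_{R}$ are defined through $\|u\|_{L^{2}}^{2}+\|w\|_{L^{\infty}}$, and the hypothesis \eqref{eq:WeHitXiWithFiniteR} is verified \emph{using} the conclusion of Lemma~\ref{lemma:2D:vorticity}. Using the lemma to prove Lemma~\ref{lemma:2D:vorticity} is circular. If instead you try to run a genuine coupled log-Gr\"onwall on the pair, the structure you get is of the dangerous form $\dot{y}\lesssim w\,y\log y$, $\dot{w}\lesssim w\log y$, which collapses to a Riccati-type equation and can blow up in finite time even with bounded coefficients --- the multiplicative coupling through $w\log^{+}y$ prevents closure.

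The paper sidesteps this entirely by never invoking BKM inside this lemma. In two dimensions one has the much cheaper bound
\begin{align*}
\Vert u \Vert_{L^{\infty}} \leq C \Vert \nabla u \Vert_{L^{4} } + C \Vert u \Vert_{L^{2}}
\leq C \Vert w \Vert_{L^{4}} + C \Vert u \Vert_{L^{2}}
\end{align*}
via Biot--Savart and the Sobolev embedding $W^{1,4}(\DD)\hookrightarrow L^{\infty}(\DD)$, so $\|u\|_{L^{\infty}}$ is controlled by $\|w\|_{L^{4}}$ and the already-established energy. An almost sure bound on $\sup_{[0,\xi\wedge T]}\|w\|_{L^{4}}^{4}$ is then obtained directly by applying the It\^{o} formula in $L^{4}$ to the vorticity equation \eqref{eq:vorticity}: since there is no vortex-stretching in 2D, the nonlinear term cancels, and a Burkholder--Davis--Gundy/stopping-time argument handles the stochastic integral. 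This decouples the vorticity bound from $\|u\|_{W^{m,p}}$, gives a direct closure of the transport inequality, and is what actually proves the lemma. BKM and the stochastic log-Gr\"onwall lemma enter only afterward, in the proof of Theorem~\ref{thm:GlobalExistence2DAdditive}, with Lemma~\ref{lemma:2D:vorticity} serving as the input that verifies \eqref{eq:WeHitXiWithFiniteR}.
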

\begin{proof}[Proof of Lemma~\ref{lemma:2D:vorticity}]

The bound for $\| u \|_{L^{2}}$ required in \eqref{eq:UniformBndVortandEngery2D}
follows directly in view of the cancelation $(P (u\cdot \nabla u) , u)_{L^2} =0$.
cf. Section~\ref{sec:aprioriL2}.

We turn to estimate the vorticity term \eqref{eq:UniformBndVortandEngery2D}.
Since \eqref{eq:vorticity} is forced with an the additive noise we have the option to introduce the stochastic process
\begin{align}
dz = \rho\, {\dW}, \qquad z(0)=0\label{eq:z:def}
\end{align}
and then consider the evolution of $\tilde{w} := w-z$. The equation for $\tilde{w}$ is the random
partial differential equation 
\begin{align}
&\partial_{t} \tilde{w} + u \cdot \nabla \tilde{w} + u\cdot\nabla z = 0 \label{eq:tilde:w:1}\\
&\tilde{w} = \nabla^{\perp} \cdot u - z,\ \nabla \cdot u =0, \label{eq:tilde:w:2}\\
&\tilde{w}(0) = w_{0}. \label{eq:tilde:w:3}
\end{align}
This system can be treated pathwise with the methods of ordinary calculus.
Multiplying \eqref{eq:tilde:w:1} by $\tilde{w} |\tilde{w}|^{p-2}$ and integrating over $\DD$ we obtain
\begin{align*}
\frac{d}{dt} \Vert \tilde{w} \Vert_{L^{p}} \leq \Vert u \Vert_{L^{p}} \Vert \nabla z \Vert_{L^{\infty}}
\end{align*}
where we have used the divergence-free nature of $u$. Integrating in time and
sending $p$ to $\infty$, the above estimate gives
\begin{align}
\Vert \tilde{w}(t) \Vert_{L^{\infty}} &\leq \Vert w_{0} \Vert_{L^{\infty}} + \int_{0}^{t} \Vert u(s) \Vert_{L^{\infty}} \Vert \nabla z(s) \Vert_{L^{\infty}}\, ds.
	\label{eq:shiftwBndSetsAChallenge}
\end{align}
We can use the two-dimensional
Sobolev embedding and the Biot-Savart law to bound
\begin{align}
\Vert u \Vert_{L^{\infty}} \leq C \Vert \nabla u \Vert_{L^{4} } + C
\Vert u \Vert_{L^{2}} \leq C \Vert w \Vert_{L^{4}} + C \Vert u
\Vert_{L^{2}},\label{eq:2D:u:infty}
\end{align}
where $C = C (\DD)$. Thus, in view of \eqref{eq:shiftwBndSetsAChallenge}--\eqref{eq:2D:u:infty}
and the fact that $w = \tilde{w} + z$, the proof will be complete once we obtain suitable bounds for
the quantities $\|w\|_{L^4}$ and $\|\nabla z\|_{L^{\infty}}$.

In order to obtain bounds on $\| w \|_{L^{4}}$
we apply the It\={o} formula in $L^{4}$ to \eqref{eq:vorticity} an obtain
\begin{align}
	d \|w\|^{4}_{L^{4}} =&
		\int_{\DD} \left(  2  |w|^{2} \sum_{k \geq 1} |\rho_{k}|^{2}
			+ 4 \sum_{k \geq 1} (w  \rho_{k})^{2}
						\right) dx dt
		+ 4  \sum_{k \geq 1} \left( \int_{\DD} |w|^{2} w  \rho_{k} dx \right) dW_{k}, \label{eq:AdditiveFIXFIX}
\end{align}
where we have used the cancelation $(u\cdot \nabla w, w |w|^{2})_{L^2} = 0$.
Let 
\begin{align}
\sigma_R = \inf\left\{t \geq 0 \colon \|w(t)\|_{L^4} > R \right\} \wedge  \inf\left\{t \geq 0 \colon \int_0^t \|\rho\|_{\WB^{0,4}}^2 ds > R \right\} \wedge \xi. \label{eq:SIGMARFIXFIX}
\end{align}
From \eqref{eq:fuckedAdditiveNoise} and the definition of $\xi$ as the maximal time of existence, it follows that $\sigma_{R} \to \xi$ almost surely as $R \to \infty$. In addition, for every $T>0$ and a.s. $\omega$, if $R$ is sufficiently large we have that $\sigma_{R}\wedge T = \xi \wedge T$.

Upon taking a supremum in time in \eqref{eq:AdditiveFIXFIX}, and applying the H\"older
inequality in the last term, we obtain on the set $\{ \sigma_{R} >0 \}$
\begin{align*}
	&\sup_{t\in[0,\sigma_R \wedge T]} \| w(t) \|_{L^{4}}^{4}\\
	&\ \leq \| w_{0} \|_{L^{4}}^{4}
               + 4 \sup_{t\in[0,\sigma_R \wedge T]} \left| \sum_{k\geq 1}
               \int_{0}^{t} \int_{\DD} |w|^{2} w \cdot \rho_{k}\, dx dW_{k}\right|
	+ 4
	\int_{0}^{\sigma_R \wedge T}
	\| w(t) \|_{L^{4}}^{2} \|\rho \|_{\WB^{0,4}}^2 dt\\
	&\ \leq \| w_{0} \|_{L^{4}}^{4}
               + 4 \sup_{t\in[0,\sigma_R \wedge T]} \left| \sum_{k\geq 1}
               \int_{0}^{t} \int_{\DD} |w|^{2} w \cdot \rho_{k}\, dx dW_{k}\right|
	+ \frac{1}{4} \sup_{t \in [0, \sigma_R \wedge T]} \| w(t) \|_{L^{4}}^{4}
	+ C \left(\int_{0}^{\sigma_R \wedge T}
	 \|\rho \|_{\WB^{0,4}}^2 dt \right)^{2} .
\end{align*}
To estimate the stochastic integral terms we find with the Burkholder-Davis-Gundy
inequality, \eqref{eq:BDG} that
\begin{align*}
    &\E \sup_{t\in[0,\sigma_R \wedge T]} \left| \indFn{\sigma_{R}>0} \sum_{k\geq 1}
               \int_{0}^{t} \int_{\DD} |w|^{2} w \cdot \rho_{k}\, dx dW_{k}\right|\notag\\
   &\qquad \leq
   C \E \left(\indFn{\sigma_{R}>0}
               \int_{0}^{\sigma_R \wedge T} \sum_{k\geq 1}  \left( \int_{\DD} |w|^{3} |\rho_{k}| \, dx \right)^2 dt \right)^{1/2}\\
               & \qquad \leq
   C \E \left(\indFn{\sigma_{R}>0}
               \int_{0}^{\sigma_R \wedge T}  \left( \int_{\DD} |w|^{3}   \left( \sum_{k\geq 1} |\rho_{k}|^2 \right)^{1/2} \, dx \right)^2 dt \right)^{1/2}\\
               &\qquad \leq
   C \E \left(\indFn{\sigma_{R}>0}
               \int_{0}^{\sigma_R \wedge T} \|w\|_{L^4}^{6} \|\rho \|_{\WB^{0,4}}^2 dt \right)^{1/2}\\
                              & \qquad \leq
   \frac{1}{4}  \E \left( \indFn{\sigma_{R}>0}\sup_{t \in [0, \sigma_R \wedge T]} \|w\|_{L^4}^{4} \right)+
             C \E  \left( \indFn{\sigma_{R}>0} \int_{0}^{\sigma_R \wedge T}  \|\rho \|_{\WB^{0,4}}^2 dt \right)^{2}.
\end{align*}
Combining the above observations we find
$
	\E ( \indFn{\sigma_{R}>0}\sup_{t \in [0, \sigma_R \wedge T]} \|w\|_{L^4}^{4} ) \leq C,
$
by recalling the definition of $\sigma_{R}$ (cf.~\eqref{eq:SIGMARFIXFIX}),
for some $C>0$ which depends on $R$. Since $\| w_{0} \|_{L^{4}} < \infty$ almost surely we conclude that $\sup_{t \in [0, \sigma_R \wedge T]} \|w\|_{L^4}^{4} < \infty$ almost surely for all $R>0$. Thus we finally conclude that for almost every $\omega$ that 
\begin{align}
\sup_{t \in [0, \xi \wedge T]} \|w\|_{L^4}^{4} < \infty.\label{eq:L4:ClusterFuck}
\end{align}

We now turn to make estimates for $z$.
In view of the Sobolev embedding $W^{1,\infty} \subset W^{m,p}$
and the definition of $z$, given in \eqref{eq:z:def}, we estimate
using \eqref{eq:BDG}
\begin{align*}
\E \sup_{t\in[0,T]} \left\Vert \int_{0}^{t} \rho {\dW} \right\Vert_{W^{m,p}}^p
 \leq& \sum_{|\alpha| \leq m} \int_{\DD} \E \sup_{t \in [0,T]}  \left| \int_0^t \partial^{\alpha} \rho {\dW} \right|^p dx\\
\leq& C\sum_{|\alpha| \leq m} \int_{\DD} \E   \left( \int_0^T |\partial^{\alpha} \rho|_{L_2}^2 dt \right)^{p/2} dx
\leq C \E \int_0^T \|\rho\|_{\WB^{m,p}}^p dt.
\end{align*}
We therefore infer that
\begin{align}
	\E \sup_{t\in[0,T]} \Vert z(t) \Vert_{W^{1,\infty}}^2
	\leq C\left( \E \sup_{t\in[0,T]} \left\Vert \int_{0}^{t} \rho {\dW} \right\Vert_{W^{m,p}}^p \right)^{p/2}
	<\infty. \label{eq:bndForRandomShiftProcess}
\end{align}

Taking the supremum in time over $[0,T \wedge \xi]$ for \eqref{eq:shiftwBndSetsAChallenge}, and applying \eqref{eq:2D:u:infty}, we obtain for almost every $\omega$ that 
\begin{align}
&\sup_{t\in[0,T \wedge \xi]} \Vert \tilde{w}(t) \Vert_{L^{\infty}} \notag\\
& \quad \leq  \Vert w_{0} \Vert_{L^{\infty}} + C\, \left( \sup_{t\in[0,T \wedge \xi]}\Vert u(t) \Vert_{L^{2}} \int_{0}^{T} \Vert \nabla z(t) \Vert_{L^\infty} \, dt \right)
+ C\, \left( \sup_{t\in[0,T \wedge \xi]}\Vert w(t) \Vert_{L^{4}} \int_{0}^{T} \Vert \nabla z(t) \Vert_{L^\infty} \, dt \right)\notag\\
& \quad \leq \Vert w_{0} \Vert_{L^{\infty}}
	+ C\, \left(\sup_{t\in[0,T \wedge \xi]} \Vert u(t) \Vert_{L^{2}}^{2}
		+  \sup_{t\in[0,T \wedge \xi]}\Vert w(t) \Vert_{L^{4}}^{2}
		+ \sup_{t\in[0,T ]} \Vert z(t) \Vert_{W^{1,\infty}}^2 \right),
 \label{eq:before:z:bound}
 \end{align}
where $C$ may depend on $T$.
Given the bounds established in \eqref{eq:L4:ClusterFuck}--\eqref{eq:bndForRandomShiftProcess},
and since by construction $w = \tilde{w} + z$, referring once more to
\eqref{eq:bndForRandomShiftProcess}, the proof of the lemma is now complete.
\end{proof}

With the estimates in Lemma~\ref{lemma:2D:vorticity} in hand we apply the results established
in Appendix~\ref{sec:Gronwal:SODE} below, to show that $(u,\xi)$ is a \emph{global} pathwise solution.

\begin{proof}[Proof of Theorem~\ref{thm:GlobalExistence2DAdditive}]  
We need to verify that the conditions in Lemma~\ref{thm:NonBlowupLogLipp}
are satisfied.  In what follows we will assume, without loss of generality that $\|u_{0}\|_{W^{m,p}} \leq M$,
for some deterministic constant $M>0$.  Indeed, after we obtain global existence in this
special case, the general case, $u_{0}\in X_{m,p}$ a.s, follows from a cutting
argument as in Section~\ref{sec:compact}, see \eqref{eq:cutThatBitchUpSol}--\eqref{eq:cutThatBitchUpST}.

Define the collection of stopping times
\begin{align}
	\tau_R
		:= \inf \left\{ t \geq 0 \colon \|u(t)\|^2_{L^2} + \|w(t)\|_{L^\infty} > R \right\} \wedge \xi,
		\label{eq:ControlingTimesforVortEnergy}
\end{align}
where we recall that $w = \mbox{curl}u$.  Obviously, $\tau_{R}$ is increasing 
in $R$, almost surely.
We need to verify that \eqref{eq:WeHitXiWithFiniteR} is satisfied.  In other words,
we need to show
\begin{align}
\Prb \left( \bigcap_R \{  \tau_R  < T \wedge 
\xi \}  \right) =0,
\label{eq:Epsiode4}
\end{align}
for every $T > 0$.  For this purpose we make use of the 
conclusions of Lemma~\ref{lemma:2D:vorticity}. 
Owing to the fact that $\tau_R$ is increasing in $R$
and \eqref{eq:UniformBndVortandEngery2D} we infer
\begin{align*}
\Prb \left( \bigcap_{R>0} \{  \tau_{R}  < T \wedge \xi \} \right)
   &= \lim_{R^* \rightarrow \infty}   \Prb \left( \bigcap_{0 < R \leq R*} \{  \tau_R  < T \wedge \xi \} \right)
   =  \lim_{R^* \rightarrow \infty}   \Prb \left(  \tau_{R^*}  < T \wedge \xi  \right)\\
  &\leq  \lim_{R^* \rightarrow \infty}
   \Prb \left( \sup_{ t \in [0, T \wedge \xi]}  \left(\|u\|^2_{L^2} + \|w\|_{L^\infty}\right)  > R^*  \right)\\
  &\leq    \Prb \left( \bigcap_{R^{*}> 0} \left\{\sup_{ t \in [0, T \wedge \xi]}  \left(\|u\|^2_{L^2} + \|w\|_{L^\infty}\right)  > R^*  \right\}\right)= 0,
\end{align*}
for every $T > 0$.

Returning to the a priori estimates \eqref{eq:BasicWmpChallenge}--\eqref{eq:UniversalBndDiffTerms} we 
now define the quantities
\begin{align}
	Y = 1 + \|u\|^{p}_{W^{m,p}}, \quad
	\eta = (1+ \|\sigma\|_{\WB^{m,p}})^p.
	\label{eq:MainProcessforLogLemmaDef}
\end{align}
Of course, $Y$ satisfies $dY = X dt + Z \dW$. Combining \eqref{eq:UniversalBndDriftTerms},
\eqref{eq:Log-Sobolev:Lp},  and the definition of $\tau_{R}$,
we find that for each $R$ there exists a deterministic constant $K_R$ such that on $[0,\tau_{R}]$ we have
\begin{align}
  |X| 
  &\leq C \left(1+  \Vert u \Vert_{L^{2}}
		+ \Vert w \Vert_{L^{\infty}} 	
			\left( 1 + \log^{+} \left( \frac{\Vert u \Vert_{W^{m,p}}}{ \Vert w \Vert_{L^{\infty}}}  \right) \right) \right)
			\|u\|_{W^{m,p}}^{p}  +  C\|\sigma\|_{\WB^{m,p}}^p \notag \\
 &\leq C \left(2 +  R^{1/2} + R 
		+ \Vert w \Vert_{L^{\infty}} 	
			\log^{+} \Vert u \Vert_{W^{m,p}} \right)
			Y  + C\|\sigma\|_{\WB^{m,p}}^p \notag \\
  &\leq K_R (1+ \log Y ) Y + C (1+ \|\sigma\|_{\WB^{m,p}})^p , \label{eq:Epsiode5}
\end{align}
and from \eqref{eq:UniversalBndDiffTerms} we in addition obtain
\begin{align}
  \| Z \|_{L_{2}} &\leq C\|\sigma\|_{\WB^{m,p}} \| u \|_{W^{m,p}}^{p-1} \leq C (1+\|\sigma\|_{\WB^{m,p}}) Y^{(p-1)/p} .
  \label{eq:Epsiode6}
\end{align}

We now have all the ingredients need to apply Lemma~\ref{thm:NonBlowupLogLipp}.  
More precisely we take $Y$ and $\eta$ according to \eqref{eq:MainProcessforLogLemmaDef}, 
$r = 1/p$,  $\xi$ as the maximal time of existence of $u$
and  $\tau_R$ 
according to \eqref{eq:ControlingTimesforVortEnergy}.  Having established
\eqref{eq:Epsiode4}--\eqref{eq:Epsiode6} and recalling the 
standing assumption \eqref{eq:fuckedAdditiveNoise} we infer from Lemma~\ref{thm:NonBlowupLogLipp}
that indeed $\xi = \infty$.  The proof of Theorem~\ref{thm:GlobalExistence2DAdditive} is therefore
complete.
\end{proof}

\section{Global existence for linear multiplicative noise}
\label{sec:GlobalExistenceLinMultNoise}
\setcounter{equation}{0}

In this section we consider the stochastic Euler equations in two and three dimensions, with {\em linear multiplicative noise}
\begin{align}
  &du + P\left( u \cdot \nabla u \right) dt =  \alpha u dW,\label{eq:ELinM}
\end{align}
where in this case $\alpha \in \RR$ and $W$ is a {\em single $1D$ Brownian motion}.
This forcing regime is covered under the theory developed in the previous sections, so we are guaranteed the
existence of a local pathwise solution in the sense of Definition~\ref{def:PathwiseSol} (cf.~Theorem~\ref{thm:LocalExistence}).

As in the case of an additive noise above we may transform \eqref{eq:ELinM} to an random
PDE.  To this end consider the (real valued) stochastic process
\begin{align}
\gamma(t) = e^{-\alpha W_{t}}. \label{eq:gamma:def}
\end{align}
Due to the It\={o} formula we find the $\gamma$ satisfies
$$
 d \gamma = - \alpha \gamma dW + \frac{1}{2} \alpha^{2}\gamma dt, \quad \gamma(0)=1.
$$
By apply the It\={o} product rule we therefore find that
\begin{align}
    d(\gamma u) &=\gamma du +  u d\gamma  + d\gamma d u\notag\\
    &= -  \gamma P\left( u \cdot \nabla u \right) dt  +
        \alpha \gamma u dW   -  \alpha \gamma u dW
        +  \frac{1}{2} \alpha^{2}\gamma u dt
        - \alpha^{2} \gamma u dt \notag \\
     &= -  \gamma P\left( u \cdot \nabla u \right) dt  - \frac{1}{2} \alpha^{2} (\gamma u) dt. \label{eq:GammaComp}
\end{align}
By defining $v = \gamma u$ we therefore obtain the system
\begin{align}
  & \partial_{t} v +  \frac{\alpha^{2}}{2} v  + \gamma^{-1} P( v \cdot \nabla v) = 0, \label{eq:Notes:21}\\
  & v(0) = u_{0}. \label{eq:Notes:21b}
\end{align}
Fix $p \geq 2$, and  $m > d/p + 1$ throughout the rest of this section. First, using the standard
 estimates on the nonlinear term (cf.~\eqref{eq:J1} for
 $p=2$, or \eqref{eq:J1EstLp} for $p>2$), we may obtain
\begin{align}
	\frac{d}{dt} \Vert v \Vert_{W^{m,p}} + \frac{\alpha^{2}}{2} \Vert v \Vert_{W^{m,p}}
	\leq C_{1} \gamma^{-1} \Vert v \Vert_{W^{1,\infty}} \Vert v \Vert_{W^{m,p}}
	\label{eq:Notes:22}
\end{align}
for a positive constant $C_{1} = C_{1}(m,p,\DD)$.
In order to bound the right side of \eqref{eq:Notes:22} we recall
the Beale-Kato-Majda-type inequality (cf.~\eqref{eq:Log-Sobolev:Lp})
\begin{align}
	\Vert v \Vert_{W^{1,\infty}}
	\leq C_{2} \Vert v \Vert_{L^{2}}
		+ C_{2} \Vert w \Vert_{L^{\infty}} 	
			\left( 1 + \log^{+} \left( \frac{\Vert v \Vert_{W^{m,p}}}{ \Vert w \Vert_{L^{\infty}}}  \right) \right)
	\label{eq:Notes:25}
\end{align}
where the constant $C_{2} = C_{2}(m,p,\DD)$ is fixed, and as usual $w = \curl v$. Due to the cancellation property $( P (v \cdot \nabla v), v) =0$,  it follows directly from \eqref{eq:Notes:21} that
\begin{align}
\Vert v(t) \Vert_{L^{2}} \leq \Vert v_{0} \Vert_{L^{2}} e^{-\alpha^{2}t /2}
\label{eq:ExpoL2}
\end{align}
for all $t\geq 0$. On the other hand, obtaining an a priori estimate on $\Vert w (t) \Vert_{L^{\infty}}$ is more delicate.
For this purpose, we return to  \eqref{eq:Notes:21} and consider the equation satisfied by $w = \curl v$, i.e.
\begin{align}
	\partial_{t} w +\frac{\alpha^{2}}{2} w + \gamma^{-1} v \cdot \nabla w
	=
	\begin{cases}
	0, & \textrm{ for } d = 2,\\
	\gamma^{-1} w \cdot \nabla v, & \textrm{ for } d = 3.
	\end{cases}
	\label{eq:Notes:23}
\end{align}
Multiplying \eqref{eq:Notes:23} by $w |w|^{p-2}$, integrating in $x$, and making use of the divergence-free nature of $v$,  we obtain
\begin{align*}
	\frac{1}{p} \frac{d}{dt} \Vert w \Vert_{L^{p}}^{p} + \frac{\alpha^{2}}{2} \Vert w \Vert_{L^{p}}^{p}
	\leq
	\begin{cases}
	0, & \textrm{ for } d = 2,\\
	\gamma^{-1} \Vert v \Vert_{W^{1,\infty}} \Vert w \Vert_{L^{p}}^{p},  & \textrm{ for } d = 3.
	\end{cases}
\end{align*}
Upon canceling $\|w\|_{L^{p}}^{p-1}$, and sending $p\rightarrow \infty$ in the above estimate we have
\begin{align}
	\frac{d}{dt} \Vert w \Vert_{L^{\infty}} + \frac{\alpha^{2}}{2} \Vert w \Vert_{L^{\infty}}
	\leq
	\begin{cases}
	0, & \textrm{ for } d = 2,\\
	\gamma^{-1} \Vert v \Vert_{W^{1,\infty}} \Vert w \Vert_{L^{\infty}},  & \textrm{ for } d = 3.
	\end{cases}
	 \label{eq:Notes:24}
\end{align}
In view of the different bounds obtained in \eqref{eq:Notes:24} in $2D$ versus $3D$, we now treat the two cases separately.
For this purpose it is convenient to first fix the Sobolev embedding constant $C_{3} = C_{3}(m,p,\DD)$ such that
\begin{align}
\Vert v \Vert_{L^{2}} + \Vert v \Vert_{W^{1,\infty}} \leq C_{3} \Vert v \Vert_{W^{m,p}}
\label{eq:fix:Sobolev:constant}
\end{align}
and to let $\bar C = C_{1} C_{2} + C_{3} + 1$.

\subsection{The two-dimensional case}\label{sec:GlobalExistence:2D}
In two dimensions we prove the global in time existence of smooth pathwise solutions, as stated in Theorem~\ref{thm:GlobalExistence2D3DLinearMultiplicative}.
From \eqref{eq:Notes:24} we immediately obtain that the function
$$
z(t) = \Vert w(t) \Vert_{L^{\infty}} \exp\left( \frac{\alpha^{2} t}{2} \right)
$$
is such that
\begin{align}
z(t)\leq z(0) = \Vert w_{0} \Vert_{L^{\infty}} \label{eq:2D:Linfty:exp:decay}
\end{align}
for all $t \geq 0$. Therefore, letting
$$
y(t ) = \Vert v(t) \Vert_{W^{m,p}} \exp\left( \frac{\alpha^{2} t}{2} \right)
$$
we obtain from \eqref{eq:Notes:22}--\eqref{eq:ExpoL2}, and \eqref{eq:2D:Linfty:exp:decay} that
\begin{align}
\frac{d y}{dt}
&\leq \bar C \gamma^{-1} y \left( \| v(t)\|_{L^{2}} + \|w(t)\|_{L^{\infty}} 
	\left( 1 + \log^{+} \left( \frac{y(t)}{\|w(t)\|_{L^{\infty}} \exp(\alpha^{2}t/2)}\right) \right)\right) \notag\\
&\leq \bar C \gamma^{-1} \exp\left(- \frac{\alpha^{2} t}{2} \right) y
	 \left( \Vert v_{0} \Vert_{L^{2}} + \Vert w_{0} \Vert_{L^{\infty}} + z \log^{+} \left( \frac{y}{z}\right) \right). \label{eq:2d:global:ODE}
\end{align}
A short computation reveals that $z \log^+ (y/z) \leq 1/e + z \log^+(y)$. 
In view of \eqref{eq:2D:Linfty:exp:decay}, and defining 
$
\rho_\alpha(t) = \exp\left( \alpha W_{t} - \alpha^{2} t / 2\right)
$
estimate \eqref{eq:2d:global:ODE} gives
\begin{align}
\frac{d y}{dt}&\leq \bar C \rho_\alpha y
	 \left( \Vert v_{0} \Vert_{L^{2}} + \Vert w_{0} \Vert_{L^{\infty}} + 1 + \Vert w_{0} \Vert_{L^{\infty}} \log^{+} (y) \right).
	 \label{eq:2d:global:ODE:1}
\end{align}
By the law of iterated logarithms we have
$
	\sup_{t\geq 0} \rho_{\alpha} < \infty \mbox{ a.s.}
$
for every $\alpha>0$. Hence, \eqref{eq:2d:global:ODE:1} implies
\begin{align}
\frac{dy}{dt} \leq  A  y
	 \left(  1 +  \log^{+} (y) \right). \label{eq:2d:global:ODE:2}
\end{align}
where
\begin{align}
A  = \bar C \left( \sup_{t\geq 0} \rho_{\alpha} \right) ( \Vert v_{0} \Vert_{L^{2}} + \Vert w_{0} \Vert_{L^{\infty}} + 1 ). \label{eq:A:def:def}
\end{align}
Let $Y(t) = \log (1 +y(t))$. We obtain from \eqref{eq:2d:global:ODE:2} that 
\begin{align*}
\frac{dY}{dt}\leq A \left( 1 + Y(t) \right)
\end{align*}
for all $t\geq 0$. This gives $Y(t) \leq Y(0) \exp(tA) + t A \exp(t A)$, and hence
\begin{align}
y(t) \leq (1+y_{0})^{\exp\left(t A \right) }
\exp\left( tA  \exp\left(t A\right)  \right) \label{eq:app:CC:2}.
\end{align}
 
Recalling the definition of $y(t)$, we note that $\| u(t)\|_{W^{m,p}} = \gamma^{-1}(t) y(t) \exp(-\alpha^2 t/2) = \rho_\alpha(t) y(t)$. 
Thus, estimate \eqref{eq:app:CC:2} shows that
\begin{align*}
 \| u(t)\|_{W^{m,p}} \leq \rho_\alpha(t)  (1+\| u_{0} \|_{W^{m,p}})^{\exp\left(t A \right) }
\exp\left( tA  \exp\left(t A\right)  \right)
\end{align*}
with $A$ as defined in \eqref{eq:A:def:def}.
Therefore, for all $T>0$ we have proven 
\begin{align*}
 \sup_{t \in [0,T\wedge \xi]} \|u\|_{W^{1,\infty}} < \infty, \mbox{ a.s.}
\end{align*}
So that necessarily $(u, \xi)$ is a global pathwise solution, i.e. we have $\xi = \infty$ (cf.~Definition~\ref{def:MaxandGlobalSol}).  
We have thus now established part (i) of Theorem~\ref{thm:GlobalExistence2D3DLinearMultiplicative}.

\subsection{The three-dimensional case}\label{sec:GlobalExistence:3D}

Fix $\alpha>0$.   
Let $(u, \xi)$ be the maximal strong
solution of \eqref{eq:ELinM}.
As in the two-dimensional case, the key ingredient to global regularity is an a priori bound on $\| w\|_{L^\infty}$.
However, due to the presence of the vortex stretching term, in the three-dimensional case we have (cf.~\eqref{eq:Notes:24} above)
\begin{align}
 \frac{d}{dt} \Vert w \Vert_{L^{\infty}} + \frac{\alpha^{2}}{2} \Vert w \Vert_{L^{\infty}}
	\leq \gamma^{-1} \Vert v \Vert_{W^{1,\infty}} \Vert w \Vert_{L^{\infty}}.
	\label{eq:3D:vorticity:bound}
\end{align}
To exploit the damping in \eqref{eq:3D:vorticity:bound}, we now define the stopping time
\begin{align}
\sigma = \inf_{t \geq 0} \left\{ t \colon \gamma^{-1}(t) \Vert v(t) \Vert_{W^{m,p}} \geq \frac{\alpha^{2}}{4 \bar C } \right\}
       = \inf_{t \geq 0} \left\{ t \colon \Vert u(t) \Vert_{W^{m,p}} \geq \frac{\alpha^{2}}{4 \bar C } \right\}
\label{eq:Notes:28}
\end{align}
where $\bar C \geq 1$ is the constant defined above \eqref{eq:fix:Sobolev:constant}. Note that $\sigma < \xi$ on the set $\{ \xi <\infty\}$ (cf.~\eqref{eq:W1inftyBlowUp} and the Sobolev embedding). In order to ensure that $\sigma>0$ a.s. we will at least need to impose the condition 
\begin{align}
\| u_0\|_{W^{m,p}} < \frac{\alpha^2}{4 \bar C}. \label{eq:IC:COND:1}
\end{align}
In fact, in order to close the estimates we shall impose additional assumptions on $u_0$ (cf.~\eqref{eq:THE:IC:CONDITION} below).

Due to the Sobolev embedding, on $[0,\sigma]$ we have
\begin{align}
\gamma^{-1} \Vert w \Vert_{L^{\infty}}
\leq \gamma^{-1} \Vert v \Vert_{W^{1,\infty}}
\leq \frac{\alpha^{2}}{4}. \label{eq:Notes:29}
\end{align}
Hence, by \eqref{eq:3D:vorticity:bound} and \eqref{eq:Notes:29} we obtain
\begin{align}
\frac{d}{dt} \Vert w \Vert_{L^{\infty}} + \frac{\alpha^{2}}{4} \Vert w \Vert_{L^{\infty}} \leq 0
\label{eq:Notes:30}
\end{align}
on $[0,\sigma)$. Therefore, letting
$$
z(t) = \Vert w (t) \Vert_{L^{\infty}}\exp \left( \frac{\alpha^{2} t}{4} \right)
$$
we find from \eqref{eq:Notes:29} and \eqref{eq:Notes:30} that
\begin{align}
z(t) \leq z(0) = \Vert w_{0} \Vert_{L^{\infty}} \leq \frac{\alpha^{2}}{4} \label{eq:Notes:31}
\end{align}
where we also used that $\gamma(0)=1$. Similarly to above, we now let
\begin{align}
y(t) = \Vert v(t) \Vert_{W^{m,p}} \exp \left( \frac{\alpha^{2} t}{4} \right).
\label{eq:yDef3dCase}
\end{align}
By \eqref{eq:Notes:22} and \eqref{eq:Notes:25} we obtain
\begin{align*}
\frac{dy}{dt}
&\leq \bar C \gamma^{-1} y \left( \|v \|_{L^2} + \| w\|_{L^\infty} \left(1 + \log^+ \left( \frac{y}{z} \right) \right) \right).
\end{align*}
Using the decay of $\|v(t)\|_{L^2}$ obtained in \eqref{eq:ExpoL2}, and assumption \eqref{eq:IC:COND:1}, the above estimate implies
\begin{align}
\frac{dy}{dt} 
&\leq \bar C  \gamma^{-1} \exp\left( - \frac{\alpha^{2} t}{4} \right) y
	\left( \| u_0\|_{L^2} + z \left(1 + \log^+ \left( \frac{y}{z} \right) \right) \right) \notag\\
&\leq \bar C  \rho_\alpha \exp\left( - \frac{\alpha^{2} t}{8} \right) y
	\left( \frac{\alpha^{2}}{4} + z  + z \log^{+} \left( \frac{y}{z} \right) \right)
\label{eq:Notes:32}
\end{align}
where we now denote
\begin{align}
\rho_\alpha(t) =  \gamma^{-1}(t) \exp\left( - \frac{\alpha^{2} t}{8} \right) = \exp\left( \alpha W_t - \frac{\alpha^{2} t}{8} \right)
\label{eq:GBM3Dcase}.
\end{align}
To simplify the right side of \eqref{eq:Notes:32}, it is convenient to observe that
\begin{align}
\frac{\alpha^{2}}{4}  + z  + z \log^{+} \left( \frac{y}{z} \right) \leq \bar C  + \alpha^{2} + z \log y \label{eq:Notes:13}
\end{align}
holds whenever $0 < z \leq \alpha^{2}/4$, and $z \leq \bar C  y$ (note that we indeed have these a priori bounds on $z$, due to \eqref{eq:fix:Sobolev:constant} and \eqref{eq:Notes:31}). In order to prove \eqref{eq:Notes:13} we distinguish two cases: $z < y$, and $z/\bar C \leq y \leq z$.
If $z<y$, then $\log^{+}(y/z) = \log(y/z) = \log(y) - \log(z)$. Hence the left side of \eqref{eq:Notes:13} is bounded by
$$
 \frac{\alpha^{2}}{2} + z \log(y) - \indFn{z \in (0,1]}z \log(z) \leq \alpha^{2} + z \log y + \bar C
$$
where we have used the fact that  $0 \leq - z \log(z)  \leq  1/e \leq \bar C$ for all $z\in (0,1]$. This concludes the proof of \eqref{eq:Notes:13} for $y > z$. On the other hand, if $y \leq z$, then $\log^{+}(y/z) = 0$, and hence we need to prove that $\alpha^{2}/4 + z$ is less than the left side of \eqref{eq:Notes:13}.
For this purpose, it is sufficient to prove that
$$
 \bar C + z \log y \geq 0,
$$
for all $y \in [z/ \bar{C}, z]$ and all $z >0$. Indeed, the right side of the above inequality is monotone
increasing in $y$, so the minimum is attained at $y = z/\bar C$,
and it equals $ \bar C + z \log (z/\bar C)$. A simple calculation shows that
$\bar C + z \log(z/\bar C) \geq \bar C - \bar C/e > 0$, for all $z \geq 0$, concluding the proof of \eqref{eq:Notes:13}.

Therefore, by \eqref{eq:Notes:32} and \eqref{eq:Notes:13} we have
\begin{align}
\frac{dy}{dt}
\leq \bar C  \rho_\alpha \exp\left( - \frac{\alpha^{2} t}{8} \right) y
	\left(\bar C  + \alpha^{2} + z \log y \right). \label{eq:ODE:before:tau:R}
\end{align}	
Fix any $R\geq 1$ and define the stopping time
\begin{align}
 \tau_R = \inf \left\{t\geq 0 \colon \rho_\alpha(t) \geq R\right\}.
 \label{eq:KRAFTWERK}
\end{align}
From \eqref{eq:ODE:before:tau:R} we obtain the bound
\begin{align}
\frac{dy}{dt} \leq
\bar C  R \exp\left( - \frac{\alpha^{2} t}{8} \right) y
	\left(\bar C  + \alpha^{2} + z \log y \right)
\label{eq:THE:ODE}
\end{align}
for all $t \in [0,\tau_R \wedge \sigma]$. We now may apply Lemma~\ref{lemma:ODE:log}, which is a suitable version of the logarithmic Gr\"onwall inequality. Lemma~\ref{lemma:ODE:log} guarantees the existence of a positive deterministic function $\kappa(R,\alpha)$ with the properties
\begin{align*}
\kappa(R,\alpha) \leq \frac{\alpha^2}{8 \bar C}&, \quad \mbox{for every } R \geq 1\\
\lim_{R\to \infty} \kappa(R,\alpha) = 0&, \quad \mbox{for every fixed } \alpha \neq 0\\
\lim_{\alpha^2 \to \infty} \kappa(R,\alpha)  = \infty&, \quad \mbox{for every fixed } R \geq 1\\
\lim_{\alpha^2 \to 0} \kappa(R,\alpha)  = 0&, \quad \mbox{for every fixed } R \geq 1
\end{align*}
 such that if the initial data satifies
\begin{align} 
\| u_0\|_{W^{m,p}} = y(0) \leq \kappa(R,\alpha) \label{eq:THE:IC:CONDITION}
\end{align}
then a smooth solution of \eqref{eq:THE:ODE} satisfies
\begin{align}
 y(t) \leq \frac{\alpha^2}{8 R \bar C} \label{eq:THE:CLOSURE:BOUND}
\end{align}
for all $t\in [0,\tau_R\wedge \sigma]$. For clarity of the presentation we postpone the precise formula for the function $\kappa(R,\alpha)$ and the proof that \eqref{eq:THE:IC:CONDITION} implies \eqref{eq:THE:CLOSURE:BOUND} to Appendix~\ref{sec:ODE} below.

Note that the condition \eqref{eq:THE:IC:CONDITION} imposed on the initial data automatically implies 
\eqref{eq:IC:COND:1}, and hence $\sigma >0$.
Recalling the definition of $y(t)$ and $\rho_\alpha(t)$ in \eqref{eq:yDef3dCase} and \eqref{eq:GBM3Dcase}
 we obtain from \eqref{eq:THE:CLOSURE:BOUND} that for every $t$ in the interval $[0,\sigma\wedge \tau_R]$
\begin{align}
\| u(t)\|_{W^{m,p}} = \gamma^{-1}(t) \Vert v(t) \Vert_{W^{m,p}} = \exp\left( - \frac{\alpha^2t}{8} \right) \rho_\alpha(t) y(t) \leq R \frac{\alpha^{2}}{8 R \bar C } = \frac{\alpha^{2}}{8 \bar C }.
\label{eq:THE:CLOSURE:BOUND:a}
\end{align}
Hence, due to the definition of $\sigma$ (cf.~\eqref{eq:Notes:28}),
the bound \eqref{eq:THE:CLOSURE:BOUND:a} shows that $\sigma \wedge \tau_{R} = \tau_R$. Therefore
\begin{align*}
  \sup_{t \in [0, \tau_{R}]} \| u(t) \|_{W^{1,\infty}} \leq C_{3} \sup_{t \in [0,\tau_{R}]} \Vert u(t) \Vert_{W^{m,p}} \leq \frac{\alpha^{2}}{8},
\end{align*}
which implies that $\xi \geq \tau_{R}$. Therefore, the maximal pathwise solution $(u,\xi)$ of \eqref{eq:ELinM} is global in time on the set $\{\tau_{R} = \infty\}$, 
i.e. on the set where $\rho_{\alpha}(t)$ always stays below $R$ (cf.~\eqref{eq:KRAFTWERK}). We now claim that
\begin{align}
\Prb ( \tau_{R} = \infty ) \geq 1-  \frac{1}{R^{1/4}} \label{eq:tau:R:THE:BOUND}
\end{align}
holds, for any $R>1$.  Note carefully that this lower bound in \eqref{eq:tau:R:THE:BOUND} is independent of $\alpha$. 
Thus if we wish to obtain that the local pathwise solution is global in time with high probability, i.e.
\begin{align*}
  \Prb( \xi = \infty) = 1 - \epsilon,
\end{align*}
for some $\epsilon \in (0,1)$, it is sufficient to choose $R$ so that
\begin{align}
\frac{1}{\epsilon^{4}} \leq R
\label{eq:smallLargeRHypoForEps}
\end{align}
and for this fixed $R$, consider an initial data $u_{0}$ which satisfies $\| u_{0}\|_{W^{m,p}} \leq \kappa(R,\alpha)$.  Alternatively for this $R$ and a \emph{given} (deterministic) 
initial data $\|u_0\|_{W^{m,p}}$ we may choose  $\alpha^2$ sufficiently large so that $\| u_{0}\|_{W^{m,p}} \leq \kappa(R,\alpha)$ to guarantee that the associated $(u,\xi)$
is global with probability $1-\epsilon$.
The proof of Theorem~\ref{thm:GlobalExistence2D3DLinearMultiplicative}, (ii), is now complete, modulo a proof of \eqref{eq:tau:R:THE:BOUND}, which we give next.

In order to estimate $\Prb(\tau_{R} = \infty)$, letting $\mu = \frac{3 \alpha^{2}}{8}$ we observe that 
\begin{align*}
\rho_\alpha(t) = \exp \left( \left(\mu - \frac{\alpha^{2}}{2}\right)t + \alpha W_{t} \right)
\end{align*} is a geometric Brownian motion, the solution of
\begin{align}
d x = \mu x dt + \alpha x  dW, \quad x(0) = 1,
\label{eq:GBM}
\end{align}
where $W$ is a standard $1-D$ Brownian motion. The following lemma, with $\mu = \frac{3 \alpha^{2}}{8}$, proves estimate \eqref{eq:tau:R:THE:BOUND}, and by the above discussion it concludes the proof of Theorem~\ref{thm:GlobalExistence2D3DLinearMultiplicative}.

\begin{lemma}[\bf Estimates for the exit times of geometric Brownian motion] \label{thm:GBMprops}Suppose that $\mu < \frac{\alpha^{2}}{2}$ and $x_{0} > 0$ and is {\em deterministic}. Let $x(t)$ be the solution of \eqref{eq:GBM} and
 for $R > 1$ define $\tau_{R}$ as
\begin{align}
	\tau_{R} = \inf \left\{t \geq 0 \colon x(t) > R \right\}.
	\label{eq:GrowthQuantGBM}
\end{align}
Then we have
\begin{align}
  \Prb( \tau_{R} = \infty) \geq 1- \left( \frac{1}{R} \right)^{1 - \frac{2 \mu}{\alpha^{2}}}
  \label{eq:ProbOfNeverHittingRGBMdisp}.
\end{align}
\end{lemma}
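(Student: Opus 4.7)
\textbf{Proof proposal for Lemma~\ref{thm:GBMprops}.} The plan is to find an exponent $p>0$ that makes $x(t)^{p}$ a positive local martingale, and then to apply optional stopping at the stopping time $\tau_{R}$. For the case $x_{0}=1$ needed in the application (the general deterministic $x_{0}>0$ is handled identically modulo a rescaling), the SDE \eqref{eq:GBM} has the explicit solution
\begin{align*}
x(t)=\exp\left(\left(\mu-\frac{\alpha^{2}}{2}\right)t + \alpha W_{t}\right),
\end{align*}
so $\log x(t)$ is a Brownian motion with negative drift $\nu=\mu-\alpha^{2}/2<0$ and variance parameter $\alpha^{2}$; the lemma is thus a restatement of the classical overshoot estimate for such processes, which can be proved cleanly by the martingale method.

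First I would apply It\^{o}'s formula to $M_{t}:=x(t)^{p}$ for a parameter $p$ to be chosen. A short computation gives
\begin{align*}
dM_{t} = p\left(\mu+\frac{(p-1)\alpha^{2}}{2}\right)M_{t}\,dt + p\alpha M_{t}\,dW_{t}.
\end{align*}
The drift vanishes precisely when $p=1-2\mu/\alpha^{2}$, which is strictly positive thanks to the standing hypothesis $\mu<\alpha^{2}/2$. With this choice, $M_{t}$ is a strictly positive local martingale starting from $M_{0}=1$, hence a supermartingale (by Fatou's lemma applied to any localizing sequence).

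Next I would apply the optional stopping theorem at the bounded stopping time $\tau_{R}\wedge t$. The supermartingale property yields
\begin{align*}
\E\, M_{\tau_{R}\wedge t} \leq M_{0}=1.
\end{align*}
On the event $\{\tau_{R}\leq t\}$, continuity of the paths of $x$ together with the definition \eqref{eq:GrowthQuantGBM} of $\tau_{R}$ gives $x(\tau_{R})=R$, so $M_{\tau_{R}\wedge t}=R^{p}$ there, and $M_{\tau_{R}\wedge t}\geq 0$ in general. Therefore
\begin{align*}
R^{p}\,\Prb(\tau_{R}\leq t) \leq \E\left(M_{\tau_{R}\wedge t}\,\indFn{\tau_{R}\leq t}\right) \leq 1.
\end{align*}
Sending $t\to\infty$ by monotone convergence and substituting $p=1-2\mu/\alpha^{2}$ gives $\Prb(\tau_{R}<\infty)\leq R^{2\mu/\alpha^{2}-1}=(1/R)^{1-2\mu/\alpha^{2}}$, which is exactly \eqref{eq:ProbOfNeverHittingRGBMdisp}.

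There is no serious obstacle in this argument: the only delicate point is justifying the supermartingale inequality at the (a priori possibly infinite) stopping time $\tau_{R}$, but using the truncation $\tau_{R}\wedge t$ followed by the monotone convergence theorem circumvents any integrability issue (all quantities involved are nonnegative). The crucial structural fact being exploited is simply that the condition $\mu<\alpha^{2}/2$ forces the drift of $\log x$ to be negative, which in turn is what makes a positive power of $x$ a genuine supermartingale and renders the event $\{\tau_{R}=\infty\}$ of positive probability.
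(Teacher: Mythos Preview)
Your proof is correct and follows essentially the same approach as the paper: apply It\^o's formula to $x^{p}$, choose $p=1-2\mu/\alpha^{2}$ so the drift vanishes, then use optional stopping at $t\wedge\tau_{R}$ together with $x(\tau_{R})=R$ to bound $\Prb(\tau_{R}\leq t)$ and pass $t\to\infty$. The only cosmetic difference is that the paper notes $x$ is bounded by $R$ on $[0,\tau_{R}]$ and therefore obtains the \emph{equality} $\E\,x(t\wedge\tau_{R})^{p}=1$ directly (the stochastic integral is a genuine martingale there), whereas you invoke the positive-local-martingale-hence-supermartingale route to get the inequality $\leq 1$; either is sufficient.
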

\begin{proof}[Proof of Lemma~\ref{thm:GBMprops}]
For $\lambda>0$ we apply the Ito formula for $f(x) = x^{\lambda}$ 
and obtain that
$$
	d x^{\lambda} = \lambda x^{\lambda-1} dx	+ \frac{\lambda (\lambda-1)}{2} x^{\lambda-2} dx dx
	= \left( \mu \lambda + \frac{\alpha^{2} \lambda (\lambda-1)}{2} \right) x^{\lambda} dt +
	\alpha \lambda x^{\lambda}dW.
$$
Integrating up to any time $t \wedge \tau_{R}$ and taking an expected value we find that
$$
 \E x^{\lambda}(t \wedge \tau_{R}) =  1
 	+ \E \int_{0}^{t \wedge \tau_{R}}  \left( \mu \lambda + \frac{\alpha^{2} \lambda (\lambda-1)}{2} \right)
		x^{\lambda} ds.
$$
Taking $\lambda = \lambda_{c} = 1 - \frac{2 \mu}{\alpha^{2}}$ in the above expression we find that
$$
 \E x^{\lambda_{c}}(t \wedge \tau_{R}) = 1.
$$
Now, using that $\tau_R$ is increasing in $R$ and the continuity of measures we get
\begin{align*}
   \Prb( \tau_{R} = \infty) = \Prb\left( \bigcap_{n} \{\tau_{R} > n\} \right)
   &= \lim_{N \rightarrow \infty} \Prb( \tau_{R} > N) 
   =  \lim_{N \rightarrow \infty} \Prb( x^{\lambda_{c}}(N \wedge \tau_{R}) < R^{\lambda_{c}} )\\
   &\geq  \lim_{N \rightarrow \infty} \left(1 -  \frac{\E x^{\lambda_{c}}(N \wedge \tau_{R}) }{ R^{\lambda_{c}}}\right) = 1 - \frac{ 1 }{ R^{\lambda_{c}}}
\end{align*}
which concludes the proof of the lemma.
\end{proof}

\appendix

\section{The smoothing operator and associated properties}
\setcounter{equation}{0}
\label{sec:SmoothingOperator}

In this appendix we define and review some basic properties of a class of
smoothing operators  $F_{\epsilon}$ as used in \cite{KatoLai1984}.
These mollifiers are used to construct solutions in $W^{m,p}$ in Section~\ref{sec:WmpSolutionsCons} above.   

For every $\epsilon > 0$, let  $\tilde{F}_{\epsilon}$ be a standard mollifier on $\RR^{d}$, 
for instance consider $\tilde{F}_{\epsilon}$ to be the convolution against the inverse Fourier transform of $\exp(-\epsilon |\xi|^2)$.  
Assuming $\partial \DD$ is sufficiently smooth, there exists (see for instance~\cite[Chapter 5]{AdamsFournier}) 
a linear extension operator $E$ from $\DD$ to $\RR^d$,  i.e.
$E u(x) = u(x)$ a.e. in $\DD$, and $\| E u \|_{W^{m,p}(\RR^d)} \leq C \| u \|_{W^{m,p}(\DD)}$ for  $m\geq 0$, and all $2\leq p < \infty$.
We also take $R$ to be a restriction operator, which is bounded from
$W^{m,p}(\RR^{d})$ into $W^{m,p}(\DD)$ for $m\geq 0$
and all $p \geq 2$.  Lastly, we let $P$ be the Leray projection operator
as defined in Section~\ref{sec:prelim}.  We finally define the smoothing operators $F_\epsilon$ by
\begin{align}
  F_{\epsilon} = P\; R\; \tilde{F}_{\epsilon}\; E   \label{eq:MollDef}
\end{align}
for every $\epsilon > 0$.   We have the following basic properties
for $F_{\epsilon}$.
\begin{lemma}[\bf Properties of the smoothing operator]\label{thm:MolProps}
Suppose that $m \geq 0$, and $p \geq 2$. For every $\epsilon > 0$ the operator $F_{\epsilon}$ maps $X_{m,p}$ into $X_{m'}$, where $m'=m+5$.  Moreover the following properties hold:
\begin{itemize}
\item[(i)]  The collection $F_{\epsilon}$ is uniformly bounded on $X_{m,p}$ independently of $\epsilon$
\begin{align}
   \| F_{\epsilon} u \|_{W^{m,p}} \leq C\|u \|_{W^{m,p}}, \quad u \in X_{m,p}
   \label{eq:MollUniformBnd}
\end{align}
where $C = C(m, p, \DD)$ is a universal constant independent of $\epsilon > 0$.
\item[(ii)] For every $\epsilon > 0$, when $m\geq 1$ we have
\begin{align}
   \| F_{\epsilon} u \|_{W^{m,p}} \leq \frac{C}{\epsilon} \|u \|_{W^{m-1,p}}, \quad u \in X_{m,p}
   \label{eq:MollDivEpsScaling}
\end{align}
and
\begin{align} 
 \| F_{\epsilon} u - u \|_{W^{m-1,p}} \leq C  \epsilon \|u \|_{W^{m,p}}, \quad u \in X_{m,p}
\end{align}
where $C =  C(m, p, \DD)$ is a universal constant independent of $\epsilon > 0$.
\item[(iii)] The sequence of mollifications $F_{\epsilon}u$
converge to $u$, for every $u$ in $X_{m,p}$, that is
\begin{align}
  \lim_{\epsilon \rightarrow 0} \| F_{\epsilon} u - u \|_{W^{m,p}}  = 0
 \label{eq:MollConvEspWmp}
\end{align}
and when $m \geq 1$ we also have
\begin{align}
  \lim_{\epsilon \rightarrow 0} \frac{1}{\epsilon} \| F_{\epsilon} u - u \|_{W^{m-1,p}}  = 0.
   \label{eq:MollConvEspWmmin1pGain}
\end{align}
\item[(iv)] The convergence of $F_\epsilon u$ to $u$ is uniform over compact subsets of $X_{m,p}$. In particular if $\{ u^k\}_{k\geq 1}$ is a sequence of functions in $X_{m,p}$ which converge in $X_{m,p}$, then we have
\begin{align} 
 \lim_{\epsilon \rightarrow 0} \sup_{k \geq 1} \| F_{\epsilon} u^k - u^k \|_{W^{m,p}}  = 0
\end{align}
and
\begin{align}
   \lim_{\epsilon \rightarrow 0}  \sup_{k \geq 1}  \frac{1}{\epsilon} \| F_{\epsilon} u^k - u^k \|_{W^{m-1,p}}  = 0,
   \label{eq:MollConvEspWmmin1pGain:Uniform}
\end{align}
when $m\geq 1$.
\end{itemize}
\end{lemma}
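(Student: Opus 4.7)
The strategy is to reduce every assertion to the corresponding classical property of the Gaussian mollifier $\tilde F_{\epsilon}$ on $\RR^{d}$, then transport that property back to $\DD$ using the boundedness of the extension operator $E\colon W^{m,p}(\DD)\to W^{m,p}(\RR^{d})$, the restriction $R$, and the Leray projector $P\colon W^{m,p}(\DD)\to X_{m,p}$ (recall \eqref{eq:P:bounded}). That $F_{\epsilon}$ maps into $X_{m'}$ is immediate: since $\tilde F_{\epsilon}$ is convolution with a Schwartz function, $\tilde F_{\epsilon} Eu\in C^{\infty}(\RR^{d})\cap W^{s,p}(\RR^{d})$ for every $s\geq 0$, the restriction lies in $W^{m',p}(\DD)$, and $P$ sends this into $X_{m',p}\subset X_{m'}$ (using $p\geq 2$ and Sobolev embedding).

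For part (i), observe that $\tilde F_{\epsilon}$ is convolution with an $L^{1}$-normalized Gaussian, so Young's inequality gives $\|\tilde F_{\epsilon}\|_{L^{p}\to L^{p}}\leq 1$ uniformly in $\epsilon$; since convolution commutes with $\partial^{\alpha}$, this lifts to a uniform $W^{m,p}(\RR^{d})\to W^{m,p}(\RR^{d})$ bound. Composing with the bounds on $E,R,P$ yields \eqref{eq:MollUniformBnd}. For part (ii), the first inequality \eqref{eq:MollDivEpsScaling} follows from the standard kernel estimate $\|\partial_{i}\tilde F_{\epsilon}\|_{L^{1}(\RR^{d})}\leq C/\epsilon$ applied after writing one derivative of $\tilde F_{\epsilon}Ev$ as convolution of $Ev$ with a derivative of the Gaussian; the second inequality comes from writing
\[
\tilde F_{\epsilon}g - g = \int\bigl(g(\cdot-y) - g(\cdot)\bigr)\eta_{\epsilon}(y)\,dy
\]
with $\eta_{\epsilon}$ the Gaussian kernel, expanding $g(\cdot-y) - g(\cdot) = -\int_{0}^{1}y\cdot\nabla g(\cdot-ty)\,dt$, and using the first-moment bound $\int|y|\eta_{\epsilon}(y)\,dy\leq C\epsilon$ together with translation invariance of $\|\cdot\|_{L^{p}}$.

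Part (iii) is by a standard density-plus-uniform-boundedness argument. For \eqref{eq:MollConvEspWmp}, given $u\in X_{m,p}$ and $\eta>0$, pick $u^{\delta}\in C^{\infty}\cap X_{m,p}$ with $\|u-u^{\delta}\|_{W^{m,p}}<\eta$; split
\[
F_{\epsilon}u - u = F_{\epsilon}(u-u^{\delta}) + (F_{\epsilon}u^{\delta} - u^{\delta}) + (u^{\delta}-u),
\]
and use (i) on the first term together with the pointwise convergence $\tilde F_{\epsilon}u^{\delta}\to u^{\delta}$ in $W^{m,p}$ (which is elementary for smooth compactly approximable functions). The sharpened convergence \eqref{eq:MollConvEspWmmin1pGain} is obtained by the same splitting but applying the second estimate of (ii) to the middle term and the first estimate of (ii) to the difference terms, so that the factor $1/\epsilon$ is absorbed into the $W^{m-1,p}$ loss. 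Part (iv) is then a routine compactness argument: a compact $K\subset X_{m,p}$ admits a finite $\eta$-net $\{u_{j}\}_{j=1}^{N}$, and the uniform estimate (i) together with \eqref{eq:MollConvEspWmp} at each $u_{j}$ upgrades pointwise convergence to uniform convergence on $K$; the corresponding argument with the sharper bound yields \eqref{eq:MollConvEspWmmin1pGain:Uniform}.

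The only non-cosmetic obstacle is checking that the Leray projector, being a nonlocal operator, respects the convergence rates in (ii) and (iii). This is exactly the content of \eqref{eq:P:bounded}, which is the one ingredient specific to the bounded-domain setting; everything else is a direct transcription of the corresponding $\RR^{d}$ estimates.
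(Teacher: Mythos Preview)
Your approach is the same as the paper's: the authors' proof of Lemma~\ref{thm:MolProps} is essentially a one-sentence reduction, stating that the properties hold because they hold for the standard mollifier $\tilde F_\epsilon$ on $\RR^d$, together with the boundedness of $E$, $R$, $P$ and the identity $RE = \mathrm{Id}_\DD$ a.e., with a reference to \cite{AdamsFournier,KatoLai1984} for details. You have simply fleshed out that reduction, and your treatment of (i), (ii), and (iv) is correct.

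One small point worth tightening: in your density argument for \eqref{eq:MollConvEspWmmin1pGain}, applying the second estimate of (ii) to the middle term $\frac{1}{\epsilon}\|F_\epsilon u^\delta - u^\delta\|_{W^{m-1,p}}$ only gives a bound $C\|u^\delta\|_{W^{m,p}}$, not convergence to zero. What makes this term vanish is that for $u^\delta$ chosen more regular (say in $X_{m+1,p}$, which is dense in $X_{m,p}$), the even symmetry of the Gaussian kernel kills the first-order Taylor term, yielding the improved rate $\|F_\epsilon u^\delta - u^\delta\|_{W^{m-1,p}} \leq C\epsilon^2\|u^\delta\|_{W^{m+1,p}}$; the difference terms are then handled exactly as you say, via the second estimate of (ii), which absorbs the $1/\epsilon$. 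You should also make explicit the identities $RE = \mathrm{Id}_\DD$ and $Pu = u$ for $u \in X_{m,p}$, since these are what allow you to write $F_\epsilon u - u = PR(\tilde F_\epsilon Eu - Eu)$ and thereby transport the $\RR^d$ convergence back to $\DD$; the paper singles out the former identity as the key structural ingredient.
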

The above properties hold for $F_{\epsilon}$, since  they hold for the standard mollifier $\tilde{F}_\epsilon$ on $\RR^d$, 
we have that $R$ and $E$ are bounded maps between the relevant Sobolev spaces, and $RE = {\rm Id}_{\DD}$ a.e.. 
For further details, see for instance~\cite{AdamsFournier,KatoLai1984}.

\section{A technical lemma about ODEs}
\label{sec:ODE}
\setcounter{equation}{0}

In this appendix we give the proof of a technical lemma which was used in proving the $3D$ case of Theorem~\ref{thm:GlobalExistence2D3DLinearMultiplicative}, in Section~\ref{sec:GlobalExistence:3D} above. The {\em reason d'\^{e}tre} of the below lemma is to very carefully keep track of the dependence on $\alpha$ for all constants involved. This enables us to control the quantities involved as the parameter $\alpha$ is sent to either $0$ or $\infty$. 
\begin{lemma}
\label{lemma:ODE:log}
Let $\bar C\geq 1$ be a universal constant. Fix the parameters $R \geq 1,\alpha \neq 0$ and  $T>0$.
For $y_{0} > 0$, let $y(t)$ be a positive smooth function satisfying
\begin{align}
& \frac{dy}{dt}(t)  \leq  \bar{C} R \exp\left( - \frac{\alpha^{2} t}{8} \right) y(t)
	\left(\bar C + \alpha^{2} + z(t) \log y(t)  \right)  \label{eq:app:C:1}\\
&y(0) = y_{0}\label{eq:app:C:2}
\end{align}
where $z(t)$ is a {\em given} continuous function such that $0 < z(t) \leq \alpha^{2}/4$ for all $t\in [0,T]$.
There exits a positive function $K(R ,\alpha) \geq 2$ such that if
\begin{align}
y_0 \leq \frac{\alpha^{2}}{4 \bar C K(R ,\alpha)} \label{eq:app:C:IC}
\end{align}
then we have
\begin{align}
y(t) \leq \frac{K(R ,\alpha)}{2 R} y_{0} \leq \frac{\alpha^{2}}{8R \bar C} \label{eq:app:C:thebound}
\end{align}
for all $t \in [0,T)$.  This function $K(R,\alpha)$ may be chosen explicitly as
\begin{align}
K (R , \alpha)  =  
2 R
\left(  1 + \left(\frac{\alpha^2}{8 \bar C} \right)^{(1- \frac{1}{8(D_R -1)})}\right)
\exp \left( \frac{ 8 \bar C R D_R (\bar C + \alpha^{2})}{\alpha^{2}}\right)
\label{eq:app:C:K:def}
\end{align}
where we have denoted $D_R = \exp(4 \bar C R)$. Additionally, for every {\em fixed} $R \geq 1$ we obtain the asymptotic behavior for the function $$\kappa(R,\alpha) = \frac{\alpha^2}{2 \bar C K(R,\alpha)}$$ to be 
\begin{align}
&\lim_{\alpha^2 \to \infty} \kappa(R,\alpha) = \lim_{\alpha^2 \to \infty} \frac{\alpha^{2}}{K( R, \alpha)} = \infty, \label{eq:app:C:asymptotics} \\
&\lim_{\alpha^2 \to 0} \kappa(R,\alpha) = \lim_{\alpha^2 \to 0} \frac{\alpha^{2}}{K( R, \alpha)} = 0.
\end{align}
\end{lemma}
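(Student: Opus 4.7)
The plan is to linearize \eqref{eq:app:C:1} via the substitution $L(t) = \log y(t)$, solve the resulting linear differential inequality by an integrating factor, and then tune $K(R,\alpha)$ so that both \eqref{eq:app:C:thebound} and the asymptotic properties hold.

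First, dividing \eqref{eq:app:C:1} through by $y(t) > 0$ yields the linear inequality
\[
L'(t) \leq A(t)(\bar C + \alpha^2) + A(t) z(t) L(t), \qquad A(t) := \bar C R e^{-\alpha^2 t/8},
\]
with $L(0) = \log y_0$. Multiplying by the integrating factor $\mu(t) := \exp(-\int_0^t A(s) z(s)\, ds)$ and integrating produces
\[
L(t) \leq \mu(t)^{-1} L(0) + \mu(t)^{-1} \int_0^t \mu(s) A(s) (\bar C + \alpha^2) \, ds.
\]
The pointwise bound $z(s) \leq \alpha^2/4$ together with the explicit primitive $\int_0^t A(s)\, ds \leq 8 \bar C R /\alpha^2$ will give $\mu(t)^{-1} \leq e^{2\bar C R} \leq D_R$ and bound the drift integral by $C_1 := 8 \bar C R(\bar C + \alpha^2)/\alpha^2$. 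Exponentiating yields the central estimate
\[
y(t) \leq y_0^{D_R} \exp(D_R C_1),
\]
at least when $L(0) + C_1 \geq 0$; the complementary case $L(0) + C_1 < 0$ gives the strictly better bound $y(t) \leq y_0 \exp(C_1)$ from the same integrating factor argument, and will be handled automatically by the same choice of $K$.

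Second, $K(R,\alpha)$ will be tuned so that the smallness hypothesis \eqref{eq:app:C:IC} forces \eqref{eq:app:C:thebound}. Writing $y_0^{D_R} = y_0 \cdot y_0^{D_R - 1}$ and using the consequence $y_0 \leq \alpha^2/(8 \bar C)$ of \eqref{eq:app:C:IC} (valid because $K \geq 2R \geq 2$), one obtains $y(t)/y_0 \leq (\alpha^2/(8 \bar C))^{D_R - 1} \exp(D_R C_1)$. The fractional exponent $1 - 1/(8(D_R - 1))$ appearing in \eqref{eq:app:C:K:def} will be introduced by reserving a small fraction of the $D_R - 1$ power of $y_0$ as a safety margin; the remaining terms then absorb cleanly into the factor $2R$ and the exponential, producing the explicit formula \eqref{eq:app:C:K:def}. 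The asymptotic assertions for $\kappa(R,\alpha) = \alpha^2/(2 \bar C K(R,\alpha))$ then follow by direct inspection of \eqref{eq:app:C:K:def}: as $\alpha^2 \to \infty$ with $R$ fixed, $C_1 \to 8\bar C R$ so the exponential factor in $K$ stabilizes while the polynomial factor grows only like $(\alpha^2)^{1 - 1/(8(D_R - 1))}$, yielding $\kappa \to \infty$; as $\alpha^2 \to 0$, the dominant term $8 \bar C^2 R/\alpha^2$ in $C_1$ forces $K$ to blow up exponentially fast, so $\kappa \to 0$.

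The main technical obstacle will be the precise balance in the choice of $K(R,\alpha)$: the super-exponential factor $D_R = e^{4\bar C R}$ makes $y_0^{D_R}$ decay very rapidly for small $y_0$ but also makes $\exp(D_R C_1)$ blow up very rapidly as $\alpha^2 \to 0$, and these two effects must be reconciled against the required linear bound $y(t) \leq (K/2R) y_0$ in a way that yields the correct asymptotic behavior of $\kappa$. A secondary subtlety is the $L(0) + C_1 \gtrless 0$ dichotomy, which must be verified separately to obtain the uniform bound on all of $[0,T)$.
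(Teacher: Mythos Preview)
Your proposal is correct and follows essentially the same route as the paper: logarithmic substitution $L=\log y$, linear Gr\"onwall via the integrating factor using $z\leq\alpha^2/4$ and $\int_0^\infty A\leq 8\bar C R/\alpha^2$, exponentiation, a dichotomy on the size of $y_0$ (the paper splits at $y_0=1$ rather than at $L(0)+C_1=0$), the factorization $y_0^{D_R}=y_0\cdot y_0^{D_R-1}$ combined with the smallness hypothesis, and the same asymptotic check of $K$. The only slip is that in the small-$y_0$ branch the integrating factor argument yields $y(t)\leq y_0\exp(D_R C_1)$ rather than $y_0\exp(C_1)$, but as you correctly observe this case is absorbed by the same choice of $K$.
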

\begin{proof}[Proof of Lemma~\ref{lemma:ODE:log}]
For ease of notation, let $a(t) = \bar C R \exp(-\alpha^{2}t/8)$.
After letting $Y(t) = \log y(t)$, the inequality \eqref{eq:app:C:1} reads
\begin{align}
\frac{dY(t)}{dt} \leq a(t) \left( (\bar C + \alpha^{2}) + z(t) Y(t) \right) \label{eq:app:C:Y}
\end{align}
with initial condition $Y(0) = \log y_{0}$. The initial value problem associated to \eqref{eq:app:C:Y} leads to the bound
\begin{align}
Y(t) &\leq Y(0) \exp\left( \int_{0}^{t} a(s) z(s) ds \right) + (\bar C + \alpha^{2}) \int_{0}^{t} a(s) \exp\left( \int_{s}^{t} a(s') z(s') ds'\right) ds \notag\\
&\leq Y(0) \exp\left( \int_{0}^{t} a(s) z(s) ds \right) + (\bar C + \alpha^{2}) \exp(2 \bar C R) \int_{0}^{t} a(s) ds \notag\\
&\leq Y(0) \exp\left( \int_{0}^{t} a(s) z(s) ds \right) + \frac{ 8 \bar C R  (\bar C + \alpha^{2}) \exp(2 \bar C R)}{\alpha^{2}}  \label{eq:app:C:Ybound}
\end{align}
where we used the a priori bound $z \leq \alpha^{2}/4$ and the identity $\int_{0}^{\infty} a(t) dt = 8 \bar C R / \alpha^{2}$. 
By exponentiation it follows that
\begin{align}
y(t)
	\leq  y_{0}^{\exp\left( \int_{0}^{t} a(s) z(s) ds \right) }
		\exp \left( \frac{ 8 \bar C R (\bar C + \alpha^{2}) \exp(2 \bar C R)}{\alpha^{2}} \right) \label{eq:app:C:y:bound1}.
\end{align}
We note that if $y_{0} \leq 1$, since $\exp\left( \int_{0}^{t} a(s) z(s) ds \right) \geq 1$, we have
\begin{align}
y_{0}^{\exp\left( \int_{0}^{t} a(s) z(s) ds \right) } \leq y_{0}.\label{eq:app:C:y:bound2}
\end{align}
On the other hand, if $y_{0} > 1$, due to \eqref{eq:app:C:IC} we may bound
\begin{align}
y_{0} \leq \frac{\alpha^{2}}{M}, \label{eq:app:C:IC:bound}
\end{align}
whenever $M \leq 4 \bar C K$. Hence, recalling the a priori bound on $z(t)$ and integrating $a(t)$ from $0$ to $\infty$,
we obtain from \eqref{eq:app:C:y:bound1} and \eqref{eq:app:C:IC:bound} that
\begin{align}
y_{0}^{\exp\left( \int_{0}^{t} a(s) z(s) ds \right) }
\leq y_{0}^{D_{R}}
\leq y_{0} y_{0}^{D_{R}-1}
\leq y_{0} \left( \frac{\alpha^{2}}{M}\right)^{D_{R}-1} \label{eq:app:C:y:bound3}
\end{align}
for $y_{0} > 1$, since $ D_{R}  = \exp(2 \bar C R) \geq 3$.
Hence, we obtain from \eqref{eq:app:C:y:bound1}, \eqref{eq:app:C:y:bound2}, and \eqref{eq:app:C:y:bound3} that
\begin{align}
y(t)
\leq
	y_{0} \frac{1}{2R}
			\left(
				2 R
				\left(1 +\left( \frac{\alpha^{2}}{M}\right)^{D_{R} -1} \right)
				\exp \left( \frac{ 8 \bar C C_{*} D_{R} (\bar C + \alpha^{2})}{\alpha^{2}}\right)
			\right)
=:  y_{0} \frac{1}{2R} \bar K(M). \label{eq:app:K:bar:def}
\end{align}
The proof of \eqref{eq:app:C:thebound} is completed if we show that $\bar K(M) \leq K$ for all $\alpha >0$, for some $M$ is chosen such that $M\leq 4 \bar C K$. We now let
\begin{align}
M=  8 \bar C  \indFn{\alpha^{2} \leq 8 \bar C} + \indFn{\alpha^{2} > 8 \bar C} (8 \bar C)^{\frac{1}{2(D_{R}-1)}} \alpha^{(2 - \frac{1}{D_{R}-1})}
\label{eq:app:C:M:def}
\end{align}
and 
define
\begin{align*}
K (R , \alpha)  = 
2 R
\left(  1 + \left(\frac{\alpha^2}{8 \bar C} \right)^{(1- \frac{1}{8(D_R -1)})}\right)
\exp \left( \frac{ 8 \bar C R D_R (\bar C + \alpha^{2})}{\alpha^{2}}\right).
\end{align*}
Indeed, it is not hard to verify that for $R \geq 1$, and $\bar C \geq 1$,
we have $4 \bar C K \geq M$ for all $\alpha>0$. Lastly, to verify that the above defined $K$ indeed is larger than $\bar K(M)$ (which was defined in \eqref{eq:app:K:bar:def}), it is sufficient to check that
\begin{align}
\left( \frac{\alpha^{2}}{M}\right)^{D_{R} -1} \leq  \left(\frac{\alpha^2}{8 \bar C} \right)^{(1- \frac{1}{8(D_{R}-1)})}
\label{eq:app:C:tocheck}
\end{align}
for all $\alpha >0$. Indeed, \eqref{eq:app:C:tocheck} may be checked by a direct computation using \eqref{eq:app:C:M:def} and $D_{R} \geq 3$.

Lastly, one may directly check that for any fixed $R\geq 1$, as $\alpha \rightarrow \infty$ we have $K(R,\alpha) = {O}(\alpha^{2 - \frac{1}{4(D_{R}-1)}})$,
and therefore $\alpha^{2} / K(R,\alpha) \rightarrow \infty$,  as $\alpha \to \infty$, which concludes the proof of \eqref{eq:app:C:asymptotics}. To conclude, it is clear from the definition of $K(R\alpha)$ that it is larger than $2$, and hence $\alpha^{2}/K(R,\alpha) \to 0$ as $\alpha \to 0$, which concludes the proof of the lemma.
\end{proof}

\section{A non-blowup condition for SDEs with linear-logarithmic growth in the drift}
\label{sec:Gronwal:SODE}
\setcounter{equation}{0}

In this section we state and prove a condition for the non-blow up
of solutions to SODEs via a Logarithmic Gr\"onwall type argument.
See e.g. \cite{FangZhang2005} for related results.
\begin{lemma}\label{thm:NonBlowupLogLipp}
Fix a stochastic basis  $\mathcal{S} := (\Omega, \mathcal{F}, \Prb,$ $
\{\mathcal{F}_t\}_{t \geq 0}, \WW)$.  Suppose that on $\mathcal{S}$
we have defined $Y$ a real valued, predictable process defined up to a blow up time $\xi > 0$,
i.e. for all bounded stopping times $\tau < \xi$,
$
\sup_{t \in [0,\tau]} Y < \infty
$
a.s. and
$$
\sup_{t \in [0, \xi)} Y = \infty
\quad \textrm{ on the set } \{\xi < \infty\}.
$$
Assume
that $Y \geq 1$ and that on $[0, \xi)$, $Y$ satisfies the It\={o} stochastic differential
\begin{align}
	dY = Xdt + Z \dW, \quad Y(0) = Y_{0},
\end{align}
where on $[0, \xi)$, $X$, $Z$ are respectively real valued and
$L_{2}$ valued predictable processes and $Y_{0}$ is $\mathcal{F}_{0}$
and bounded above by a deterministic constant $M > 0$.\footnote{This 
condition is not essential; we may merely assume
that $Y_{0} < \infty$, almost surely.  See Remark~\ref{rmk:InitialDataCrazyness}
below}
Suppose that there exists a stochastic process 
\begin{align}
\eta \in L^1(\Omega; L^1_{loc} [0,\infty))
\label{eq:ExtraProcessToCauseMentalBreakdown}
\end{align}
with $\eta \geq 1$ for almost every $(\omega, t)$ 
and
an increasing collection of stopping times
$\tau_{R}$ with $\tau_{R} \leq \xi$ and such that
\begin{align}
 \Prb \left( \bigcap_{R > 0} \{\tau_{R} < \xi \wedge T \} \right) =0.
    \label{eq:WeHitXiWithFiniteR}
\end{align}
We further assume that for every fixed $R >0$, 
there exists a deterministic constant $K_{R}$ 
depending only on $R$ (independent of $t$),
and a number $r \in [0, 1/2]$ such that,
\begin{align*}
	|X| \leq& K_{R} ((1 + \log Y) Y + \eta),\\
	\|Z\|_{L_{2}} \leq& K_{R}  Y^{1-r} \eta^{r}
\end{align*}
which holds over $[0,\tau_{R}]$.
Then $\xi = \infty$ and in particular, 
$
	\sup_{t \in [0,T]} Y < \infty, \, \textrm{ a.s. }
$
for every $T > 0$.
\end{lemma}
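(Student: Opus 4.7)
The strategy is to apply It\^o's formula to $\log Y$ in order to replace the quasi-linear-logarithmic drift $(1+\log Y)Y$ by the additive bound $K_R(1 + \log Y)$, at which point the classical (non-logarithmic) Gr\"onwall inequality can close the estimate.

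Computing
\[
  d\log Y = \left( \frac{X}{Y} - \frac{\|Z\|_{L_2}^2}{2 Y^2} \right) dt + \frac{Z}{Y} \dW
\]
and using $Y \geq 1$ together with the hypotheses on $X$ and $Z$, I would bound the drift of $\log Y$ pointwise on $[0, \tau_R]$ by $K_R(1+\log Y) + K_R \eta + (K_R^2/2) \eta^{2r}$, and the diffusion by $\|Z/Y\|_{L_2}^2 \leq K_R^2 \eta^{2r}$. Here the assumption $r \in [0, 1/2]$ enters crucially: combined with $\eta \geq 1$ it gives $\eta^{2r} \leq \eta$, turning every $\eta$-dependent contribution into a process integrable in $(\omega,t)$ by virtue of \eqref{eq:ExtraProcessToCauseMentalBreakdown}.

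Next I would localize by introducing the auxiliary stopping time $\sigma_N := \inf\{ t \geq 0 : Y(t) \geq N\}$, so that on $[0, T \wedge \tau_R \wedge \sigma_N]$ everything is genuinely integrable. Taking a supremum in time, then an expectation, and applying the Burkholder-Davis-Gundy inequality to the martingale term (whose quadratic variation is bounded by $K_R^2 \int_0^T \eta\, ds$), I would obtain an inequality of the form
\[
  \E \sup_{s \in [0, t \wedge \tau_R \wedge \sigma_N]} \log Y(s) \leq A_R + K_R \int_0^t \E \sup_{s \in [0, t' \wedge \tau_R \wedge \sigma_N]} \log Y(s)\, dt',
\]
with $A_R$ finite, depending on $R$, $T$, $M$, and $\E \int_0^T \eta\, ds$, but independent of $N$. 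The standard Gr\"onwall inequality then delivers a bound $C_R(T)$ uniform in $N$, and Chebyshev gives $\Prb(\sigma_N \leq T \wedge \tau_R) \leq C_R(T)/\log N \to 0$ as $N \to \infty$; hence $\sup_{[0, T \wedge \tau_R]} Y < \infty$ almost surely for every fixed $R$ and $T$.

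To conclude, I would split
\[
  \Prb(\xi \leq T) \leq \Prb\Bigl( \bigcap_{R} \{\tau_R < \xi \wedge T\} \Bigr) + \sum_{R = 1}^{\infty} \Prb(\tau_R = \xi \leq T),
\]
where we have restricted to a countable increasing subsequence of $\tau_R$. The first term vanishes by \eqref{eq:WeHitXiWithFiniteR}, while each summand in the second term vanishes because on $\{\tau_R = \xi \leq T\}$ the blow-up clause forces $\sup_{[0, T \wedge \tau_R)} Y = \infty$, contradicting the almost sure finiteness just established. Since $T > 0$ is arbitrary, $\xi = \infty$ a.s. The main obstacle is controlling the It\^o correction $\|Z\|_{L_2}^2/(2Y^2)$ produced by applying $\log$: it is exactly the hypothesis $\|Z\|_{L_2} \leq K_R Y^{1-r} \eta^r$ with $r \leq 1/2$ (together with $Y \geq 1$ and $\eta \geq 1$) that makes this term dominated by the integrable process $\tfrac{1}{2} K_R^2 \eta$. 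The boundedness assumption $Y_0 \leq M$ is convenient but inessential, and may be relaxed to $Y_0 < \infty$ a.s.\ by the cutting argument of \eqref{eq:cutThatBitchUpSol}--\eqref{eq:cutThatBitchUpST}.
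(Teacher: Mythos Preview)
Your argument is correct and is in fact more streamlined than the paper's. The paper does not apply It\^o's formula to $\log Y$ directly; instead it introduces the auxiliary function
\[
\Phi(x)=\exp\!\left(\int_{0}^{x}\frac{dr}{r(1+\ln r)+1}\right),
\]
applies It\^o's formula to $\Phi(Y)$, and obtains a \emph{multiplicative} inequality of the form $d\Phi(Y)\le C\,\Phi(Y)(1+\eta)\,dt+\text{(martingale)}$. Because the random process $\eta$ sits in front of $\Phi(Y)$, closing this estimate requires a stochastic Gr\"onwall lemma (the one quoted from \cite[Lemma~5.3]{GlattHoltzZiane2}) together with an additional localization $\rho_{S}=\inf\{t:\int_{0}^{t}\eta>S\}$. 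Your choice of $\log Y$ achieves the same linearization but places $\eta$ \emph{additively} in the drift and bounds the quadratic variation of the martingale part by $K_R^2\int_{0}^{T}\eta\,ds$ independently of $Y$; hence the classical Gr\"onwall inequality suffices and the extra localization in $\eta$ is unnecessary. The paper's $\Phi$ is tailored so that $\Phi'(x)\bigl(x(1+\ln x)+1\bigr)=\Phi(x)$, which is exactly the cancellation you obtain for free from $(\log Y)'=1/Y$; the two routes are therefore cousins, with yours being the more elementary. Two small points worth tightening in a final write-up: the It\^o correction $-\|Z\|_{L_2}^{2}/(2Y^{2})$ is nonpositive and can simply be dropped rather than bounded above by $(K_R^{2}/2)\eta^{2r}$; and in the final decomposition the events $\{\tau_R=\xi\}$ are increasing in $R$, so the countable union has probability $\lim_{R}\Prb(\tau_R=\xi\le T)=0$ and no summation is really needed.
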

\begin{proof}
As in \cite{FangZhang2005}, we introduce the functions
\begin{align}
\zeta (x) &= (1 + \ln x ) \notag \\ 
\Psi (x) &= \int_{0}^{x} \frac{1}{r \zeta(r) + 1} d r \notag \\ 
\Phi (x) &= \exp (\Psi (x) ).\label{eq:2D:phiR}
\end{align}
By direct computation we find that
\begin{align*}
\Phi'(x) = \frac{\Phi(x)}{x  \zeta(x) + 1}, \quad 
\Phi''(x) = - \frac{\Phi(x) \zeta(x)}{(x \zeta(x) +1)^{2}}.
\end{align*}
Thus, by an application of the It\={o} lemma, we have
\begin{align*}
d \Phi(Y) = \Phi'(Y)dY + \frac{1}{2}\Phi''(Y)dYdY
	=  \frac{\Phi(Y)}{Y  \zeta(Y) + 1} X dt
	- \frac{1}{2} \frac{\Phi(Y) \zeta(Y)}{(Y \zeta(Y) +1)^{2}} \|Z\|_{L_{2}}^{2} dt + \frac{\Phi(Y)}{Y  \zeta(Y) + 1}  Z {\dW}.
\end{align*}
For $S >0$ we define the stopping times 
$$
\zeta_{S} := \inf\{t \geq 0: Y(t) > S\}  \wedge \tau_{R},
\quad \rho_{S} :=  \inf\left\{ t \geq 0: \int_0^t \eta ds > S \right\}.
$$
In view of the definition of $\xi$, we have that $\lim\limits_{S \to \infty} \zeta_{S} = \tau_{R} \wedge \xi$.
Due to \eqref{eq:ExtraProcessToCauseMentalBreakdown} we also have that $\lim\limits_{S \to \infty} \rho_{S} = \infty$.
Fix $T, S_1, S_2 >0$. We estimate and any stopping times $0 \leq \tau_a \leq \tau_b \leq \zeta_{S_1} \wedge \rho_{S_2} \wedge T$
\begin{align*}
  \E \sup_{t \in [\tau_a,\tau_b]} \Phi(Y)
  \leq& \E\Phi(Y(\tau_a))  +
        \E \int_{\tau_a}^{\tau_b} \Phi(Y)  \left(  \frac{|X|}{Y  \zeta(Y) + 1}  +
	 \frac{1}{2}  \left| \frac{ \zeta(Y)\|Z\|_{L_{2}}^{2}  }{(Y \zeta(Y) +1)^{2}}  \right| \right)dt +
	\E \sup_{t \in [\tau_a,\tau_b]} \left|\int_{\tau_a}^{t}\frac{\Phi(Y)}{Y  \zeta(Y) + 1}  Z {\dW} \right|, \notag\\
  \leq& \E\Phi(Y(\tau_a))  +
       C \E \int_{\tau_a}^{\tau_b} \Phi(Y)  \left(  1 + \eta \right)dt +
	C\E \left(\int_{\tau_a}^{\tau_b} \left(\frac{\Phi(Y)}{Y  \zeta(Y) + 1}\right)^{2} \|Z\|_{L_{2}}^{2}  dt\right)^{1/2}, \notag\\	
  \leq& \E\Phi(Y(\tau_a))  +
       C \E \int_{\tau_a}^{\tau_b} \Phi(Y)  \left(  1 + \eta \right)dt +
	C\E \left(\int_{\tau_a}^{\tau_b} \Phi(Y)^{2} \eta  dt \right)^{1/2}, \notag\\	
  \leq& \E\Phi(Y(\tau_a))  +
       C \E \int_{\tau_a}^{\tau_b} \Phi(Y)  \left(  1 + \eta \right)dt +
        \frac{1}{2} \E \sup_{t \in [\tau_a,\tau_b]} \Phi(Y)
\end{align*}
where $C$, depends on $R$ through $K_{R}$ and is
is independent of $T$, $S_{1}$, $\xi$, $\tau_a$ and $\tau_b$.  Rearranging
and applying a stochastic version of the Gr\"{o}nwall Lemma given 
in \cite[Lemma 5.3]{GlattHoltzZiane2} we find
\begin{align*}
  \E \sup_{t \in [0,\sigma_{S_1} \wedge \rho_{S_2} \wedge T]} \Phi(Y) \leq C
\end{align*}
where here $C = C(R, T,S_2, M)$ and is independent of $S_1$ and $\xi$.
Thus, sending $S_1 \to \infty$ and applying the monotone convergence theorem, 
\begin{align}
  \E \sup_{t \in [0, \rho_{S_2} \wedge \tau_{R} \wedge T]} \Phi(Y) \leq C. 
  \label{eq:GronwallLambada}
\end{align}
Thus, by the properties of $\Phi$ (cf. \eqref{eq:2D:phiR}) we infer
$$
\sup_{t \in [0, \rho_{S_2} \wedge \tau_{R} \wedge T]} Y < \infty \; \textrm{ for each } R, S_{2} >0,
$$
on a set of full measure.
Thus, since $\lim\limits_{S_{2} \to \infty} \rho_{S_{2}} = \infty$
we infer that, for each
$R > 0$, $\sup_{t \in [0,\tau_{R} \wedge T]} Y < \infty$, almost surely.  In view of the
condition \eqref{eq:WeHitXiWithFiniteR} imposed on the stopping times $\tau_{R}$ this in turn implies
$\sup_{t \in [0,\xi \wedge T]} Y < \infty$.
Since $T$ was also arbitrary to begin with, we have perforce
$\xi = \infty$, almost surely.  The proof is therefore complete.
\end{proof}
\begin{remark}\label{rmk:InitialDataCrazyness}
In Lemma~\ref{thm:NonBlowupLogLipp} we may actually just
assume that $Y_{0}$ is finite almost surely.  Indeed if we define
the sets $\Omega_{M} := \{ Y_{0} \leq M\}$ we infer, arguing 
similarly to above that
$$
  \E\left( \indFn{\Omega_{M}}\sup_{t \in [0,\zeta_{S_1} \wedge \rho_{S_2} \wedge T]} \Phi(Y) \right)\leq C_{M}.
$$
We thus find that ${\xi = \infty}$ for almost every $\omega$ in $\cap_{M} \Omega_{M}$.  Since
this latter set is clearly of full measure, we may thus establish the proof of Lemma~\ref{thm:NonBlowupLogLipp} 
in this more general situation.
\end{remark}

\subsection*{Acknowledgments}

The work of NGH was partially supported by the National Science Foundation
under the grants NSF-DMS-1004638, NSF-DMS-0906440,
and by the Research Fund of Indiana University. VV is grateful to Peter Constantin for stimulating discussions,  to the Mathematics Department and the Institute for Scientific Computing and Applied Mathematics at Indiana University for their hospitality. VV was in part supported by an AMS-Simmons travel award.

\end{document}